\newtheorem*{thma}{Theorem~A}
\newtheorem*{thmb}{Theorem~B}
\newtheorem{theorem}{Theorem}[section]
\newtheorem{lemma}[theorem]{Lemma}
\newtheorem{proposition}[theorem]{Proposition}
\newtheorem{corollary}[theorem]{Corollary}
\newtheorem{claim}[theorem]{Claim}
\newtheorem{subclaim}[theorem]{Subclaim}
\newtheorem{fact}[theorem]{Fact}
\newtheorem{question}[theorem]{Question}
\newtheorem*{q1}{Question~1}
\newtheorem*{q2}{Question~2}
\theoremstyle{definition}
\newtheorem{definition}[theorem]{Definition}
\newtheorem{remark}[theorem]{Remark}
\newcommand{\cf}{\mathrm{cf}}
\newcommand{\dom}{\mathrm{dom}}
\newcommand{\supp}{\mathrm{supp}}
\newcommand{\bb}{\mathbb}
\newcommand{\otp}{\mathrm{otp}}
\newcommand{\lh}{\mathrm{lh}}
\newcommand{\mb}{\mathbf}
\newcommand{\mc}{\mathcal}
\newcommand{\sub}{\subseteq}
\newcommand{\ra}{\rightarrow}
\newcommand{\Et}{\bigwedge}
\newcommand{\ZFC}{\sf ZFC}
\newcommand{\CH}{\sf CH}
\newcommand{\Fn}{\mathrm{Fn}}
\newcommand{\wkd}{\mathrm{w}\diamondsuit}
\newcommand{\wl}{\mathrm{w} \lozenge}
\newcommand{\Lim}{\mathrm{Lim}}
\title{Simultaneously nonvanishing higher derived limits}
\author{Matteo Casarosa}
\address[Casarosa]{
Institut de Math\'ematiques de Jussieu - Paris Rive Gauche (IMJ-PRG)\\
Universit\'e Paris Cit\'e\\
B\^atiment Sophie Germain\\
8 Place Aur\'elie Nemours \\ 75013 Paris, France}
\address[Casarosa]{Dipartimento di Matematica, Universit\`{a} di Bologna, Piazza di
Porta S. Donato, 5, 40126 Bologna,\ Italy}
\address[Casarosa, current affiliation]{Departamento de Matem\'{a}ticas e Inform\'{a}tica, Universitat de Barcelona, Gran Via de les Corts Catalanes 585, 08007 Barcelona, Spain}
\email{matteo.casarosa@ub.edu}
\urladdr{https://sites.google.com/view/matteocasarosa}
\author{Chris Lambie-Hanson}
\address[Lambie-Hanson]{
Institute of Mathematics, 
Czech Academy of Sciences, 
{\v Z}itn{\'a} 25, 110 00 Prague 1, Czech Republic
}
\email{lambiehanson@math.cas.cz}
\urladdr{https://clambiehanson.github.io}
\thanks{The second author was supported by the Czech Academy of Sciences 
(RVO 67985840) and the GA\v{C}R project 23-04683S. Much of the research work took place during a visit of the first author supported by the Starting Grant 101077154 ``Definable Algebraic Topology" from the European Research Council, and partly by the GAČR project 23-04683S. The authors wish to thank Alessandro Vignati and Jeffrey Bergfalk for helpful conversations, and the anonymous referee for a number of helpful suggestions and corrections.}
\subjclass[2020]{03E35, 03E05, 03E17, 03E75, 18G10}
\keywords{derived limits, dominating number, weak diamond, square principles}
\begin{document}

\begin{abstract}
The derived functors $\lim^n$ of the inverse limit find many applications in algebra and topology. In particular, the vanishing of certain derived limits $\lim^n \mb{A}[H]$, parametrized by an abelian group $H$, has implications for strong homology and condensed mathematics. In this paper, we prove that if $\mathfrak{d}=\omega_n$, then $\lim^n \mb{A}[H] \neq 0$ holds for $H=\bb{Z}^{(\omega_n)}$ (i.e. the direct sum of $\omega_n$-many copies of $\bb{Z}$). The same holds for $H=\bb{Z}$ under the additional assumption that $\wkd(S^{k+1}_k)$ holds for all $k < n$. In particular, this shows that if $\lim^n \mb{A}[H] = 0$ holds for all $n \geq 1$ and all abelian groups $H$, then $2^{\aleph_0} \geq \aleph_{\omega+1}$, thus answering a question of Bannister. Finally, we prove some consistency results regarding simultaneous nonvanishing of derived limits, again in the case of $H = \bb{Z}$. In particular, we show the consistency, relative to \ZFC, of $\bigwedge_{2 \leq k < \omega} \lim^k \mb{A} \neq 0$.
\end{abstract}

\maketitle

\section{Introduction}

The past few years have seen considerable progress in the application of set-theoretic 
tools to the study of homological algebra, and in particular to the study of the derived 
functors of the inverse limit functor. This paper is a contribution to this line of research. 
In particular, we isolate a number of situations in which certain derived limits are 
provably nonzero.

The inverse systems that we are primarily concerned with here are of the form 
$\mb{A}_{\mc{I}}[H]$, where $\mc{I}$ is a $\subseteq$-directed collection of sets 
and $H$ is an arbitrary abelian group. These systems are indexed along the 
directed partial order $(\mc{I}, \subseteq)$.\footnote{We refer the reader to  
Subsection \ref{subsec: derived_limits} for precise definitions.} We are especially 
interested in the case in which $\mc{I} = \{I_f \mid f \in {^{\omega}\omega}\}$, where 
\[
  I_f = \{(k,m) \in \omega \times \omega \mid m < f(k)\}.
\]
We will omit $\mc{I}$ from the notation $\mb{A}_{\mc{I}}[H]$ when $\mc{I}$ has this value; 
similarly, we will omit $H$ from the notation when $H = \bb{Z}$.

The derived limits of these systems, and questions about their vanishing, 
show up in a variety of contexts in various fields 
of mathematical research. To give two prominent examples:
\begin{itemize}
  \item In \cite{mp}, Marde\v{s}i\'{c} and Prasolov prove that, if strong homology is additive 
  on the class of all closed subsets of Euclidean space, then $\lim^n \mb{A} = 0$ for all 
  $1 \leq n < \omega$.
  \item Clausen and Scholze show that the assertion that $\lim^n \mb{A}[H] = 0$ for all 
  $1 \leq n < \omega$ and all abelian groups $H$ is equivalent to a useful statement about 
  the calculation of derived functors in the category of condensed abelian groups (for a precise 
  statement, see \cite[Lecture 4]{analytic_stacks} or \cite{blh_condensed}).
\end{itemize}

In the late 1980s and early 1990s, a number of works were published applying set-theoretic tools 
to the study of the \emph{first} derived limit of the system $\mb{A}$ (cf.\ \cite{mp, dsv, kamo,
todcmpct}, and see the introduction of \cite{svhdl} for a summary of the contents of these works). 
It was not until roughly ten years ago that the \emph{higher} derived limits of $\mb{A}$ and its 
relatives began to be explored. This was begun in \cite{bergfalk} and has continued in a number of 
works published since then. We now survey some of the relevant recent highlights of this research 
program, beginning with results about the consistent \emph{vanishing} of higher derived limits.
\begin{itemize}
  \item In \cite{svhdl}, Bergfalk and Lambie-Hanson prove that, relative to the consistency of 
  the existence of certain large cardinals, the statement ``for all $1 \leq n < \omega$, 
  $\lim^n \mb{A} = 0$" is consistent with $\ZFC$. In particular, they prove that, in any 
  forcing extension obtained by adding weakly compact-many Hechler reals, 
  $\lim^n \mb{A} = 0$ for all $1 \leq n < \omega$.
  \item In \cite{svhdlwlc}, Bergfalk, Hru\v{s}\'{a}k, and Lambie-Hanson remove the large 
  cardinal assumptions from the main result of \cite{svhdl}. In particular, they prove that, 
  in any forcing extension obtained by adding $\beth_\omega$-many Cohen reals, 
  $\lim^n \mb{A} = 0$ for all $1 \leq n < \omega$.
  \item In \cite{bannister}, Bannister sharpens the arguments from \cite{svhdl} and 
  \cite{svhdlwlc} to build models where derived limits vanish simultaneously for a broader class of systems. In particular, it follows from his work that, in the model of \cite{svhdlwlc}, we 
  in fact have $\lim^n \mb{A}[H] = 0$ for all $1 \leq n < \omega$ and all abelian groups 
  $H$.
\end{itemize}
In the other direction, there have been some recent results about the consistent 
\emph{nonvanishing} of the derived limits of $\mb{A}$.
\begin{itemize}
  \item In \cite{velickovic2021non}, Veli\v{c}kovi\'{c} and Vignati prove that, for each 
  $1 \leq n < \omega$, it is consistent with $\ZFC$ that $\lim^n \mb{A} \neq 0$. In particular, 
  they prove that, if $\mathfrak{b} = \mathfrak{d} = \aleph_n$ and $\wkd(S^{k+1}_k)$ holds for 
  all $k < n$, then $\lim^n \mb{A} \neq 0$.\footnote{$\mathfrak{b}$ and $\mathfrak{d}$ are the 
  \emph{bounding} and \emph{dominating} numbers, respectively. See definition \ref{def: wkd} 
  for the definition of $\wkd(S^{k+1}_k)$.}
  \item In \cite{casarosa2024nonvanishingderivedlimitsscales}, Casarosa shows that the 
  assumption of $\mathfrak{b} = \mathfrak{d}$ is not necessary in the main result from 
  \cite{velickovic2021non}. In particular, he proves that, for all $1 \leq k \leq n < \omega$, 
  it is consistent that $\mathfrak{b} = \aleph_k$, $\mathfrak{d} = \aleph_n$, and 
  $\lim^n \mb{A} \neq 0$.
\end{itemize}

A number of questions remained open in the wake of the work described above. Let us now recall 
two prominent such questions, which together comprise the primary motivation behind this paper. 

\begin{q1}
  What is the minimum value of $2^{\aleph_0}$ compatible with the statement ``for all $1 \leq n 
  < \omega$, $\lim^n \mb{A} = 0$"? What is the minimum value of $2^{\aleph_0}$ compatible with 
  the statement ``for all $1 \leq n < \omega$ and all abelian groups $H$, $\lim^n \mb{A}[H] = 0$"?
\end{q1}

The first half of Question 1 was asked in \cite{svhdl}, and also
in \cite{svhdlwlc} and \cite{bannister}. The second half of Question 1 is closely related to 
Bannister's \cite[Question 7.7]{bannister}, which asks an analogous question about a slighly broader class 
of inverse systems. The first half of the question remains open; we provide here an answer to 
the second half and, in the process, to \cite[Question 7.7]{bannister}. In particular, we 
prove the following theorem.

\begin{thma}
  Suppose that $1 \leq n < \omega$ and $\mathfrak{d} = \omega_n$. Then
  \begin{enumerate}
    \item $\lim^n \mb{A}[\bb{Z}^{(\omega_n)}] \neq 0$, where $\bb{Z}^{(\omega_n)}$ denotes 
    the direct sum of $\omega_n$-many copies of $\bb{Z}$;
    \item if, in addition, $\wkd(S^{k+1}_k)$ holds for all $k < n$, then $\lim^n \mb{A} \neq 0$.\footnote{As was recently shown by Bannister in \cite{bannister_nonvanishing}, 
    the assumption of $\wkd(S^1_0)$ is in fact superfluous here; it is enough to assume 
    $\wkd(S^{k+1}_k)$ for all $1 \leq k < n$.}
  \end{enumerate}
  In particular, if $\lim^n \mb{A}[H] = 0$ for all $1 \leq n < \omega$ and all abelian groups $H$, 
  then $2^{\aleph_0} \geq \aleph_{\omega+1}$.
\end{thma}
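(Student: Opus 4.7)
The plan is to construct, under $\mathfrak{d} = \omega_n$, an explicit nontrivial $n$-cocycle $\Phi$ in the cochain complex computing $\lim^n \mb{A}[\bb{Z}^{(\omega_n)}]$, using a dominating scale as scaffolding. First I fix a dominating family $\{f_\alpha \mid \alpha < \omega_n\} \sub \omega^\omega$. Then, for each increasing tuple $\vec\alpha = (\alpha_0, \dots, \alpha_n) \in [\omega_n]^{n+1}$, I define $\phi_{\vec\alpha}$ on an appropriate common refinement of the $I_{f_{\alpha_i}}$ by a simple formula that records, via the generator $e_{\alpha_n}$ of $\bb{Z}^{(\omega_n)}$, the top index of $\vec\alpha$ on the portion of the domain where $f_{\alpha_n}$ dominates the other $f_{\alpha_i}$. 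The cocycle condition modulo finite should then follow from a telescoping cancellation controlled by the eventual dominance pattern among the $f_{\alpha_i}$.

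The main obstacle is proving nontriviality of $\Phi$. Assume for contradiction that $\Phi = \delta\Psi$ modulo finite for some $(n-1)$-cochain $\Psi$ with values in $\bb{Z}^{(\omega_n)} = \bigoplus_{\alpha < \omega_n} \bb{Z} e_\alpha$. Since each $\psi_{\vec\beta}$ has finite support among the generators $e_\alpha$, for every $\alpha < \omega_n$ the coordinate projection $\pi_\alpha$ yields a $\bb{Z}$-valued cochain $\pi_\alpha(\Psi)$ witnessing $\delta \pi_\alpha(\Psi) = \pi_\alpha(\Phi)$ modulo finite, and by construction $\pi_\alpha(\Phi)$ is concentrated on those tuples with top coordinate equal to $\alpha$. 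The plan is to derive the contradiction from the tension between these two facts: a pressing-down / cofinality argument on $\omega_n$, applied to the finite supports $\supp(\psi_{\vec\beta}) \sub \omega_n$, together with an induction on $n$ in the spirit of the Marde\v{s}i\'{c}--Prasolov lower bound, should force an incompatibility between the simultaneous trivializations $\pi_\alpha(\Psi)$ and the diagonal structure of $\Phi$. This is precisely where the richness of $\bb{Z}^{(\omega_n)}$ replaces the $\wkd$-type guessing used in earlier work: the generators $e_\alpha$ index the "correctness witnesses'' that $\wkd$ had previously been required to guess.

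For part (2), granting $\wkd(S^{k+1}_k)$ for all $k < n$, I would adapt the $\bb{Z}$-valued constructions of Veli\v{c}kovi\'{c}--Vignati and Casarosa, using the weak-diamond guessing level by level to pre-select a single coordinate $\alpha$ along which $\pi_\alpha(\Phi)$ remains nontrivializable — in effect running the proof of (1) with $\bb{Z}$-coefficients by guessing in advance the finite supports that a hypothetical trivialization would use. Finally, the consequence for the continuum is formal: if $\lim^n \mb{A}[H] = 0$ for all $n \geq 1$ and all abelian $H$, then part (1) rules out $\mathfrak{d} = \omega_n$ for every $n \geq 1$; since $\ZFC$ proves $\cf(\mathfrak{d}) > \omega$, the value $\mathfrak{d} = \aleph_\omega$ is also excluded, so $2^{\aleph_0} \geq \mathfrak{d} \geq \aleph_{\omega+1}$.
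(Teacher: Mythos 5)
Your high-level idea for nontriviality — project a hypothetical $\bb{Z}^{(\omega_n)}$-valued trivialization onto coordinates, use finiteness of supports and a cofinality argument to localize, and run an induction on $n$ — is in the right spirit and broadly matches the paper's nontriviality argument. The ``in particular'' deduction at the end (using $\cf(\mathfrak{d}) > \omega$) is correct and is the same as the paper's. However, there are two genuine gaps that make the core of the proposal fail as written.

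First, you fix a dominating family $\{f_\alpha \mid \alpha < \omega_n\}$ and then freely reason about ``the portion of the domain where $f_{\alpha_n}$ dominates the other $f_{\alpha_i}$,'' which presupposes that your family is $\leq^*$-increasing, i.e., is a scale. A scale of length $\omega_n$ only exists when $\mathfrak{b} = \mathfrak{d} = \omega_n$. The hypothesis of Theorem~A is just $\mathfrak{d} = \omega_n$; when $\mathfrak{b} < \mathfrak{d}$, no $\leq^*$-increasing dominating sequence exists, and your construction has no scaffolding to stand on. Handling exactly this situation is the central technical contribution of the paper: it replaces $\subseteq^*$-increasing (equivalently $\leq^*$-increasing) sequences by the weaker notion of an \emph{ascending sequence} (Definition~\ref{ascending_def}), and then shows (via the cardinal invariants $\mathrm{cof}^*$ and $\mathrm{non}^*$ of $\emptyset \times \mathrm{Fin}$, both equal to $\mathfrak{d}$) that $\mathfrak{d} = \omega_n$ alone suffices to produce a $\subseteq^*$-cofinal ascending sequence of length $\omega_n$. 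Without some substitute for this, your proof proves at best the $\mathfrak{b}=\mathfrak{d}$ case, which is essentially already in \cite{velickovic2021non} and \cite{casarosa2024nonvanishingderivedlimitsscales}.

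Second, even granting a scale, the ``simple formula that records, via $e_{\alpha_n}$, the top index of $\vec\alpha$'' cannot produce a coherent family, and no closed-form expression of this kind is used in the paper. The obstruction is already visible at $n=1$: if $\varphi_\alpha$ takes the value $e_\alpha$ on an infinite portion of $I_{f_\alpha}$, then for $\alpha \neq \beta$ the difference $\varphi_\alpha - \varphi_\beta$ will be infinitely supported on $I_{f_\alpha} \cap I_{f_\beta}$, violating coherence, unless the supports are carefully arranged to be almost disjoint from the earlier sets. The paper achieves this by a genuine recursion (Theorem~\ref{thm: ascending_wide}): at each stage $\gamma$ it first invokes the Goblot-type vanishing result (Proposition~\ref{prop: goblot_var}) to get a trivialization of the restriction, and then, at stages $\gamma$ of cofinality $\omega_{n-1}$, perturbs that trivialization by adding a ``shifted'' nontrivial coherent $(n-1)$-family living in coordinates above $\gamma$ and supported on a set chosen to meet each earlier $x_\alpha$ finitely. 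Both the coherence and the nontriviality depend essentially on maintaining inductive invariants along the recursion (e.g., that $\varphi_b$ maps into $H^{(\max(b)+\omega_{n-1})}$), and the localization of a putative trivialization to $H^{(\gamma)}$ at some $\gamma$ of cofinality $\omega_{n-1}$, after which the shifted nontrivial $(n-1)$-family yields the contradiction. Your ``telescoping cancellation controlled by the eventual dominance pattern'' does not deliver this; it is the wrong kind of mechanism for coherence here.

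For part~(2), the proposal is likewise missing the actual engine. The paper does not ``pre-select a coordinate $\alpha$''; rather it runs a separate, more delicate recursion (Theorem~\ref{thm: ascending}) in which, at each stage, \emph{two} candidate extensions are constructed (one perturbed by a nontrivial $(n-1)$-family, one not), and $\wkd(S^{n}_{n-1})$ is used to guess the restriction of a hypothetical trivialization so that the extension can always be chosen to defeat it; the proof that this guessing is well-defined requires Claim~\ref{claim: unique_triv} and Propositions~\ref{nonequiv_prop} and~\ref{nonextension_prop}. This diagonalization-against-trivializations argument is qualitatively different from projecting onto coordinates, and you would need to reconstruct it rather than gesture at it.

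In short: your proposal correctly identifies the target object (a nontrivial coherent $n$-family with values in $\bb{Z}^{(\omega_n)}$) and has the right instinct about using finite supports and cofinality in the nontriviality argument, but it (i) assumes a scale where none is available, thereby missing the paper's main innovation (ascending sequences, and the proof that $\mathfrak{d}=\mathrm{cof}^*=\mathrm{non}^*$ of $\emptyset\times\mathrm{Fin}$ supplies them), and (ii) replaces the necessary recursive construction by a heuristic formula that cannot satisfy coherence.
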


\cite[Question 7.7]{bannister} asks about the minimum value of the continuum compatible with 
additivity of derived limits for $\Omega_\omega$-systems. We refer the reader to 
\cite{bannister} for the relevant definitions; we just note here that a statement of the form 
$\lim^n \mb{A}[H] \neq 0$ provides a counterexample to the additivity of derived limits for 
$\Omega_\omega$-systems, and Bannister shows in \cite{bannister} that additivity of derived limits 
for $\Omega_\omega$-systems is compatible with $2^{\aleph_0} = \aleph_{\omega+1}$. Therefore, 
Theorem A provides an answer to \cite[Question 7.7]{bannister}. 

We now turn to the second question motivating this paper.

\begin{q2}
  Let $X$ be an arbitrary set of positive integers. Is there a model of $\ZFC$ in which 
  $\lim^n \mb{A} = 0$ if and only if $n \in X$? In particular, is the statement ``for all 
  $1 \leq n < \omega$, $\lim^n \mb{A} \neq 0$" consistent with $\ZFC$?
\end{q2}

Question 2 was asked in \cite{svhdlwlc}, and we make some partial progress towards it here. 
Notably, prior to the present paper, there was only one instance in which the simultaneous 
\emph{nonvanishing} of $\lim^n \mb{A}$ for multiple values of $1 \leq n < \omega$ was known to 
be consistent. Namely, it follows from results in \cite{todcmpct} and \cite{bergfalk} that it 
is consistent with $\ZFC$ that $\lim^1 \mb{A} \neq 0$ and $\lim^2 \mb{A} \neq 0$ simultaneously 
(in the model witnessing this, $\lim^n \mb{A} = 0$ for all $3 \leq n < \omega$). Here, we construct 
models witnessing the consistency of the simultaneous nonvanishing of $\lim^n \mb{A}$. In particular, 
we prove the following theorem.

\begin{thmb}
  \begin{enumerate}
    \item Fix $2 \leq n < \omega$. There exists a model of $\ZFC$ in which $\mathfrak{b} = 
    \mathfrak{d} = \aleph_n$ and $\bigwedge_{2 \leq k \leq n} \lim^k \mb{A} \neq 0$.
    \item There exists a model of $\ZFC$ in which $\bigwedge_{2 \leq k < \omega} \lim^k \mb{A} 
    \neq 0$.
  \end{enumerate}
\end{thmb}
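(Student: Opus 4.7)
The strategy for both parts is to construct, for each relevant $k \geq 2$, a nontrivial $k$-coherent family of height $\omega_k$ witnessing $\lim^k \mathbf{A} \neq 0$, extracting each such family at an appropriate intermediate stage of a single finite support iteration of Hechler forcing, and then arguing that every such family survives the remainder of the iteration. The families at every desired height must then coexist in a single final extension.

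For part (1), start from a ground model $V \models \GCH$, let $\mathbb{P}$ be a length-$\omega_n$ finite support iteration of Hechler forcing, and let $G$ be $\mathbb{P}$-generic. Standard arguments give $\mathfrak{b} = \mathfrak{d} = \aleph_n$ in $V[G]$ and preserve $\wkd(S^{k+1}_k)$ for every $k < n$, so Theorem~A(2) gives $\lim^n \mathbf{A} \neq 0$. For each $k \in \{2, \ldots, n-1\}$, the intermediate model $V_k := V^{\mathbb{P}_{\omega_k}}$ satisfies $\mathfrak{b} = \mathfrak{d} = \aleph_k$ together with the relevant weak diamonds, so by Veli\v{c}kovi\'{c}--Vignati it already contains a nontrivial $k$-coherent family $\Phi^{(k)}$ of height $\omega_k$. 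The core technical task is a preservation lemma asserting that the tail iteration $\mathbb{P}/G_{\omega_k}$ (a length-$\omega_n$ finite support iteration of Hechler forcings over $V_k$) adds no trivialization of $\Phi^{(k)}$. I would try to establish this by analyzing a putative trivialization through ccc names and exploiting the $\sigma$-centered structure of Hechler forcing, reducing any such trivialization to one already present in $V_k$ and contradicting the choice of $\Phi^{(k)}$. This preservation argument is the main obstacle: existing analyses of Hechler iterations in the literature are tuned to establish (non)vanishing at a single level, and one must develop combinatorial tools that track the survival of coherent families across arbitrarily long tails of such iterations.

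For part (2), carry out the analogous construction with $\mathbb{P}$ replaced by a finite support Hechler iteration of length $\aleph_{\omega+1}$ (or some suitable $\kappa \geq \aleph_\omega$), arranged so that at each intermediate stage $\omega_k$ the hypotheses of Veli\v{c}kovi\'{c}--Vignati hold in $V_k$ and the corresponding $\Phi^{(k)}$ can be extracted. A uniform version of the preservation lemma, now applied to tails of length up to $\aleph_{\omega+1}$, guarantees that every $\Phi^{(k)}$ survives to the final extension, yielding $\lim^k \mathbf{A} \neq 0$ for all $2 \leq k < \omega$ simultaneously. The resulting model has $\mathfrak{d} = \aleph_{\omega+1}$, which is consistent with the lower bound on the continuum implied by Theorem~A.
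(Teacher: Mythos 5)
Your proposal takes a genuinely different route from the paper, but it has a structural gap that the paper's actual argument is specifically designed to avoid. You propose to construct, at an intermediate stage $V_k = V^{\mathbb{P}_{\omega_k}}$, a nontrivial coherent $k$-family $\Phi^{(k)}$ indexed along an $\omega_k$-chain, and then to preserve it through the tail of the iteration. Even granting the (unproven) preservation lemma, this does not yield $\lim^k \mathbf{A} \neq 0$ in the final model: an $\omega_k$-chain from stage $\omega_k$ is no longer $\subseteq^*$-cofinal in $\{I_f \mid f \in {}^\omega\omega\}$ once $\mathfrak{d}$ has been pushed up to $\omega_n$ (or higher). Propositions \ref{prop: cofinal_subset} and \ref{prop: extension} only transfer nontriviality between a cofinal subfamily and the whole system; a nontrivial family on a non-cofinal chain can perfectly well extend to a trivial one, and indeed by Goblot's argument (Proposition \ref{prop: goblot_var}) the restriction to any subchain of order type $<\omega_k$ must trivialize. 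So the real difficulty is not ``does $\Phi^{(k)}$ stay nontrivial on its original chain'' but ``can we produce a nontrivial coherent $k$-family along a chain of length $\omega_n$ (respectively $\geq\aleph_{\omega+1}$)''.

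This is precisely what the paper's stepping-up lemma (Lemma \ref{lemma_stepup}) accomplishes: starting from nontrivial coherent $(k-1)$-families on $\kappa$-chains and assuming $\square(\lambda, S) + \wkd(S)$ for some stationary $S \subseteq S^\lambda_\kappa$, it produces nontrivial coherent $k$-families indexed over $\lambda$-chains. Applied with $\lambda = \omega_n$ for each $2 \leq k \leq n$ (Lemma \ref{lemma_nontrivialchain}), this puts all the desired $k$-families simultaneously on a single cofinal $\omega_n$-chain, and then Fact \ref{fact: lim_coh} does the rest. No preservation through tails of the Hechler iteration is ever needed. The paper also forces with a \emph{product} $\mathbb{H}_{\omega_n} \times \mathbb{C}_1 \times \cdots \times \mathbb{C}_n$ (rather than a pure Hechler iteration) to guarantee the weak diamonds and square sequences hold together in the final model; your plan starts from $\GCH$, but after collapsing the continuum to $\aleph_n$ the arithmetic conditions behind Devlin--Shelah's weak diamond (e.g.\ $2^{\aleph_k} < 2^{\aleph_{k+1}}$) generally fail for $k+1 < n$, so you would need the Cohen-subset factors too.

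One more concrete point: for clause (2) you claim $\mathfrak{d} = \aleph_{\omega+1}$, but the paper explicitly achieves only $\mathfrak{d} = \aleph_{\omega+2}$ and records $\aleph_{\omega+1}$ as open. The obstruction is in arranging, for every $k < \omega$, a stationary $S \subseteq S^\lambda_{\omega_k}$ with both $\square(\lambda, S)$ and $\wkd(S)$ in the final model; the construction in Lemma \ref{lemma:long-chain-consistency} needs $\lambda$ to leave room for an $\omega_{\omega+2}$-closed factor $\mathbb{C}_\omega$, and it is not clear how to do this at $\aleph_{\omega+1}$. Asserting $\mathfrak{d} = \aleph_{\omega+1}$ without addressing this would need justification.
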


Note that, by a theorem of Goblot \cite{goblot}, if $1 \leq n < \omega$ and  
$\mathfrak{d} = \aleph_n$, then $\lim^k \mb{A} = 0$ for all $k > n$. Thus, clause (1) 
of Theorem B is in one sense optimal. In our model for clause (2) of Theorem B, we 
have $\mathfrak{b} = \mathfrak{d} = \aleph_{\omega + 2}$. By the aforementioned result of 
Goblot, $\bigwedge_{2 \leq k < \omega} \lim^k \mb{A} \neq 0$ implies that 
$\mathfrak{d}$ is at least $\aleph_{\omega+1}$. Our result is thus almost optimal; it remains 
open whether one can obtain the same conclusion with $\mathfrak{d} = \aleph_{\omega+1}$. 
Also, the models that we construct in proving Theorem B will satisfy $\lim^1 \mb{A} = 0$. 
We therefore fall just short of answering the ``in particular" clause of Question 2, which 
remains open.

We now briefly discuss some of the methods underlying our results and the structure of 
the paper. As has become standard in this area of research, given $1 \leq n < \omega$, our 
verifications that a derived limit $\lim^n \mb{A}_{\mc{I}}[H]$ is nonzero in a particular 
model will reduce to the construction of a combinatorial object known as a 
\emph{nontrivial coherent $n$-family}. In Section \ref{sec: coherence}, we recall some 
of the basic definitions and facts surrounding nontrivial coherent $n$-families and 
their connections with derived limits.

Our proof of Theorem A builds upon the work of Veli\v{c}kovi\'{c} and Vignati in 
\cite{velickovic2021non}. In that paper, the assumption that $\mathfrak{b} = \mathfrak{d}$ 
yields the existence of a $\subseteq^*$-increasing cofinal sequence from 
$\{I_f \mid f \in {^\omega}\omega\}$ along which to perform recursive constructions of 
objects witnessing instances of $\lim^n \mb{A} \neq 0$. In the absence of the assumption 
that $\mathfrak{b} = \mathfrak{d}$, such sequences need not exist. As shown in \cite{casarosa2024nonvanishingderivedlimitsscales}, the weaker assumption of the existence of an unbounded chain is sufficient. To generalize these results even further, 
in Section \ref{sec: ascending} we introduce the notion of an \emph{ascending sequence} of sets. 
``Ascending" is a weakening of ``$\sub^*$-increasing", but we prove that it is strong enough to allow 
one to carry out modified versions of the constructions from \cite{velickovic2021non}.

In Section \ref{sec: ideals} we apply the technical results of Section \ref{sec: ascending} 
to obtain nonvanishing results for derived limits. In particular, we isolate natural conditions on 
an ideal $\mc{I}$ that imply the existence of an ascending $\sub^*$-cofinal sequence of elements 
of $\mc{I}$ to which one can apply the methods developed in Section \ref{sec: ascending}. 
We then apply these ideas to the specific ideal $\emptyset \times \mathrm{Fin}$, which is the 
ideal generated by the sets $\{I_f \mid f \in {^{\omega}}\omega\}$, to obtain a proof of 
Theorem A.

Finally, Section \ref{sec: squares} contains our proof of Theorem B. This proof again builds 
on the techniques of \cite{velickovic2021non}, combining them with the essential use 
of certain square sequences to enable the recursive construction of nontrivial coherent 
$n$-families of length greater than $\omega_n$. The heart of the section is a technical 
stepping-up lemma allowing us to use an appropriate combination of square principles and 
weak diamonds to construct nontrivial coherent $(n+1)$-families out of 
shorter nontrivial coherent $n$-families. This lemma is then applied in various forcing 
extensions to yield a proof of Theorem B.

\subsection{Notation and conventions}

Our notation is for the most part standard. For undefined notions in set theory, 
we refer the reader to \cite{jech}, and in homological algebra, to \cite{weibel}.

We identify an ordinal with the set of all ordinals strictly less than it. 
In particular, we identify the natural number $n$ with the set 
$\{0, 1, \ldots, n-1\}$. If $C$ is a set of ordinals, then we let $\Lim(C)$ denote 
the set $\{\alpha \in C \setminus \{0\} \mid \sup(C \cap \alpha) = \alpha\}$.
If $a$ and $b$ are sets of ordinals, then we let $a < b$ denote the assertion 
$(\forall \alpha \in a)(\forall \beta \in b)(\alpha < \beta)$. In particular, 
for any set $a$ of ordinals, we have $\emptyset < a$ and $a < \emptyset$, so this is 
only a partial order when restricted to nonempty sets.

If $\alpha$ is an ordinal, then $\cf(\alpha)$ denotes its cofinality. 
If $\lambda$ is a regular infinite cardinal and $\delta$ is an ordinal, then 
$S^\delta_\lambda = \{\alpha < \delta \mid \cf(\alpha) = \lambda\}$. If 
$m < n < \omega$, then we let $S^n_m$ denote $S^{\omega_n}_{\omega_m}$.

Given a positive integer $n$, we will identify 
functions with domain $n$ and sequences of length $n$, i.e., we will make 
no distinction between a function $u$ with domain $n$ and the sequence 
$\langle u(0), u(1), \ldots, u(n-1) \rangle$. Given such a $u$ and a natural 
number $i < n$, we let $u^i$ denote the sequence of length $n-1$ formed by 
removing $u(i)$ from $u$. This is often denoted 
\[
  \langle u(0), u(1), \ldots, \widehat{u(i)}, \ldots, u(n-1) \rangle;
\]
formally, it is the function $u^i$ with domain $n-1$ defined by 
\begin{align*}
  u^i(j) = \begin{cases}
    u(j) & \text{if } j < i \\
    u(j+1) & \text{if } j \geq i
  \end{cases}
\end{align*}
for all $j < n-1$. If $n < \omega$ and $\sigma \colon n \ra n$ is a permutation, then 
$\mathrm{sgn}(\sigma) \in \{-,+\}$ denotes the parity of $\sigma$.

Given a class $X$ and a cardinal $\kappa$, $[X]^\kappa$ denotes the 
class of all subsets of $X$ of cardinality $\kappa$.
Given a set of ordinals $a$, we let $\mathrm{otp}(a)$ denote its order-type.
We will customarily identify sets of ordinals with the functions 
enumerating them in increasing order, i.e., if $a \subseteq \mathrm{Ord}$ 
has order-type $\delta$ then, for all $i < \delta$, we let
$a(i)$ denotes the unique 
$\beta \in a$ such that $\otp(a \cap \beta) = i$. We will also regularly 
identify a sequence of length $1$ with its unique value, e.g., we will 
write $\varphi_\alpha$ instead of $\varphi_{\langle \alpha \rangle}$.
Similarly, we may write, e.g., $\varphi_{\alpha\beta}$ instead of 
$\varphi_{\langle \alpha, \beta \rangle}$. If $\sigma$ is a sequence 
indexed by an ordinal, then $\lh(\sigma)$ denotes its domain. If $\alpha < \beta$
are ordinals and $\sigma$ and $\tau$ are sequences of length $\alpha$ and 
$\beta$, respectively, then $\sigma \sqsubseteq \tau$ denotes the assertion 
that $\tau \restriction \alpha = \sigma$.

Given two functions $\varphi_0$ and $\varphi_1$, we write 
$\varphi_0 =^* \varphi_1$ to denote the assertion that, on their common 
domains, $\varphi_0$ and $\varphi_1$ agree at all but finitely many places. 
More formally, this is the assertion that the set 
\[
  \{x \in \dom(\varphi_0) \cap \dom(\varphi_1) \mid \varphi_0(x) \neq 
  \varphi_1(x)\}
\] 
is finite. As a special case, if $j$ is an element of the codomain of $\varphi$, 
then $\varphi =^* j$ is the assertion that $\{x \in \dom(\varphi_0) 
\mid \varphi(x) \neq j\}$ is finite. Similarly, if $u$ and $v$ are two sets, 
then we let $u \subseteq^* v$ denote the assertion that $u \setminus v$ is 
finite. If $\varphi$ is a function from a set $x$ into an abelian group $A$, 
then the \emph{support} of $\varphi$, denoted $\supp(\varphi)$, is 
$\{i \in x \mid \varphi(i) \neq 0\}$. If $f$ is a function and $X \subseteq \dom(f)$, 
then $f[X]$ denotes the pointwise image of $X$ under $f$. Similarly, if $X$ is a subset of the 
codomain of $f$, then $f^{-1}[X]$ denotes the preimage of $X$ under $f$. If 
$y$ is an element of the codomain of $f$, we will write $f^{-1}\{y\}$ instead of 
$f^{-1}[\{y\}]$.

We let $\mathsf{Ab}$ denote the category of abelian groups. 
We will be interested in functions mapping into abelian groups. For a set 
$u$ and an abelian group $H$, the set of functions from $u$ to $H$ itself has 
a natural abelian group structure defined by pointwise addition. For improved 
readability, we will slightly abuse notation in the following way: given sets 
$u_0$ and $u_1$, an abelian group $H$, and functions $\varphi_i \colon u_i \ra H$ 
for $i < 2$, we will write $\varphi_0 + \varphi_1$ instead of the more precise 
$\varphi_0 \restriction (u_0 \cap u_1) + \varphi_1 \restriction (u_0 \cap 
u_1)$. This generalizes straightforwardly to longer finite sums: if 
$n$ is a positive integer and $\langle \varphi_i \mid i < n \rangle$ is 
a family of functions mapping into $H$, then by convention the domain of the function 
$\sum_{i < n} \varphi_i$ is $\bigcap_{i < n} \dom(\varphi_i)$.

\section{Coherence, triviality, and derived limits} \label{sec: coherence}

In this section, we introduce the primary objects of study of this paper and review 
some of their basic properties.

\subsection{Nontrivial coherent families}

\begin{definition}
  Suppose that $\mc{I}$ is a collection of sets, $n$ is a positive integer, and 
  $H$ is an abelian group. An \emph{$H$-valued $n$-family indexed along 
  $\mc{I}$} is a family of functions of the form 
  \[
    \Phi = \left \langle \varphi_u \colon \bigcap_{i < n} u(i) \ra H ~ \middle| ~ u \in 
    \mc{I}^n \right \rangle.
  \]
  If $H$ and $\mc{I}$ are either clear from context or irrelevant, then we may refer 
  to such an object simply as an \emph{$n$-family}. An $n$-family is said to be
  \begin{itemize}
    \item \emph{alternating} if, for every $u \in \mc{I}^n$ and every permutation 
    $\sigma \colon n \ra n$, we have 
    \[
      \varphi_{u \circ \sigma} = \mathrm{sgn}(\sigma) \varphi_u;
    \]
    \item \emph{coherent} if it is alternating and, for all $v \in \mc{I}^{n+1}$, 
    we have 
    \[
      \sum_{i < n+1} (-1)^i \varphi_{v^i} =^* 0;
    \]
    \item \emph{trivial} if it is alternating and
    \begin{itemize}
      \item $n = 1$ and there is a function $\psi \colon \bigcup \mc I \ra H$ 
      such that, for all $u \in \mc{I}$, we have $\psi \restriction u 
      =^* \varphi_u$; or
      \item $n > 1$ and there is an alternating $(n-1)$-family 
      \[
        \Psi = \left\langle \psi_t \colon \bigcap_{i < n-1} t(i) \ra H \ \middle| \ t \in 
        \mc{I}^{n-1} \right\rangle
      \] 
      such that, for all $u \in \mc{I}^n$, we have
      \[
        \varphi_u =^* \sum_{i < n} (-1)^i \psi_{u^i}.
      \]
    \end{itemize}
    In this case, we say that $\psi$ or $\Psi$ \emph{trivializes} $\Phi$.
  \end{itemize}
\end{definition}

\begin{remark} \label{remark: coherent_trivial}
  It is easily verified that a trivial $n$-family is always coherent. 
  The question of when the converse holds is the primary motivating 
  question of this work.
  
  Moreover, if an $n$-family $\langle \varphi_u \mid u \in 
  \mc{I}^n \rangle$ is alternating, then, for every non-injective 
  $u \in \mc{I}^n$ we must have $\varphi_u = 0$. Therefore, when 
  constructing alternating $n$-families indexed along $\mc{I}$, it suffices 
  to specify $\varphi_u$ for injective $u \in \mc{I}^n$. 
  In practice, we will often be working with sets $\mc{I}$ equipped with 
  a natural linear order $<_{\mc{I}}$. In this setting, 
  we will let $[\mc{I}]^n$ denote the set of all $u \in \mc{I}^n$ such that 
  $u$ is strictly $<_{\mc{I}}$-increasing. Note then that,
  when constructing an alternating $n$-family indexed along $\mc{I}$, 
  it suffices to specify $\varphi_u$ for functions $u \in [\mc{I}]^n$, since 
  the requirement that the family be alternating will then determine all 
  other values. Similarly, when verifying that such a family is 
  either coherent or trivial, it suffices to consider 
  functions $v \in [\mc{I}]^{n+1}$ or $u \in [\mc{I}]^n$, respectively. When 
  working with an alternating $n$-family $\Phi = \langle \varphi_u \mid u \in 
  \mc{I}^n \rangle$ and a linear ordering $<_{\mc{I}}$ of $\mc{I}$, we will often 
  think of $\Phi$ as $\langle \varphi_u \mid u \in [\mc{I}]^n \rangle$, since this 
  is more economical and involves no loss of information. If $\mc{I}$ is enumerated 
  as $\langle x_\alpha \mid \alpha < \delta \rangle$, then, for ease of notation, when 
  given an alternating $n$-family $\Phi = \langle \varphi_u \colon
  \bigcap_{i < n} u(i) \ra H \mid u \in \mc{I}^n \rangle$, we will often rename it 
  as $\langle \varphi'_b \mid b \in [\delta]^n \rangle$ (or simply 
  $\langle \varphi_b \mid b \in [\delta]^n \rangle$ if there is no risk of confusion) where, 
  given $b \in [\delta]^n$, we set $\varphi'_b = \varphi_{\langle x_{b(i)} \mid 
  i < n \rangle}$. If we want to then show that, say, $\Phi$ is trivial, 
  in the case in which $n > 1$ it would then suffice to construct a family 
  $\langle \psi_a \mid a \in [\delta]^{n-1} \rangle$ such that, for all $b \in [\delta]^n$, 
  we have $\varphi_b =^* \sum_{i < n} (-1)^i \psi_{b^i}$.
  We will also sometimes discuss (alternating) $n$-families indexed along a sequence 
  $\vec{x} = \langle x_\alpha \mid \alpha < \delta \rangle$ of sets (note that there may 
  be $\alpha < \beta < \delta$ for which $x_\alpha = x_\beta$). In this setting, we
  mean families of the form
  \[
    \Phi = \langle \varphi_a \colon \bigcap_{i < n} x_{a(i)} \ra H \mid a \in \delta^n \rangle
    \hspace{0.2in} \text{ or } \hspace{0.2in} \Phi = \langle \varphi_a \colon \bigcap_{i < n} x_{a(i)} \ra H \mid 
    a \in [\delta]^n \rangle.
  \]
  The notions of \emph{coherence} and \emph{triviality} for such families are defined in the obvious ways.
\end{remark}

\begin{definition}
  Suppose that $\Phi = \langle \varphi_u \mid u \in \mc{I}^n \rangle$ is an 
  $n$-family. If $\mc{J} \subseteq \mc{I}$, then we let $\Phi \restriction 
  \mc{J}$ denote $\langle \varphi_u \mid u \in \mc{J}^n \rangle$. 
  
  If $\Phi = \langle \varphi_u \mid u \in \mc{I}^n \rangle$ and 
  $\Psi = \langle \psi_u \mid u \in \mc{J}^n \rangle$ are $n$-families, 
  then $\Psi \sqsubseteq \Phi$ denotes the assertion that $\Psi = \Phi \restriction 
  \mc{J}$ (we will sometimes say in this situation that $\Phi$ \emph{extends} 
  $\Psi$).
\end{definition}

Note that, if $\mc{J} \subseteq \mc{I}$ and $\Phi$ as above is 
coherent (\emph{resp.}\ trivial), then $\Phi \restriction \mc{J}$ is 
also coherent (\emph{resp.}\ trivial). The following is a partial converse to 
this; it is essentially a special case of 
analogous statements about derived limits appearing in work of Roos \cite{roos} 
and Jensen \cite{jensen_lim}, and later generalized to the context of cofinal 
functors by Mitchell \cite{mitchell_dimension}. We provide a proof for 
completeness.

\begin{proposition} \label{prop: cofinal_subset}
  Suppose that $\Phi = \langle \varphi_u \mid u \in \mc{I}^n \rangle$ is 
  a coherent $H$-valued $n$-family, $\mc{J}$ is a $\subseteq^*$-cofinal 
  subset of $\mc{I}$, and $\Phi \restriction \mc{J}$ is trivial. Then $\Phi$ is trivial.
\end{proposition}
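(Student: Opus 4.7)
The plan is to split into cases $n = 1$ and $n > 1$; in both, the idea is to use the $\subseteq^*$-cofinality of $\mc{J}$ in $\mc{I}$ to extend a given trivialization of $\Phi \restriction \mc{J}$ to one of $\Phi$, following the pattern of the proof of \cite[Lemma 2.7]{svhdl}.

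For $n = 1$, let $\psi^* : \bigcup \mc{J} \to H$ trivialize $\Phi \restriction \mc{J}$. I extend it to $\psi : \bigcup \mc{I} \to H$ by setting $\psi \equiv 0$ on $\bigcup \mc{I} \setminus \bigcup \mc{J}$. Given $u \in \mc{I}$, I choose $v \in \mc{J}$ with $u \subseteq^* v$. On the cofinite subset $u \cap v$ of $u$, $\psi$ agrees with $\psi^*$, $\psi^*$ agrees modulo finite with $\varphi_v$, and coherence of the $1$-family $\Phi$ (which amounts to $\varphi_a =^* \varphi_b$ on $a \cap b$ for all $a, b \in \mc{I}$) gives $\varphi_v =^* \varphi_u$ on $u \cap v$. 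Chaining these identities yields $\psi \restriction u =^* \varphi_u$.

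For $n > 1$, let $\Psi^* = \langle \psi^*_t \mid t \in \mc{J}^{n-1} \rangle$ trivialize $\Phi \restriction \mc{J}$, and fix a choice function $u \mapsto \hat u : \mc{I} \to \mc{J}$ with $u \subseteq^* \hat u$, arranged so that $\hat u = u$ whenever $u \in \mc{J}$; extend componentwise to $\mc{I}^{n-1}$. The naive attempt $\psi_t := \psi^*_{\hat t}$ (extended by $0$ off the common domain) gives, after unfolding, $\sum_i (-1)^i \psi_{u^i} =^* \varphi_{\hat u}$, and so reduces triviality of $\Phi$ to the identity $\varphi_u =^* \varphi_{\hat u}$ on the common domain --- which fails in general. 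I remedy this by adding a correction: set $\psi_t := \psi^*_{\hat t} + C_t$, where $C_t$ is a signed sum of values of $\varphi$ on $n$-tuples interpolating between $t$ and $\hat t$. Already for $n = 2$, coherence applied to the $3$-tuples $(u_0, \hat u_0, u_1)$ and $(\hat u_0, u_1, \hat u_1)$ yields $\varphi_{u_0, u_1} - \varphi_{\hat u_0, \hat u_1} =^* \varphi_{u_0, \hat u_0} - \varphi_{u_1, \hat u_1}$, so $C_s := -\varphi_{s, \hat s}$ works. For general $n$, iteratively applying coherence to the $(n+1)$-tuples $\langle \hat u(0), \ldots, \hat u(k-1), u(k), \hat u(k), u(k+1), \ldots, u(n-1) \rangle$ for $0 \leq k < n$ telescopes $\varphi_u - \varphi_{\hat u}$ modulo finite into a signed sum of $\varphi$-values on specific $n$-tuples, and I define $C_t$ so that $\sum_i (-1)^i C_{u^i}$ reproduces this telescope.

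The principal obstacle is combinatorial bookkeeping: arranging the correction so that the resulting family $\Psi = \langle \psi_t \rangle$ is alternating, and so that $\sum_i (-1)^i C_{u^i}$ matches the telescoping expression modulo finite. This is a direct check using the alternating property and coherence of $\Phi$, together with the fact that the componentwise extension of $u \mapsto \hat u$ commutes with permutations.
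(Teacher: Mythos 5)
Your proposal is correct and takes essentially the same route as the paper's proof: both fix a map $u \mapsto u^+$ (your $\hat u$) into $\mc{J}$, set $\psi_t = \psi^*_{t^+} + C_t$ with $C_t$ a signed alternating sum of $\varphi$-values on $n$-tuples interpolating between $t$ and $t^+$ (in the paper, $C_t = -\sum_{i=0}^{n-2}(-1)^i\varphi_{\pi_i(t)}$ with $\pi_i(t) = \langle t(0),\ldots,t(i),t(i)^+,\ldots,t(n-2)^+\rangle$), and verify triviality by telescoping via coherence on the interpolating $(n+1)$-tuples. The only cosmetic difference is the direction of interpolation (you prepend the hatted entries, the paper appends them); for $n=2$ your $C_s = -\varphi_{s,\hat s}$ coincides exactly with the paper's formula.
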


\begin{proof}
  We assume that $n > 1$. The case in which $n = 1$ is easier and left to the reader.
  Let $\Psi = \langle \psi_t \mid t \in \mc{J}^{n-1} \rangle$ trivialize $\Phi 
  \restriction \mathcal{J}$. Fix an arbitrary linear order $<_{\mc{I}}$ of $\mc{I}$, 
  and recall the definition of $[\mc{I}]^m$ from Remark \ref{remark: coherent_trivial}. 
  By the same remark, in order to show that $\Phi$ is trivial, 
  it suffices to define an $(n-1)$-family 
  \[
    \left\langle \psi'_t \colon \bigcup_{i < n-1} t(i) \ra H ~ \middle| ~ 
    t \in [\mc{I}]^{n-1} \right\rangle
  \]
  such that, for all $u \in [\mc{I}]^n$, we have 
  $\varphi_u =^* \sum_{i<n} (-1)^i \psi'_{u^i}$.
  
  For each $x \in \mc{I}$, choose an $x^+ \in \mc{J}$ 
  such that $x \subseteq^* x^+$. Given $m < \omega$ and  $v \in \mc{I}^m$, define 
  \begin{itemize}
    \item $v^+ \in \mc{J}^m$ by letting $v^+(i) = v(i)^+$ for all $i < m$; and
    \item for $i < m$, define $\pi_i(v) \in \mc{I}^{m+1}$ as 
    \[
      \pi_i(v) = \langle v(0), \ldots, v(i), v(i)^+, \ldots, v(m-1)^+ \rangle.
    \]
  \end{itemize}
  Now define the $H$-valued $(n-1)$-family $\Psi' = \langle \psi'_t \mid t 
  \in [\mc{I}]^{n-1} \rangle$ by letting 
  \[
    \psi'_t = \psi_{t^+} - \sum_{i<n-1} (-1)^i \varphi_{\pi_i(t)}
  \]
  for all $t \in [\mc{I}]^{n-1}$ (the sum on the right is defined on all but finitely many 
  elements of $\bigcap_{i < n-1} t(i)$; $\psi'_t$ is defined more precisely by extending that 
  sum to the domain $\bigcap_{i < n-1} t(i)$ by setting it equal to $0$ on all otherwise undefined 
  arguments).
  
  We claim that $\Psi'$ trivializes $\Phi$. To verify this, fix $u \in [\mc{I}]^n$, 
  and consider 
  \begin{align} \label{triv_sum}
    \sum_{i < n} (-1)^i \psi'_{u^i} = \sum_{i < n} (-1)^i \left(\psi_{(u^i)^+} - 
    \sum_{j < n-1} (-1)^j \varphi_{\pi_j(u^i)}\right).
  \end{align}
  Note first that, since $\Psi$ trivializes $\Phi \restriction \mathcal{J}$, we have
  \[
    \sum_{i<n} (-1)^i \psi_{(u^i)^+} =^* \varphi_{u^+},
  \]
  and therefore the sum on the right-hand side of (\ref{triv_sum}) reduces to
  \begin{align} \label{triv_sum_2}
    \varphi_{u^+} + \sum_{i<n} \sum_{j<n-1} (-1)^{i+j+1} \varphi_{\pi_j(u^i)}.
  \end{align}
  For each $\ell < n$, let 
  \[
    A_\ell = \{(i,j) \in (n-1) \times (n-2) \mid \ell = j < i \text{ or } i \leq j = \ell -1\}.
  \]
  Note that $A_\ell$ consists precisely of those pairs $(i,j)$ for which the formal definition of 
  the sequence $\pi_j(u^i)$ contains $u_\ell$ and $(u_\ell)^+$ as consecutive elements. Now the sum 
  in (\ref{triv_sum_2}) becomes
  \[
    \varphi_{u^+} + \sum_{\ell < n} \sum_{(i,j) \in A_\ell} (-1)^{i+j+1} \varphi_{\pi_j(u^i)}.
  \]
  By the coherence of $\Phi$ applied to the sequence $\langle u(0), 
  u(0)^+, u(1)^+, \ldots, u(n-1)^+ \rangle$, we have
  \[
    \varphi_{u^+} + \sum_{(i,j) \in A_0} (-1)^{i+j+1} \varphi_{\pi_j(u^i)} =^* 
    \varphi_{\langle u(0), u(1)^+, \ldots, u(n-1)^+ \rangle}.
  \]
  More generally, for $\ell < n$, we have
  \begin{align*}
    \varphi_{\langle u(0), \ldots, u(\ell - 1), u(\ell)^+, \ldots, u(n-1)^+\rangle} 
    &+ \sum_{(i,j) \in A_\ell} (-1)^{i+j+1} \varphi_{\pi_j(u^i)} \\ &=^*
    \varphi_{\langle u(0), \ldots, u(\ell), u(\ell + 1)^+, \ldots, u(n-1)^+ \rangle}.
  \end{align*}
  By repeatedly applying these equalities, we obtain:
  \begin{align*}
    \sum_{i<n} (-1)^i \psi'_{\varphi_{u^i}} & = \varphi_{u^+} + \sum_{\ell < n} \sum_{(i,j) \in A_\ell} (-1)^{i+j+1} \varphi_{\pi_j(u^i)} \\ 
    &=^* \varphi_{\langle u(0), u(1)^+, \ldots, u(n-1)^+ \rangle} + 
    \sum_{1 \leq \ell < n} \sum_{(i,j) \in A_\ell} (-1)^{i+j+1} \varphi_{\pi_j(u^i)} \\ 
    &=^* \varphi_{\langle u(1),u(1),u(2)^+, \ldots, u(n-1)^+ \rangle} + 
    \sum_{2 \leq \ell < n} \sum_{(i,j) \in A_\ell} (-1)^{i+j+1} \varphi_{\pi_j(u^i)} \\ 
    & =^* \cdots \\ 
    & =^* \varphi_{\langle u(1), \ldots, u(n-2), u(n-1)^+ \rangle} + \sum_{(i,j) \in A_{n-1}} 
    (-1)^{i+j+1} \varphi_{\pi_j(u^i)} \\ 
    & =^* \varphi_u.
  \end{align*}
  Thus, $\Psi'$ does trivialize $\Phi$, as desired.
\end{proof}

We can also lift coherence from a $\subseteq^*$-cofinal subset of $\mc{I}$ to all of $\mc{I}$, 
in the following sense.

\begin{proposition} \label{prop: extension}
  Suppose that $\mc{J}$ is a $\subseteq^*$-cofinal subset of $\mc{I}$ and 
  $\Phi = \langle \varphi_u \mid u \in \mc{J}^n \rangle$ is a coherent $H$-valued $n$-family. 
  Then there is a coherent $H$-valued $n$-family $\Phi' = \langle \varphi'_u \mid u \in 
  \mc{I}^n \rangle$ such that $\Phi' \restriction \mc{J} = \Phi$.
\end{proposition}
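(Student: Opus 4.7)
The plan is to define $\Phi'$ by transferring values from $\Phi$ along a choice of $\sub^*$-cofinal ``upper bounds" in $\mc{J}$, filling in by $0$ on any finite discrepancies. Specifically, for each $x \in \mc{I}$ I would fix $x^+ \in \mc{J}$ with $x \sub^* x^+$, and crucially I would stipulate that $x^+ = x$ whenever $x \in \mc{J}$ itself, so that no perturbation happens on $\mc{J}$. Given $u \in \mc{I}^n$, write $u^+ \in \mc{J}^n$ for the componentwise image. Then define $\varphi'_u : \bigcap_{i<n} u(i) \ra H$ by setting
\[
  \varphi'_u(k) = \begin{cases} \varphi_{u^+}(k) & \text{if } k \in \bigcap_{i<n} u(i)^+, \\ 0 & \text{otherwise.} \end{cases}
\]
This definition makes sense because $\bigcap_{i<n} u(i)$ and $\bigcap_{i<n} u(i)^+$ differ only on a finite set: any $k \in \bigcap_{i<n} u(i) \setminus \bigcap_{i<n} u(i)^+$ lies in $u(i) \setminus u(i)^+$ for some $i$, and each such difference is finite by the choice of $u(i)^+$.

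Next I would verify the three required properties. First, $\Phi'$ extends $\Phi$: if $u \in \mc{J}^n$ then $u^+ = u$ by the convention above, so $\bigcap u(i)^+ = \bigcap u(i)$ and $\varphi'_u = \varphi_u$ on the nose. Second, $\Phi'$ is alternating: since $(u \circ \sigma)^+ = u^+ \circ \sigma$ and $\bigcap (u \circ \sigma)(i) = \bigcap u(i)$, the alternating property of $\Phi$ transfers pointwise, giving $\varphi'_{u \circ \sigma} = \mathrm{sgn}(\sigma) \varphi'_u$.

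Third, for coherence, fix $v \in \mc{I}^{n+1}$. For each $i < n+1$, one has $\varphi'_{v^i} \restriction \bigcap_{j \ne i} v(j)^+ = \varphi_{(v^+)^i}$ by construction, and again $\bigcap_{j < n+1} v(j)$ differs from $\bigcap_{j < n+1} v(j)^+$ only finitely. Since $\Phi$ is coherent we have
\[
  \sum_{i < n+1} (-1)^i \varphi_{(v^+)^i} =^* 0
\]
on $\bigcap_{j<n+1} v(j)^+$, hence the same holds for $\sum_{i<n+1}(-1)^i \varphi'_{v^i}$ on the intersection with $\bigcap v(j)$, and extending by the finitely many remaining points preserves the $=^*$ relation.

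The argument is essentially bookkeeping once the ``$x^+ = x$ for $x \in \mc{J}$" convention is adopted; the only subtle point—and the one worth stating carefully—is the finite-discrepancy estimate $\bigcap v(j) \setminus \bigcap v(j)^+ \sub \bigcup_j (v(j) \setminus v(j)^+)$, which is what allows values in $\mc{I}^n$ to inherit coherence (an $=^*$ statement) from values in $\mc{J}^n$ despite the domain mismatch.
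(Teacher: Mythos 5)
Your proof is correct and takes essentially the same approach as the paper's: choose $x^+ \in \mc{J}$ with $x \sub^* x^+$ (and $x^+ = x$ for $x \in \mc{J}$), transfer values along $u \mapsto u^+$, and pad with $0$ on the finitely many missing points. The paper states the verification is ``immediate from the construction''; you fill it in explicitly, and your details are right (the only small imprecision is the phrase ``$\bigcap u(i)$ and $\bigcap u(i)^+$ differ only on a finite set''—they can differ infinitely in the direction $\bigcap u(i)^+ \setminus \bigcap u(i)$, but the relevant containment $\bigcap u(i) \setminus \bigcap u(i)^+ \subseteq \bigcup_i (u(i) \setminus u(i)^+)$, which you do state, is what is actually used).
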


\begin{proof}
  For each $x \in \mc{I}$, choose an $x^+ \in \mc{J}$ such that $x \subseteq^* x^+$. 
  If $x \in \mc{J}$, then choose $x^+ = x$. Given $m < \omega$ and $v \in \mc{I}^m$, 
  define $v^+ \in \mc{J}^m$ by letting $v^+(i) = v(i)^+$ for all $i < m$. 
  Now, for all $u \in \mc{I}^n$, define $\varphi'_u \colon \bigcap_{i < n} u(i) \ra H$ 
  by setting $\varphi'_u = \varphi_{u^+}$. Note that $\varphi_{u^+}$ is a function 
  defined on all but finitely many elements of $\bigcap_{i < n} u(i)$; $\varphi'_u$ is 
  defined more precisely by extending that sum to the domain $\bigcap_{i<n} u(i)$ 
  by setting it equal to $0$ on all otherwise undefined arguments.
  
  It is now immediate from the construction that $\Phi'$ is a coherent $n$-family 
  with $\Phi' \restriction \mc{J} = \Phi$.
\end{proof}

\begin{remark} \label{remark: extension}
  In light of Proposition  \ref{prop: extension}, 
  if we are given $0 < n < \omega$, an abelian group $H$, a collection $\mc{I}$ of sets, 
  and a $\subseteq^*$-cofinal subset $\mc{J} \subseteq \mc{I}$ and we seek to construct 
  a nontrivial coherent $H$-valued $n$-family indexed along $\mc{I}$, then it suffices to 
  construct such a family indexed along $\mc{J}$. Indeed, by Proposition \ref{prop: extension}, 
  this family extends to one indexed along $\mc{I}$, and the nontriviality of the original family 
  clearly implies the nontriviality of the extension. 
\end{remark}

The following is a variation on Goblot's vanishing theorem \cite{goblot} indicating that if 
the cofinality of $(\mc{I}, \subseteq^*)$ is less than $\omega_n$, then every coherent $n$-family 
indexed along $\mc{I}$ is trivial.\footnote{We note that the traditional statement 
of Goblot's theorem in the analogous setting requires the indexing set 
$\mc{I}$ to be $\subseteq$-directed, while we are not making that assumption 
in Proposition \ref{prop: goblot_var}. The reason we are able to forgo this 
assumption is that we are considering $n$-families of functions indexed by 
\emph{all} $n$-tuples from $\mc{I}$ rather than just those $n$-tuples that are 
$\subseteq$-increasing, which would correspond more directly to the classical 
setting of Goblot's theorem.}

\begin{proposition} \label{prop: goblot_var}
  Suppose that $H$ is an abelian group, $0 < n < \omega$, $\mc{I}$ is a collection of sets with
  $\cf(\mc{I}, \subseteq^*) < \omega_n$, and $\Phi = 
  \langle \varphi_u \mid u \in \mc{I}^n \rangle$ is an $H$-valued coherent 
  $n$-family. Then $\Phi$ is trivial.
\end{proposition}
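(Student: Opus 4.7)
The plan is to argue by induction on $n$, using Proposition \ref{prop: cofinal_subset} to replace $\mc{I}$ with a $\subseteq^*$-cofinal subset of cardinality $<\omega_n$. Since any cardinal strictly below $\omega_n$ is at most $\omega_{n-1}$ (with the convention $\omega_0 := \omega$), we may enumerate $\mc{I}$ in order-type at most $\omega_{n-1}$ as $\langle x_\alpha \mid \alpha < \gamma\rangle$, and write $\mc{I}_\alpha := \{x_\beta \mid \beta < \alpha\}$.

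For the base case $n = 1$, where $\mc{I}$ is countable, I would perform a standard diagonal construction: setting $w_k := x_k \setminus \bigcup_{j<k} x_j$, define $\psi : \bigcup \mc{I} \ra H$ by $\psi \restriction w_k := \varphi_{x_k} \restriction w_k$. For each $\ell$ the set $x_\ell$ decomposes as the disjoint union of the finitely many pieces $x_\ell \cap w_k$ with $k \leq \ell$, and on each such piece $\psi$ agrees with $\varphi_{x_\ell}$ modulo finite by the coherence relation $\varphi_{x_k} =^* \varphi_{x_\ell}$ on $x_k \cap x_\ell$; hence $\psi$ trivializes $\Phi$.

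For the inductive step $n \geq 2$, I would construct a $\sqsubseteq$-increasing chain $\langle \Psi^\alpha \mid \alpha \leq \gamma\rangle$ of trivializations of the restrictions $\Phi \restriction \mc{I}_\alpha$, taking unions at limit stages. At a successor step $\alpha + 1$, form the alternating $(n-1)$-family $\delta$ indexed along $\mc{K} := \{x_\beta \cap x_\alpha \mid \beta < \alpha\}$, setting
\[
  \delta_{\langle x_{\beta_0} \cap x_\alpha, \ldots, x_{\beta_{n-2}} \cap x_\alpha\rangle} := \varphi_{\langle x_\alpha, x_{\beta_0}, \ldots, x_{\beta_{n-2}}\rangle} - \psi^\alpha_{\langle x_{\beta_0}, \ldots, x_{\beta_{n-2}}\rangle}
\]
for increasing $\beta_0 < \ldots < \beta_{n-2} < \alpha$. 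A short computation applying the coherence of $\Phi$ to $(n+1)$-tuples prepended by $x_\alpha$, together with the trivialization property of $\Psi^\alpha$, shows that $\delta$ is coherent. Since $|\mc{K}| < \omega_{n-1}$, the inductive hypothesis yields a trivializing $(n-2)$-family $\eta$ (which, for $n = 2$, is a single function). Defining
\[
  \psi^{\alpha+1}_{\langle x_\alpha, x_{\beta_0}, \ldots, x_{\beta_{n-3}}\rangle} := -\eta_{\langle x_{\beta_0} \cap x_\alpha, \ldots, x_{\beta_{n-3}} \cap x_\alpha\rangle}
\]
on strictly increasing tuples, extending by alternation to all injective tuples, and retaining previous values on $\mc{I}_\alpha^{n-1}$, the trivialization identity for a new tuple of the form $u = \langle x_\alpha, x_{\beta_0}, \ldots, x_{\beta_{n-2}}\rangle$ unfolds, via the usual sign bookkeeping from the alternation, to exactly the trivialization identity between $\eta$ and $\delta$.

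The main technical obstacle will be the correct handling of domains for $\delta$: its natural values $\varphi_{\langle x_\alpha, t(0),\ldots, t(n-2)\rangle} - \psi^\alpha_t$ live only on $x_\alpha \cap \bigcap_i t(i)$ rather than on the full intersection $\bigcap_i t(i)$, so the reindexing along $\mc{K}$ (in place of $\mc{I}_\alpha$) is essential in order to present $\delta$ as a genuine coherent $(n-1)$-family whose index set still has cardinality $< \omega_{n-1}$, making the inductive hypothesis applicable. Once this is arranged, the remaining verifications amount to routine sign calculations.
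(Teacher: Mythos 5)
Your proposal is correct and follows essentially the same route as the paper: reduce to $|\mc{I}| < \omega_n$ via Proposition \ref{prop: cofinal_subset}, handle $n=1$ by the partition into $x_k \setminus \bigcup_{j<k}x_j$, and for $n\geq 2$ build the trivialization by recursion on the enumeration, at each successor stage applying the inductive hypothesis to the restricted $(n-1)$-family $\delta_t = \varphi_{\langle x_\alpha,\ldots\rangle} - \psi^\alpha_t$ (the paper's $T_\xi$, up to a global sign from placing $x_\alpha$ at the front rather than the end). The one point you emphasize more than the paper --- that $\delta$ must be viewed as indexed along the restricted sets $\{x_\beta\cap x_\alpha \mid \beta<\alpha\}$ so that its domains match the definition of an $(n-1)$-family and the cardinality bound $<\omega_{n-1}$ applies --- is indeed implicit in the paper's writing of $T_\xi$, so the two proofs coincide in substance.
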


\begin{proof}
  For concreteness, assume that $\cf(\mc{I}, \subseteq^*)$ is infinite; 
  the case in which it is finite is much easier. Let $\kappa = \cf(\mc{I}, 
  \subseteq^*)$. By replacing $\mc{I}$ with some $\subseteq^*$-cofinal 
  subset and invoking Proposition \ref{prop: cofinal_subset}, we can assume that 
  $|\mc{I}| = \kappa$. Enumerate $\mc{I}$ as $\langle x_\eta \mid \eta < 
  \kappa \rangle$. As noted in Remark \ref{remark: coherent_trivial}, 
  we will think of $\Phi$ as $\langle \varphi_b \mid b 
  \in [\kappa]^n \rangle$, where, formally, given 
  $b \in [\kappa]^n$, $\varphi_b = \varphi_{\langle x_{b(i)} \mid i < n \rangle}$.
  
  The proof is by induction on $n$. Suppose first that $n = 1$, and hence 
  $\kappa = \omega$. We will define a function $\psi \colon \bigcup \mc{I} \ra H$ 
  witnessing that $\Phi$ is trivial. For each $k < \omega$, let $y_k = 
  x_k \setminus (\bigcup \{x_j \mid j < k\})$; note that $\{y_k \mid k < \omega\}$ 
  is a partition of $\bigcup \mc{I}$. Now, for each $k < \omega$, set 
  $\psi \restriction y_k = \varphi_k \restriction y_k$.
  Using the coherence of $\Phi$, it is straightforward to verify that $\psi$ 
  thus defined witnesses that $\Phi$ is trivial.
  
  Now suppose that $n > 1$. We must construct an alternating $(n-1)$-family 
  \[
    \Psi = \langle \psi_t \mid t \in \mc{I}^{n-1} \rangle
  \]
  witnessing that $\Phi$ is trivial. Again recalling Remark \ref{remark: coherent_trivial}, 
  it suffices to construct a family
  \[
    \Psi = \langle \psi_a \mid a \in [\kappa]^{n-1} \rangle
  \]
  such that, for all $b \in [\kappa]^n$, we have
  \[
    \varphi_b =^* \sum_{i < n} (-1)^i \psi_{b^i}.
  \]
  
  The construction is slightly different depending on whether $n = 2$ or $n > 2$. 
  Suppose first that $n = 2$, in which case $\kappa \leq \omega_1$. We will 
  construct $\psi_\eta \colon x_\eta \ra H$ for $\eta < \kappa$ by recursion on $\eta$.
  
  Suppose that $\xi < \kappa$ and we have constructed $\langle \psi_\eta \mid 
  \eta < \xi \rangle$ such that, for all $\eta_0 < \eta_1 < \xi$, we have 
  $\varphi_{\eta_0\eta_1} =^* \psi_{\eta_1} - \psi_{\eta_0}$. Define a $1$-family of 
  functions
  \[
    T_\xi = \langle \tau_\eta \colon x_{\eta} \cap x_\xi \ra H \mid \eta < \xi \rangle
  \]
  by letting $\tau_\eta = \psi_\eta + \varphi_{\eta\xi}$.
  
  \begin{claim}
    $T_\xi$ is coherent.
  \end{claim}
  
  \begin{proof}
    Fix $\eta_0 < \eta_1 < \xi$. Then
    \begin{align*}
      \tau_{\eta_1} - \tau_{\eta_0} &= (\psi_{\eta_1} + \varphi_{\eta_1\xi}) 
      - (\psi_{\eta_0} + \varphi_{\eta_0\xi}) \\ &=^* \varphi_{\eta_1\xi} 
      - \varphi_{\eta_0\xi} + \varphi_{\eta_0\eta_1} \\ &=^* 0,
    \end{align*}
    where all functions are restricted to the domain $x_{\eta_0} \cap x_{\eta_1} 
    \cap x_\xi$, the passage from the first line to the second follows from 
    our assumptions about $\langle \psi_\eta \mid \eta < \xi \rangle$, and 
    the passage from the second line to the third follows from the 
    coherence of $\Phi$.
  \end{proof}
  
  Since $|\xi| < \omega_1$, the inductive hypothesis implies that $T_\xi$ is 
  trivial. We can therefore fix a function $\psi_\xi \colon x_\xi \ra H$ such 
  that, for all $\eta < \xi$, we have $\psi_\xi \restriction (x_\eta \cap x_\xi) 
  =^* \tau_\eta$. Note that, for all $\eta < \xi$, we have
  \[
    \psi_\xi - \psi_\eta =^* \tau_\eta - \psi_\eta = (\psi_\eta + \varphi_{\eta\xi}) 
    - \psi_\eta = \varphi_{\eta\xi},
  \]
  so this choice satisfies the requirements of the construction. This completes the 
  case $n = 2$.
  
  Suppose now that $n > 2$. We will construct $\psi_a \colon 
  \bigcap_{i < n-1} x_{a(i)} \ra H$ for $a \in [\kappa]^{n-1}$ by recursion on 
  $\max(a)$. 
  
  Suppose that $\xi < \kappa$ and we have constructed $\langle \psi_a \mid 
  a \in [\xi]^{n-1} \rangle$ such that, for all $b \in [\xi]^n$, we have
  \[
    \varphi_b =^* \sum_{i < n} (-1)^i \psi_{b^i}.
  \]
  We now describe how to define $\varphi_{d \cup \{\xi\}}$ for $d \in 
  [\xi]^{n-2}$. First, define an $(n-1)$-family of functions
  \[
    T_\xi = \langle \tau_a \colon x_\xi \cap \bigcap \{x_{a(i)} 
    \mid i < n-1\} \ra H \mid a \in [\xi]^{n-1} \rangle
  \]
  by letting $\tau_a = (-1)^n\psi_a + \varphi_{a \cup \{\xi\}}$.
  
  \begin{claim}
    $T_\xi$ is coherent.
  \end{claim}
  
  \begin{proof}
    Fix $b \in [\xi]^n$, and let $c = b \cup \{\xi\}$. Note that, 
    for $i < n-1$, we have $c^i = b^i \cup \{\xi\}$, and $c^{n-1} = b$. Then
    \begin{align*}
      \sum_{i < n} (-1)^i \tau_{b^i} &= (-1)^n \sum_{i < n} (-1)^i 
      \psi_{b^i} + \sum_{i < n} (-1)^i \varphi_{b^i \cup \{\xi\}} \\ 
      &=^* (-1)^n \varphi_b + \sum_{i < n} (-1)^i \varphi_{b^i \cup \{\xi\}} \\ 
      &= (-1)^n \varphi_{c^{n-1}} + \sum_{i < n} (-1)^i 
      \varphi_{c^i} \\ &= \sum_{i < n+1} (-1)^i \varphi_{c^i} =^* 0,
    \end{align*}
    where all functions are restricted to the domain $x_\xi \cap 
    \bigcap \{x_{b(i)} \mid i < n\}$, the passage from the first line to 
    the second follows from our assumptions about $\langle \psi_a \mid 
    a \in [\xi]^{n-1} \rangle$, the passage from the second line to the third 
    follows from the observations at the beginning of this proof, the 
    passage from the third line to the fourth is a simple rearranging of terms, 
    and the final equality (mod finite) follows from the coherence 
    of $\Phi$.
  \end{proof}
  
  Since $|\xi| < \kappa \leq \omega_{n-1}$, the inductive hypothesis implies 
  that $T_\xi$ is trivial. We can thus fix an $(n-2)$-family of functions
  \[
    \langle \psi_{d \cup \{\xi\}} \colon x_\xi \cap \bigcap \{x_{d(i)} \mid i < n-2\} \ra H
    \mid d \in [\xi]^{n-2} \rangle
  \]
  such that, for all $a \in [\xi]^{n-1}$, we have
  \[
    \tau_a =^* \sum_{i < n-1} (-1)^i \psi_{a^i \cup \{\xi\}}.
  \]
  We claim that this assignment of $\psi_{d \cup \{\xi\}}$ works. 
  To check this, fix an arbitrary $a \in [\xi]^{n-1}$, and let $b = a \cup \{\xi\}$. 
  Then
  \begin{align*}
    \sum_{i < n} (-1)^i \psi_{b^i} &= (-1)^{n-1} \psi_a + \sum_{i < n-1} 
    (-1)^i \psi_{a^i \cup \{\xi\}} \\ &=^* (-1)^{n-1} \psi_a + 
    \tau_a \\ &= (-1)^{n-1} \psi_a + (-1)^n \psi_a + \varphi_{a \cup \{\xi\}} 
    \\ &= \varphi_b.
  \end{align*}
  Thus, we can carry out the construction of a family $\langle \psi_a \mid a \in 
  [\kappa]^{n-1} \rangle$ witnessing that $\Phi$ is trivial, completing 
  the proof.
\end{proof}

\begin{definition}
  Suppose that $\vec{x} = \langle x_\alpha \mid \alpha < \delta \rangle$ 
  is a sequence of sets, $1 \leq n < \omega$, $H$ is an abelian group, and $\Phi = 
  \langle \varphi_a \colon \bigcap_{i < n} x_{a(i)} \ra H \mid u \in \delta^n 
  \rangle$ is an $n$-family of functions indexed along $\vec{x}$. If $E$ is a set, then $\Phi \restriction 
  \restriction E$ denotes the $n$-family 
  \[
    \left\langle \varphi_a \restriction E \cap \bigcap_{i < n} x_{a(i)} \ \middle| \ 
    a \in \delta^n \right\rangle
  \]
  indexed along $\vec{x} \restriction \restriction E := \langle x_\alpha \cap E \mid 
  \alpha < \delta \rangle$.
\end{definition}

Note that, if $\Phi$ is an alternating, coherent, or trivial $n$-family, then these 
properties are inherited by $\Phi \restriction \restriction E$ for every set $E$. 
In particular, if $\Phi$ is a coherent $n$-family and $\Phi \restriction \restriction E$ 
is nontrivial for some set $E$, then also $\Phi$ is nontrivial.

\begin{definition}
  Suppose that $1 < n < \omega$, $\delta$ is an ordinal, $H$ is an abelian group, 
  $\langle x_\alpha \mid \alpha < \delta \rangle$ is 
  a sequence of sets, and $\Phi = \langle \varphi_b \colon \bigcap_{i < n} x_{b(i)} \ra H
  \mid b \in \delta^n \rangle$ is a coherent $n$-family.
  Let $\mathrm{Triv}_H(\Phi)$ be the 
  set of all $(n-1)$-families $\Psi = \langle \psi_a \colon \bigcap_{i < n-1} x_{a(i)} \ra H 
  \mid a \in \delta^{n-1} \rangle$ that trivialize $\Phi$ 
  (so $\mathrm{Triv}_H(\Phi)$ is empty if $\Phi$ is nontrivial).
\end{definition}

If the group $H$ is clear from context, we may omit it from the notation 
$\mathrm{Triv}_H(\Phi)$.

\begin{proposition}
  Suppose that $1 < n < \omega$, $\delta$ is an ordinal, $H$ is an abelian group, 
  $\langle x_\alpha \mid \alpha < \delta \rangle$ is 
  a sequence of sets, and $\Phi = \langle \varphi_b \colon \bigcap_{i < n} x_{b(i)} \ra H
  \mid b \in \delta^n \rangle$ is a coherent $n$-family. 
  For all $\Psi^0, \Psi^1 \in \mathrm{Triv}_H(\Phi)$, the $(n-1)$-family
  \[
    \Psi^1 - \Psi^0 = \langle \psi^1_a - \psi^0_a \mid a \in \delta^{n-1} \rangle
  \]
  is coherent.
\end{proposition}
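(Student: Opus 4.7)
The plan is to verify directly the two conditions in the definition of coherence for $\Psi^1 - \Psi^0$, namely that it is alternating and satisfies the cocycle condition modulo finite.

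First I would check the alternating property. By definition, if $\Psi^0$ and $\Psi^1$ trivialize $\Phi$, then each is an alternating $(n-1)$-family. Hence for any $a \in \delta^{n-1}$ and any permutation $\sigma : (n-1) \ra (n-1)$,
\[
  (\psi^1 - \psi^0)_{a \circ \sigma} = \psi^1_{a \circ \sigma} - \psi^0_{a \circ \sigma} = \mathrm{sgn}(\sigma) \psi^1_a - \mathrm{sgn}(\sigma) \psi^0_a = \mathrm{sgn}(\sigma) (\psi^1 - \psi^0)_a,
\]
so $\Psi^1 - \Psi^0$ is alternating.

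Next I would verify the cocycle condition. Fix $b \in \delta^n$. Since $\Psi^0$ and $\Psi^1$ both trivialize $\Phi$, we have
\[
  \sum_{i < n} (-1)^i \psi^0_{b^i} =^* \varphi_b \quad \text{and} \quad \sum_{i < n} (-1)^i \psi^1_{b^i} =^* \varphi_b,
\]
where all functions are understood to be restricted to $\bigcap_{i < n} x_{b(i)}$. Subtracting the first equation from the second yields
\[
  \sum_{i < n} (-1)^i (\psi^1_{b^i} - \psi^0_{b^i}) =^* \varphi_b - \varphi_b = 0,
\]
which is exactly the cocycle condition for $\Psi^1 - \Psi^0$. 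Combined with the alternating property, this shows that $\Psi^1 - \Psi^0$ is coherent.

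There is no real obstacle here: the statement is essentially a formal consequence of the $\bb{Z}$-linearity of the defining conditions for trivializations together with the fact that the difference of two alternating families is alternating. The only mild bookkeeping point is to keep track of the domains on which the equalities hold, but since $=^*$ is an equivalence relation stable under finite sums on fixed domains, this causes no difficulty.
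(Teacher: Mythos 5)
Your proof is correct and takes essentially the same approach as the paper: verify that $\Psi^1 - \Psi^0$ is alternating (immediate from linearity) and then use the cocycle/trivialization identities for $\Psi^0$ and $\Psi^1$, subtracting them to get coherence. In fact your version is slightly more carefully written than the paper's, which omits the $(-1)^i$ signs in the displayed sums (a harmless typo); otherwise the two arguments are identical.
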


\begin{proof}
  Fix $\Psi^0, \Psi^1 \in \mathrm{Triv}_H(\Phi)$. The fact that $\Psi^1 - \Psi^0$ is 
  alternating follows immediately from the fact that both $\Psi^0$ and $\Psi^1$ are 
  alternating. To check coherence, fix $b \in \delta^n$. For each $\ell < 2$, the 
  fact that $\Psi^\ell \in \mathrm{Triv}_H(\Phi)$ implies that $\sum_{i < n} 
  \psi^\ell_{b^i} =^* \varphi_b$. Therefore, we have
  \[
    \sum_{i < n} (\psi^1_{b^i} - \psi^0_{b^i}) = \sum_{i < n} \psi^1_{b^i} - 
    \sum_{i < n} \psi^0_{b^i} =^* \varphi_b - \varphi_b = 0,
  \]
  as desired.
\end{proof}

\begin{definition}
  Given $1 < n < \omega$, an abelian group $H$, and an $H$-valued coherent 
  $n$-family $\Phi$, define an equivalence relation $\cong_{\Phi, H}$ on 
  $\mathrm{Triv}_H(\Phi)$ by setting $\Psi^0 \cong_{\Phi,H} \Psi^1$ if and only 
  if the coherent $(n-1)$-family $\Psi^1 - \Psi^0$ is trivial (via an $H$-valued 
  trivialization).
\end{definition}

The following basic facts are easily established; their proofs are left to 
the reader.

\begin{proposition} \label{basic_fact_prop}
  Suppose that $1 < n < \omega$, $H$ is an abelian group,
  $\Phi$ is an $H$-valued coherent $n$-family, and 
  $\Psi^0, \Psi^1 \in \mathrm{Triv}_H(\Phi)$.
  \begin{enumerate}
    \item For every set $E$, we have $\Psi^0 \restriction \restriction E, 
    \Psi^1 \restriction \restriction E 
    \in \mathrm{Triv}_H(\Phi \restriction \restriction E)$.
    \item If $E \subseteq F$ are sets and $\Psi^0 \restriction \restriction F 
    \cong_{\Phi \restriction \restriction F, H} \Psi^1 \restriction 
    \restriction F$, then $\Psi^0 \restriction \restriction E 
    \cong_{\Phi \restriction \restriction E, H} \Psi^1 \restriction 
    \restriction E$. \qed
  \end{enumerate}
\end{proposition}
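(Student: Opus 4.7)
Both clauses are essentially bookkeeping, asserting that the operation $\restriction \restriction E$ commutes with the relations ``trivializes'' and ``is cohomologous to'' in the appropriate way. The key observation is that for any functions $f, g$ with $f =^* g$, and any set $E$, we have $f \restriction E =^* g \restriction E$, since restricting to $E$ can only shrink the set on which $f$ and $g$ disagree. Also, restriction clearly preserves the alternating property.

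For clause (1), I would unpack the definitions directly. Fix $\ell < 2$. Since $\Psi^\ell$ trivializes $\Phi$, it is an alternating $(n-1)$-family such that for every $u \in \delta^n$,
\[
  \varphi_u =^* \sum_{i < n} (-1)^i \psi^\ell_{u^i}.
\]
The family $\Psi^\ell \restriction \restriction E$ is obviously alternating, and restricting the displayed equation to $E \cap \bigcap_{i < n} x_{u(i)}$ yields
\[
  \varphi_u \restriction E \cap \bigcap_{i < n} x_{u(i)} =^* \sum_{i < n} (-1)^i \psi^\ell_{u^i} \restriction E \cap \bigcap_{i < n} x_{u(i)},
\]
which is precisely the statement that $\Psi^\ell \restriction \restriction E$ trivializes $\Phi \restriction \restriction E$ when one recalls the convention (stated at the end of the Notation subsection) that the domain of a sum of functions is the intersection of their domains.

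For clause (2), assume $\Psi^0 \restriction \restriction F \cong_{\Phi \restriction \restriction F, H} \Psi^1 \restriction \restriction F$. By definition, this means there is a trivialization $\Theta = \langle \theta_c \mid c \in \delta^{n-2} \rangle$ (an alternating $(n-2)$-family of $H$-valued functions defined on the appropriate intersections with $F$) witnessing that the coherent $(n-1)$-family $(\Psi^1 - \Psi^0) \restriction \restriction F$ is trivial. I would then show that $\Theta \restriction \restriction E$ trivializes $(\Psi^1 - \Psi^0) \restriction \restriction E$; since $E \subseteq F$, we have $(\Theta \restriction \restriction F) \restriction \restriction E = \Theta \restriction \restriction E$ and similarly for $\Psi^1 - \Psi^0$, so this reduces to the same ``restrict the mod-finite identity'' observation used in clause (1).

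Neither step presents any obstacle; the only thing requiring care is the convention on domains of sums and differences of functions, which guarantees that $(\Psi^1 - \Psi^0) \restriction \restriction E = (\Psi^1 \restriction \restriction E) - (\Psi^0 \restriction \restriction E)$ and that restriction-to-$E$ followed by taking alternating sums commutes with taking alternating sums followed by restriction-to-$E$. Once these identifications are made, both clauses reduce to the elementary fact that $=^*$ is preserved by restriction, and the proof is complete.
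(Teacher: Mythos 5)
Your argument is correct, and since the paper explicitly leaves the proofs of Proposition \ref{basic_fact_prop} to the reader, there is no paper proof to compare against; your unpacking of the definitions is exactly what the authors had in mind. The only small notational caveat is the edge case $n=2$, where $\Psi^1 - \Psi^0$ is a $1$-family whose trivialization is a single function $\theta : \bigcup\mathcal{I} \to H$ rather than a family indexed by $\delta^{n-2}$, but the same ``restrict to $E$ and use that $=^*$ is preserved'' argument applies verbatim.
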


\begin{proposition} \label{nonequiv_prop}
  Suppose that $1 < n < \omega$, $\delta$ is an ordinal, $H$ is an abelian group, 
  $\langle x_\alpha \mid \alpha < \delta \rangle$ is a sequence of sets, 
  and $\Phi = \langle \varphi_b \colon \bigcap_{i < n} x_{b(i)} 
  \ra H \mid b \in \delta^n \rangle$ is a coherent $n$-family. 
  Suppose also that $\Psi \in \mathrm{Triv}_H(\Phi)$ and $T = \langle \tau_a \colon 
  \bigcap_{i < n-1} x_{a(i)} \ra H \mid a \in \delta^{n-1} \rangle$ is a 
  coherent $(n-1)$-family. Then the family
  \[
    \Psi + T = \langle \psi_v + \tau_v \mid v \in \delta^{n-1} \rangle
  \]
  is in $\mathrm{Triv}_H(\Phi)$. Moreover, we have $\Psi + T \cong_{\Phi,H} \Psi$ 
  if and only if $T$ is trivial.
\end{proposition}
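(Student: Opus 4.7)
The plan is to verify the two assertions in sequence, each of which should follow essentially immediately from unpacking the relevant definitions; I expect no real obstacle here, the content being mainly bookkeeping.

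For the first assertion, that $\Psi + T \in \mathrm{Triv}_H(\Phi)$, I first note that $\Psi + T$ is alternating because both $\Psi$ and $T$ are, and the sum of two alternating families (with the same index set) is alternating. The remaining point is to verify the trivialization identity. Fixing $b \in \delta^n$, I would simply split
\[
  \sum_{i < n} (-1)^i (\psi_{b^i} + \tau_{b^i}) = \sum_{i < n} (-1)^i \psi_{b^i} + \sum_{i < n} (-1)^i \tau_{b^i}.
\]
The first summand equals $\varphi_b$ modulo finite by $\Psi \in \mathrm{Triv}_H(\Phi)$, and the second equals $0$ modulo finite by coherence of the $(n-1)$-family $T$. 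Hence $\sum_{i < n}(-1)^i(\psi_{b^i} + \tau_{b^i}) =^* \varphi_b$, as required.

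For the second assertion, the key observation is purely formal: by definition of $\cong_{\Phi, H}$, the relation $\Psi + T \cong_{\Phi, H} \Psi$ holds precisely when the coherent $(n-1)$-family
\[
  (\Psi + T) - \Psi = \langle (\psi_a + \tau_a) - \psi_a \mid a \in \delta^{n-1} \rangle = T
\]
is trivial (via an $H$-valued trivialization). So the equivalence is literally a restatement of the definition once one notices that the difference family is exactly $T$. Thus both clauses reduce to short computations with no technical difficulty; the only care needed is to keep track of the convention about common domains when summing functions.
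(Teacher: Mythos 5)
Your proof is correct and takes essentially the same approach as the paper: split the trivialization sum, invoke $\Psi \in \mathrm{Triv}_H(\Phi)$ and the coherence of $T$, and observe that the ``moreover'' clause is an immediate unwinding of the definition of $\cong_{\Phi,H}$ via the identity $(\Psi+T)-\Psi = T$. (If anything, you are slightly more careful than the paper's displayed computation, which drops the $(-1)^i$ signs as a typo.)
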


\begin{proof}
  We first verify that $\Psi + T$ is in $\mathrm{Triv}_H(\Phi)$. To this end, fix 
  $b \in \delta^n$. Then we have
  \begin{align*}
    \sum_{i<n} (\psi_{b^i} + \tau_{b^i}) & = \sum_{i < n} \psi_{b^i} + \sum_{i < n} \tau_{b^i} \\ 
    & =^* \varphi_b + 0 = \varphi_b,
  \end{align*}
  as desired. The ``moreover" clause follows immediately from the definition of 
  the equivalence relation $\cong_{\Phi,H}$.
\end{proof}

\begin{definition} \label{defn: ext_def}
  Suppose that 
  \begin{itemize}
    \item $1 < n < \omega$;
    \item $\delta$ is an ordinal;
    \item $\langle x_\alpha \mid \alpha < \delta + 1 \rangle$ is a sequence of sets;
    \item $H$ is an abelian group;
    \item $\Phi = \langle \varphi_b \colon \bigcap_{i < n} x_{b(i)} \ra H \mid 
    b \in \delta^n \rangle$ is a coherent $n$-family;
    \item $\Psi = \langle \psi_a \colon x_\delta \cap \bigcap_{i < n-1} 
    x_{a(i)} \ra H \mid a \in \delta^{n-1} \rangle \in 
    \mathrm{Triv}_H(\Phi \restriction \restriction x_\delta)$.
  \end{itemize}
  Then let $\Phi^\frown \langle \Psi \rangle$ denote the coherent $n$-family 
  $\langle \varphi_b \colon \bigcap_{i < n} x_{b(i)} \ra H \mid b \in (\delta+1)^n 
  \rangle$ extending $\Phi$ defined by letting $\varphi_{a^\frown \langle \delta 
  \rangle} = (-1)^{n+1} \psi_a$ for all $a \in \delta^{n-1}$.
\end{definition}

\begin{remark} \label{remark: coherence}
  Implicit in Definition \ref{defn: ext_def} is the assertion that the $n$-family 
  $\Phi^\frown \langle \Psi \rangle$ is indeed coherent. The verification of this 
  fact is routine and left to the reader.
\end{remark}

\begin{proposition} \label{nonextension_prop}
  Suppose that 
  \begin{itemize}
    \item $1 < n < \omega$;
    \item $\delta$ is an ordinal;
    \item $\langle x_\alpha \mid \alpha < \delta + 1 \rangle$ is a sequence of sets;
    \item $H$ is an abelian group;
    \item $\Phi = \langle \varphi_b \colon \bigcap_{i < n} x_{b(i)} \ra H \mid 
    b \in \delta^n \rangle$ is a coherent $n$-family;
    \item $\Psi \in \mathrm{Triv}_H(\Phi \restriction \restriction 
    x_\delta)$, $\Psi' \in \mathrm{Triv}_H(\Phi)$, and $\Psi  
    \not\cong_{\Phi \restriction \restriction x_\delta, 
    H} \Psi' \restriction \restriction x_\delta$.
  \end{itemize}
  Then $\Psi'$ does not extend to a trivialization of 
  $\Phi ^\frown \langle\Psi\rangle$.
\end{proposition}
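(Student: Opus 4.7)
My plan is to argue by contradiction: suppose $\Psi'$ does extend to an $(n-1)$-family $\Psi''$ indexed along $(\delta+1)^{n-1}$ that trivializes $\Phi ^\frown \langle \Psi \rangle$. The goal is to use the ``new'' part of $\Psi''$ — namely its values on tuples whose top entry is $\delta$ — to construct an $(n-2)$-family trivializing the coherent $(n-1)$-family
\[
  T := \Psi - \Psi' \restriction \restriction x_\delta,
\]
contradicting the hypothesis $\Psi \not\cong_{\Phi \restriction \restriction x_\delta, H} \Psi' \restriction \restriction x_\delta$.

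First, I would note that $T$ makes sense and is coherent: by Proposition \ref{basic_fact_prop}(1), $\Psi' \restriction \restriction x_\delta \in \mathrm{Triv}_H(\Phi \restriction \restriction x_\delta)$, so the preceding proposition yields coherence of $T$; nontriviality of $T$ is exactly the non-equivalence hypothesis.

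Next, I would apply the trivialization equation $\sum_{i<n}(-1)^i\psi''_{b^i} =^* \varphi_b$ for the extended family at strictly increasing tuples of the form $b = a ^\frown \langle \delta \rangle$ with $a \in \delta^{n-1}$ increasing. Since $b^i = (a^i) ^\frown \langle \delta \rangle$ for $i<n-1$ and $b^{n-1} = a$, and since $\psi''_a = \psi'_a$ (as $\Psi''$ extends $\Psi'$), and $\varphi_{a ^\frown \langle \delta \rangle} = (-1)^{n+1}\psi_a$ by Definition \ref{defn: ext_def}, rearranging gives
\[
  (-1)^{n+1}\bigl(\psi_a - \psi'_a\bigr) =^* \sum_{i<n-1}(-1)^i \psi''_{(a^i) ^\frown \langle \delta \rangle}.
\]
Setting $\xi_d := (-1)^{n+1}\psi''_{d ^\frown \langle \delta \rangle}$ for $d \in \delta^{n-2}$ (with the understood domain $x_\delta \cap \bigcap_{i<n-2} x_{d(i)}$) turns this into $\tau_a =^* \sum_{i<n-1}(-1)^i \xi_{a^i}$; that is, $\Xi := \langle \xi_d \mid d \in \delta^{n-2}\rangle$ trivializes $T$, yielding the desired contradiction.

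The only real bookkeeping is to confirm that the signs line up ($(-1)^{n+1} = (-1)^{n-1}$ is what matches the $\psi'_a$ term with the $\psi_a$ term) and that the $\xi_d$'s domains are exactly what is needed: each $\xi_{a^i}$ lives on $x_\delta \cap \bigcap_{j<n-2} x_{a^i(j)}$, and intersecting over $i<n-1$ recovers $x_\delta \cap \bigcap_{i<n-1} x_{a(i)}$, which is precisely the domain of $\tau_a$. No genuine obstacle arises; the alternating property of $\Psi''$ automatically handles tuples that are not strictly increasing, so the above suffices.
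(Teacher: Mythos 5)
Your proposal is correct and follows essentially the same route as the paper's proof: assume by contradiction that an extension $\Psi''$ of $\Psi'$ trivializes $\Phi^\frown\langle\Psi\rangle$, plug the trivialization equation into tuples of the form $a^\frown\langle\delta\rangle$, and use the definition $\varphi_{a^\frown\langle\delta\rangle} = (-1)^{n+1}\psi_a$ to rearrange and obtain an $(n-2)$-family (built from the values $\psi''_{d^\frown\langle\delta\rangle}$) that trivializes $\Psi - \Psi'\restriction\restriction x_\delta$, contradicting the non-equivalence hypothesis. The only thing the paper does slightly differently is to explicitly set aside the case $n=2$ as ``similar but easier,'' whereas you keep a uniform notational framing; this is a cosmetic difference and the substance matches.
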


\begin{proof}
  Assume that $n > 2$. The case $n = 2$ is similar but easier, and left to the reader.
  Let $\Phi ^\frown \langle \Psi \rangle = \langle \varphi_b \mid b \in (\delta+1)^n \rangle$.
  Suppose for the sake of contradiction that $\langle \psi'_a \mid a \in 
  (\delta+1)^{n-1} \rangle$ extends $\Psi'$ and trivializes $\Phi 
  ^\frown \langle\Psi\rangle$. Then, for all $a \in \delta^{n-1}$, we have
  \begin{align*}
    (-1)^{n+1} \psi_a & = \varphi_{a^\frown \langle \delta \rangle} \\ 
    & =^* \sum_{i < n} (-1)^i \psi'_{(a^\frown \langle \delta \rangle)^i} \\ 
    & = (-1)^{n-1} \psi'_{a} + \sum_{i < n-1} \psi'_{a^i{}^\frown \langle 
    \delta \rangle}.
  \end{align*}
  Rearranging the above equation, we see that, for all $a \in \delta^{n-1}$, 
  we have
  \[
    \psi_a - \psi'_a =^* (-1)^{n+1} \sum_{i < n-1} \psi'_{a^i{}^\frown 
    \langle \delta \rangle}.
  \]
  In particular, the family $\langle (-1)^{n+1} \psi'_{e^\frown \langle 
  \delta \rangle} \mid e \in \delta^{n-2} \rangle$ trivializes 
  $\Psi - \Psi' \restriction \restriction 
  x_\delta$, contradicting the fact that $\Psi 
  \not\cong_{\Phi \restriction \restriction x_\delta, 
  H} \Psi' \restriction \restriction x_\delta$.
\end{proof}

\subsection{Derived limits} \label{subsec: derived_limits}

One of the reasons for our interest in coherent $n$-families is their connection with 
the calculation of the derived limits of certain natural inverse systems of abelian groups.

\begin{definition}
  If $(\Lambda, \leq_\Lambda)$ is a directed partial order, then an \emph{inverse system 
  of abelian groups indexed by $\Lambda$} is a structure of the form
  \[
    \mb{B} = \langle B_u, \pi_{uv} \mid u,v \in \Lambda, \ u \leq_{\Lambda} v \rangle
  \]
  such that
  \begin{enumerate}
    \item for all $u \in \Lambda$, $B_u$ is an abelian group;
    \item for all $u \leq_\Lambda v$, $\pi_{uv} \colon B_v \ra B_u$ is a group homomorphism;
    \item for all $u \leq_\Lambda v \leq_\Lambda w$, $\pi_{uw} = \pi_{uv} \circ \pi_{vw}$.
  \end{enumerate}
  Given an inverse system $\mb{B}$, one can form the \emph{inverse limit} $\lim \mb{B} \in \mathsf{Ab}$. 
  Concretely, $\lim \mb{B}$ can be represented as 
  \[
    \{x \in \prod_{u \in \Lambda} B_u \mid \forall u \leq_\Lambda v, \ x(u) = \pi_{uv}(x(v))\}.
  \]
\end{definition}

Given a directed set $\Lambda$, one can form the category $\mathsf{Ab}^{\Lambda^{\mathrm{op}}}$ 
of all inverse systems of abelian groups indexed by $\Lambda$. (Since the details of this and what 
follows will not be necessary for us in this paper, we refer the reader to 
\cite[Section III]{mardesic_book} for precise definitions and proofs of facts presented in the 
remainder of this section.) The inverse limit operation then induces a functor 
$\lim\colon \mathsf{Ab}^{\Lambda^{\mathrm{op}}} \ra \mathsf{Ab}$. This functor is left exact but in 
general not exact; it therefore has (right) derived functors $\lim^n \colon 
\mathsf{Ab}^{\Lambda^{\mathrm{op}}} \ra \mathsf{Ab}$ for $1 \leq n < \omega$. The nontrivial coherent 
families of functions introduced earlier in this section play a key role in the computation 
of the derived limits of a particular class of inverse systems, which we now introduce.

\begin{definition}
  Suppose that $H$ is an abelian group.
  \begin{enumerate}
    \item Given a set $x$, let $A_x[H]$ denote the group $\bigoplus_x H$. Concretely, this 
    is the group of all finitely-supported functions from $x$ to $H$.
    \item Given sets $x \subseteq y$, let $\pi^H_{xy} \colon A_y[H] \ra A_x[H]$ be the natural restriction 
    map. More precisely, if $f \colon y \ra H$ is in $A_y[H]$, then $\pi^H_{xy}(f) = f \restriction x$. 
    Note that this is an abelian group homomorphism. If the group $H$ is clear from context, then 
    we will omit it from the notation.
    \item Suppose that $\mc{I}$ is a collection of sets that is $\subseteq$-directed, i.e., for all 
    $x,y \in \mc{I}$, there is $z \in \mc{I}$ such that $x \cup y \subseteq z$. Then 
    $\mb{A}_{\mc{I}}[H]$ denotes the inverse system of abelian groups 
    \[
      \langle A_x[H], \pi^H_{xy} \mid x,y \in \mc{I}, \ x \subseteq y \rangle.
    \]
  \end{enumerate}
\end{definition}

There is a particular $\subseteq$-directed collection of sets in which we will be especially
interested in this paper. Given a function $f\colon \omega \ra \omega$, let 
$I_f = \{(k,m) \in \omega \times \omega \mid m < f(k)\}$; i.e., $I_f$ is the region ``under the 
graph of $f$" in the plane $\omega \times \omega$. If we omit $\mc{I}$ from the notation 
$\mb{A}_{\mc{I}}[H]$, then $\mc{I}$ should be understood to be the set $\{I_f \mid 
f \in {^{\omega}}\omega\}$. Moreover, given $f \leq g \in {^{\omega}}\omega$, we will typically 
write $A_f[H]$ in place of $A_{I_f}[H]$ and $\pi_{fg}$ in place of $\pi_{I_fI_g}$, i.e., the inverse system 
$\mb{A}[H]$ is the system
\[
  \langle A_f[H], \pi_{fg} \mid f \leq g \in {^{\omega}}\omega \rangle.
\]
We will also omit mention of the group $H$ in case $H = \mathbb{Z}$, e.g., we will write 
$\mb{A}$ for $\mb{A}[\bb{Z}]$.

We can now state the connection between nontrivial coherent $n$-families and the derived limits 
of these inverse systems. For a proof of the fact, see \cite[Section III]{mardesic_book} or 
\cite[Section 2.2]{svhdl}.\footnote{The contents of \cite[Section 2.2]{svhdl} are specifically 
about the system $\mb{A}$, but the arguments therein straightforwardly generalize to 
$\mb{A}_{\mc{I}}[H]$ for any $\subseteq$-directed collection $\mc{I}$ and any abelian group $H$.}

\begin{fact} \label{fact: lim_coh}
  Suppose that $1 \leq n < \omega$, $\mc{I}$ is a $\subseteq$-directed collection of sets, and 
  $H$ is an abelian group. Then the following are equivalent:
  \begin{enumerate}
    \item $\lim^n \mb{A}_{\mc{I}}[H] = 0$;
    \item every coherent $H$-valued $n$-family indexed along $\mc{I}$ is trivial.
  \end{enumerate}
\end{fact}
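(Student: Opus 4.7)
The plan is to deduce the fact from a standard cochain-complex computation of the derived functors $\lim^n$. For any directed set $(\Lambda,\leq)$ and inverse system $\mathbf{B}\in\mathsf{Ab}^{\Lambda^{\mathrm{op}}}$, one has the Roos complex $C^\bullet(\mathbf{B})$, whose $n$-th term is the product of $B_{u(0)}$ over chains $u(0)\leq u(1)\leq\cdots\leq u(n)$ in $\Lambda$, with the standard alternating-sum coboundary built from the bonding maps, and whose cohomology computes $\lim^n \mathbf{B}$. The first step in my proof would be to pass to an alternating reindexed variant: a quasi-isomorphic subcomplex whose $n$-th term is indexed by arbitrary tuples $u\in\Lambda^n$ and whose sections transform by the sign of a permutation under reordering of the entries of $u$.

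The heart of the proof is then to specialize to $\mathbf{B}=\mb{A}_{\mc{I}}[H]$ and exploit the fact that $A_x[H]=\bigoplus_x H$ is literally the group of finitely supported functions $x\to H$, with bonding map $\pi^H_{xy}$ equal to restriction of functions. Under this identification, a section of the alternating complex above corresponds to an $H$-valued alternating $n$-family $\langle \varphi_u : \bigcap_{i<n}u(i)\to H\mid u\in\mc{I}^n\rangle$, with each $\varphi_u$ finitely supported on its domain. The crucial observation is that two such families agree in the codomain $A_{\bigcap v(i)}[H]$ precisely when they agree off a finite subset of $\bigcap_{i<n+1} v(i)$. Consequently, the cocycle equation $\delta\varphi=0$ translates exactly to $\sum_{i<n+1}(-1)^i\varphi_{v^i}=^*0$, i.e.\ coherence, while the coboundary equation $\varphi=\delta\psi$ translates exactly to $\varphi_u=^*\sum_{i<n}(-1)^i\psi_{u^i}$, i.e.\ triviality. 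The case $n=1$ requires a minor separate check, since there a trivialization is a single function $\psi:\bigcup\mc{I}\to H$ rather than a tuple of functions; but the same mechanism of identifying cochains with functions modulo finite support applies.

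With this dictionary in place, the fact follows at once: on one hand, the quotient of cocycles by coboundaries computes $\lim^n\mb{A}_{\mc{I}}[H]$; on the other, it is the collection of coherent $n$-families modulo trivial ones, so the former vanishes iff every coherent $n$-family is trivial. The main technical obstacle is the bookkeeping in the first step: one must verify that the reindexed alternating model really is quasi-isomorphic to the Roos complex in this generality, and then align sign conventions so that the alternating-sum coboundary matches the signed sums appearing in the definitions of coherence and triviality. As the excerpt notes, these details are standard and are carried out carefully in \cite[Section III]{mardesic_book} and \cite[Section 2.2]{svhdl}, so in a formal writeup I would state the above identification as a dictionary and cite those references rather than rederiving the complex from scratch.
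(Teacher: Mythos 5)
There is a genuine gap. Your plan is to read $\lim^n\mb{A}_{\mc I}[H]$ directly off the Roos complex of $\mb{A}_{\mc I}[H]$ and then match cocycles with coherent families. But in that complex an $n$-cochain takes values in $A_{u(0)}[H]=\bigoplus_{u(0)}H$, i.e.\ it is a \emph{finitely supported} function, and the cocycle condition $\delta\varphi=0$ is \emph{literal} equality in $A_{u(0)}[H]$. Your ``crucial observation'' — that two elements of $A_{\bigcap v(i)}[H]$ ``agree'' precisely when they agree off a finite set — is false: two finitely supported functions agree in $\bigoplus H$ iff they agree everywhere, whereas agreement off a finite set is automatic for such functions. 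Consequently the cocycle equation does \emph{not} translate into the paper's coherence relation $\sum_i(-1)^i\varphi_{v^i}=^*0$, and the coboundary equation does not translate into triviality. In fact, if you insist that the $\varphi_u$ be finitely supported, then every coherent family in the paper's sense is trivially trivialized by $\Psi=0$, so the purported dictionary collapses. There is also an indexing mismatch: the $n$-th Roos cochain group is indexed by chains of length $n{+}1$, not by $\Lambda^n$, so the alternating model you describe is off by one degree.

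The standard route (and the one carried out in the references the paper cites) repairs both defects at once by introducing the \emph{quotient} system. Let $\mb{B}_{\mc I}[H]$ be the inverse system whose $x$-th group is $H^x$ (all functions, not just finitely supported ones) with restriction bonding maps, and let $\mb{C}=\mb{B}/\mb{A}$ be the quotient, whose $x$-th group is $H^x/\bigoplus_x H$, i.e.\ functions modulo finite. The system $\mb{B}$ is acyclic ($\lim^k\mb{B}=0$ for $k\ge 1$), so the long exact sequence attached to $0\to\mb{A}\to\mb{B}\to\mb{C}\to0$ yields $\lim^n\mb{A}\cong\lim^{n-1}\mb{C}$ for $n\ge 2$, and $\lim^1\mb{A}\cong\operatorname{coker}\bigl(\lim\mb{B}\to\lim\mb{C}\bigr)$. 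It is for $\mb{C}$, not $\mb{A}$, that the $(n{-}1)$-th Roos cochain group is (after alternating reindexing) a family $\langle\varphi_u\mid u\in\mc I^n\rangle$ of functions-mod-finite, the cocycle relation becomes the mod-finite coherence relation, and the coboundary relation becomes triviality; lifting cosets to genuine functions gives exactly the paper's ``coherent $n$-family'' picture. This degree shift and the pass to the quotient are precisely the two ingredients missing from your sketch, and they are the reason the $=^*$ appears in the definitions at all.
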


Fact \ref{fact: lim_coh}, together with Propositions \ref{prop: cofinal_subset} and 
\ref{prop: extension} yield the following observation.

\begin{fact} \label{fact: cofinal_lim}
  Suppose that $1 \leq n < \omega$, $\mc{I}$ is a $\sub$-directed collection of sets, 
  $\mc{J}$ is a $\sub^*$-cofinal subset of $\mc{I}$, 
  and $H$ is an abelian group. Then the following are equivalent:
  \begin{enumerate}
    \item $\lim^n \mb{A}_{\mc{I}}[H] = 0$;
    \item every coherent $H$-valued $n$-family indexed along $\mc{J}$ is trivial.
  \end{enumerate}
\end{fact}

\section{Ascending sequences and nontrivial coherence} \label{sec: ascending}

In this section, we isolate the notion of an \emph{ascending sequence} of sets and prove 
that under certain assumptions one can recursively construct nontrivial coherent $n$-families 
along them.

For a nonempty set $Y$, we let $\Fn^+(Y)$ denote the collection of all finite partial 
functions $w$ from $Y$ to $2$ such that $w^{-1}\{1\} \neq \emptyset$.

\begin{definition}
  Suppose that $\vec{x} = \langle x_\alpha \mid \alpha < \delta \rangle$ 
  is a sequence of sets, $1 \leq n < \omega$, $H$ is an abelian group, and $\Phi = 
  \langle \varphi_a \colon \bigcap_{i < n} x_{a(i)} \ra H \mid a \in \delta^n 
  \rangle$ is an alternating $n$-family. 
  \begin{enumerate}
    \item If $E$ is a set, then we say 
    that $\Phi$ is \emph{supported on $E$} if $\supp(\varphi_a) \subseteq E$
    for all $a \in \delta^n$.
    \item Suppose that $\vec{e} = \langle e_w \mid w \in \Fn^+(\delta) \rangle$ 
    is a sequence of sets. For all $a \in \delta^n$, let 
    $\beta_a := \max\{a(i) \mid i < n\}$. We say that $\Phi$ is \emph{supported on 
    $\vec{e}$} if for all $a \in \delta^n$, we have $\supp(\varphi_a) 
    \subseteq \bigcup\{e_w \mid w \in \Fn^+(\beta_a)\}$.
  \end{enumerate}      
\end{definition}

The following proposition is immediate.

\begin{proposition}
  Suppose that $1 < n < \omega$, $H$ is an abelian group, $\Phi$ is an $H$-valued 
  coherent $n$-family, and $\Psi^0, \Psi^1 \in \mathrm{Triv}_H(\Phi)$. If $E$ is 
  a set, $\Psi^0$ and $\Psi^1$ are both supported on $E$, and $\Psi^0 \restriction 
  \restriction E \cong_{\Phi \restriction \restriction E, H} \Psi^1 \restriction 
  \restriction E$, then $\Psi^0 \cong_{\Phi, H} \Psi^1$. \qed
\end{proposition}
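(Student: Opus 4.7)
The plan is to unpack the hypothesis via the difference $T := \Psi^1 - \Psi^0$ and then trivialize $T$ by an ``extend by zero'' construction. First I would observe that, since $\Psi^0$ and $\Psi^1$ are both supported on $E$, the coherent $(n-1)$-family $T$ is also supported on $E$: for each $a \in \delta^{n-1}$, the pointwise difference has $\supp(\tau_a) \subseteq \supp(\psi^1_a) \cup \supp(\psi^0_a) \subseteq E$. The hypothesis $\Psi^0 \restriction\restriction E \cong_{\Phi \restriction\restriction E, H} \Psi^1 \restriction\restriction E$ unpacks, by the definition of $\cong$, to the statement that the $(n-1)$-family $T \restriction\restriction E$ is trivial. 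Fix an $(n-2)$-family $\Theta = \langle \theta_c \mid c \in \delta^{n-2}\rangle$ witnessing this, where each $\theta_c$ has domain $E \cap \bigcap_{i<n-2} x_{c(i)}$ (here $\vec{x} = \langle x_\alpha \mid \alpha < \delta\rangle$ is the sequence along which $\Phi$ is indexed).

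The key step is to extend $\Theta$ to the full indexing by setting
\[
  \theta'_c(x) = \begin{cases} \theta_c(x) & \text{if } x \in E \cap \bigcap_{i<n-2} x_{c(i)} \\ 0 & \text{if } x \in \bigcap_{i<n-2} x_{c(i)} \setminus E. \end{cases}
\]
Then $\Theta' = \langle \theta'_c \mid c \in \delta^{n-2}\rangle$ inherits the alternating property from $\Theta$ since extending by $0$ commutes with applying a sign. I would then verify that $\Theta'$ trivializes $T$ as an $(n-1)$-family indexed along $\vec{x}$. Fix $a \in \delta^{n-1}$ and compare $\tau_a$ with $\sum_{i<n-1}(-1)^i \theta'_{a^i}$ on $\bigcap_{i<n-1} x_{a(i)}$: on points of $E$ the two agree modulo finitely many exceptions by the defining property of $\Theta$, while on points outside $E$ the left-hand side vanishes because $T$ is supported on $E$ and the right-hand side vanishes exactly because each $\theta'_{a^i}$ was extended by $0$. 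Hence $\tau_a =^* \sum_{i<n-1}(-1)^i \theta'_{a^i}$, showing $T$ is trivial, i.e.\ $\Psi^0 \cong_{\Phi, H} \Psi^1$.

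I do not expect any genuine obstacle here: the argument is essentially formal, and the only content is the observation that the joint support condition permits the zero-extension to interact cleanly with the mod-finite equality defining triviality.
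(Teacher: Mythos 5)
Your argument is correct, and it is essentially the unique natural route: the paper actually marks this proposition with a $\qed$ in the statement and calls it ``immediate,'' so it provides no proof at all. Your write-up supplies exactly the details one would want: the difference $T = \Psi^1 - \Psi^0$ inherits the support condition, the hypothesis says $T \restriction\restriction E$ is trivial, and the extension-by-zero of a trivialization of $T\restriction\restriction E$ works because on points of $E$ the mod-finite agreement is inherited while off $E$ both sides vanish identically. The only point worth a remark is the degenerate case $n = 2$, where the ``$(n-2)$-family'' trivializing the $1$-family $T$ is a single function $\theta : E \cap \bigcup_\alpha x_\alpha \to H$ (indexed by the empty sequence); your argument goes through verbatim there, reading $\sum_{i<n-1}(-1)^i \theta'_{a^i}$ as $\theta'$ restricted to $x_{a(0)}$, but it is worth being explicit that the indexing $c \in \delta^{n-2}$ makes sense for $n = 2$ by convention.
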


\begin{definition} \label{ascending_def}
  Suppose that $\delta$ is a limit ordinal of uncountable cofinality and 
  $\vec{x} = \langle x_\alpha \mid \alpha < \delta \rangle$ is a sequence 
  of sets.
  \begin{enumerate}
    \item For $w \in \Fn^+(\delta)$, we say that \emph{$\vec{x}$ respects 
    $w$} if the set
    \[
      d^{\vec{x}}_w := \bigcap_{\beta \in w^{-1}\{1\}} x_\beta
      \mathbin{\big\backslash} \bigcup_{\alpha \in w^{-1}\{0\}} x_\alpha
    \]
    is infinite.
    \item We say that $\vec{x}$ is \emph{ascending} if there is a sequence 
    $\langle e_w \mid w \in \Fn^+(\delta) \rangle$ and a club $C \subseteq 
    \delta$ satisfying the following requirements:
    \begin{enumerate}
      \item for all $w \in \Fn^+(\delta)$, if $\vec{x}$ respects $w$, then 
      $e_w \in [d^{\vec{x}}_w]^{\aleph_0}$ (otherwise $e_w = \emptyset$);
      \item \label{item_2b} for all $\beta \in C$ and all $a \in [\beta]^{<\omega}$, if $w \in \Fn^+(\delta)$ 
      is defined by setting 
      \[
        w(\alpha) = \begin{cases}
          0 & \text{if } \alpha \in a \\ 
          1 & \text{if } \alpha = \beta,
        \end{cases}
      \]
      then $\vec{x}$ respects $w$, i.e.,
      $x_\beta \not\subseteq^* \bigcup_{\alpha \in a} x_\alpha$;
      \item \label{item_2c} for all nonempty $a \in [C]^{<\omega}$ and all $w \in \Fn^+(\min(a))$, 
      if $\vec{x}$ respects $w$, then the set
      \[
        e_w \cap \bigcap_{\beta \in a} x_\beta
      \]
      is infinite. In particular, for all $\beta \in C$ and all $w \in \Fn^+(\beta)$, if 
      $\vec{x}$ respects $w$, then $\vec{x}$ respects $w \cup \{(\beta,1)\}$.
    \end{enumerate}
  \end{enumerate}
\end{definition}

\begin{remark} \label{remark_ascending_ex}
  Note that, as a consequence of clauses (\ref{item_2b}) and (\ref{item_2c}) of Definition \ref{ascending_def}, 
  if $\vec{x} = \langle x_\alpha \mid \alpha < \delta \rangle$ is ascending as witnessed by 
  a club $C \subseteq \delta$, then, for all $w \in \Fn^+(\delta)$ such that 
  $w^{-1}\{1\} \subseteq C$ and $w$ is \emph{weakly increasing} (i.e., $w^{-1}\{0\} 
  < w^{-1}\{1\}$), $\vec{x}$ respects $w$.
  
  To help illustrate Definition \ref{ascending_def}, we provide here, as suggested by the 
  referee, a couple of concrete examples of ascending sequences of sets; 
  we leave the routine verification that each of these examples is ascending as 
  an exercise for the reader. First, if $\vec{x}$ is any strictly $\subseteq^*$-increasing 
  sequence of sets, then $\vec{x}$ is ascending. Second, if 
  $\vec{x}$ enumerates any independent family of elements of $[\omega]^\omega$, then 
  $\vec{x}$ is ascending. Note that these examples occupy opposite extremes with respect to 
  the functions that are respected by the sequence: if $\vec{x} = 
  \langle x_\alpha \mid \alpha < \delta \rangle$ is a sequence of sets, then
  \begin{itemize}
    \item $\vec{x}$ is $\subseteq^*$-increasing if and only if $\vec{x}$ respects precisely 
    those elements of $\Fn^+(\delta)$ that are weakly increasing; and
    \item if $\vec{x}$ is a sequence from $[\omega]^\omega$, $\vec{x}$ enumerates an 
    independent family if and only if $\vec{x}$ respects \emph{all} 
    elements of $\Fn^+(\delta)$.
  \end{itemize}
\end{remark}

We now prove that, for $1 \leq n < \omega$, ascending sequences of length $\omega_n$ carry 
nontrivial coherent $n$-families. We first prove the existence of $n$-families mapping 
into arbitrary sufficiently small nonzero abelian groups under an additional weak 
diamond assumption. We will later eliminate this weak diamond assumption, at the cost of 
increasing the size of the groups that we are mapping into. Before beginning, let us 
recall the definition of weak diamond.

\begin{definition} \label{def: wkd}
Let $\lambda$ be a regular uncountable cardinal and let $S \sub \lambda $ be stationary in $\lambda$. The principle $\wl(S)$ asserts that for every $F\colon 2^{< \lambda} \to 2$ there exists $g\colon \lambda \to 2$ such that for every $b\colon \lambda \to 2$ the set 
\[ \lbrace \alpha \in S \mid g(\alpha) = F(b \restriction \alpha) \rbrace  \]
is stationary in $\lambda$.  
\end{definition}

We begin with the special case of $n = 1$.

\begin{theorem} \label{thm: 1d_ascending}
  Suppose that $\vec{x} = \langle x_\alpha \mid \alpha < \omega_1 \rangle$ 
  is an ascending sequence of sets, $H$ is a nonzero abelian group, and $\wkd(\omega_1)$ holds. 
  Then there is a nontrivial coherent 
  $1$-family $\Phi = \langle \varphi_\alpha \colon x_\alpha \ra H \mid \alpha 
  < \omega_1 \rangle$. 
  
  Moreover, if $\vec{e} = \langle e_w \mid w \in \Fn^+(\omega_1) 
  \rangle$ and $C$ witness that $\vec{x}$ is ascending, then we 
  can arrange so that $\Phi$ is supported on $\vec{e}$.
\end{theorem}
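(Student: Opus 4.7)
The plan is to construct $\Phi$ by transfinite recursion on $\alpha<\omega_1$, using $\wkd(\omega_1)$ to defeat every candidate trivializer $\psi:\bigcup_{\alpha<\omega_1}x_\alpha\to H$. Fix a nonzero $h\in H$ and the witnesses $\vec{e}=\langle e_w\mid w\in\Fn^+(\omega_1)\rangle$ and $C\subseteq\omega_1$ to the ascending property. First, I would set up a coloring $F:2^{<\omega_1}\to 2$ via a fixed bookkeeping scheme, so that branches $b:\omega_1\to 2$ encode, for each $\alpha<\omega_1$, the history $\Phi\restriction\alpha$ together with a reduction of a candidate partial trivializer $\psi\restriction\bigcup_{\beta<\alpha}x_\beta$ to $\{0,h\}$-valued data. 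The bit $F(b\restriction\alpha)$ records one piece of information about the behavior of $\psi$ on a canonically associated infinite test set $T_\alpha\subseteq e_{w_\alpha}\cap x_\alpha$, where $w_\alpha\in\Fn^+(\alpha)$ is a respected element picked canonically from $b\restriction\alpha$. Applying $\wkd(\omega_1)$ yields a guessing function $g:\omega_1\to 2$.

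For the recursion itself, at each $\alpha\notin C$ I would define $\varphi_\alpha$ by the standard ``first appearance'' coherent-extension procedure of Proposition \ref{prop: goblot_var}: enumerate $\alpha=\{\beta_k\mid k<\omega\}$, set $z_k=(x_{\beta_k}\cap x_\alpha)\setminus\bigcup_{j<k}x_{\beta_j}$, let $\varphi_\alpha\restriction z_k=\varphi_{\beta_k}\restriction z_k$, and take $\varphi_\alpha=0$ on $x_\alpha\setminus\bigcup_{k<\omega}x_{\beta_k}$. A routine check confirms that this $\varphi_\alpha$ is coherent with every earlier $\varphi_\beta$ and inherits the support invariant $\supp(\varphi_\alpha)\subseteq\bigcup\{e_w\mid w\in\Fn^+(\alpha)\}$. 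At each $\alpha\in C$ I would use the ascending structure to produce two coherent extensions $\varphi^0_\alpha,\varphi^1_\alpha$ of $\Phi\restriction\alpha$, both supported on $\bigcup\{e_w\mid w\in\Fn^+(\alpha)\}$, whose difference has infinite support inside $T_\alpha$, and set $\varphi_\alpha:=\varphi^{g(\alpha)}_\alpha$.

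Nontriviality is then established by the usual weak diamond argument: given any trivializer $\psi$, code its relevant data into a branch $b_\psi:\omega_1\to 2$; by $\wkd(\omega_1)$ the set $\{\alpha<\omega_1\mid g(\alpha)=F(b_\psi\restriction\alpha)\}$ is stationary and meets $C$; at any such $\alpha\in C$ the construction was designed so that $\varphi_\alpha$ disagrees with $\psi\restriction x_\alpha$ on an infinite subset of $T_\alpha$, contradicting $\psi\restriction x_\alpha=^*\varphi_\alpha$.

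The hard part is producing the two coherent extensions at $\alpha\in C$. A naive perturbation of $\varphi^0_\alpha$ by adding $h$ on an infinite $T_\alpha\subseteq e_{w_\alpha}\cap x_\alpha$ would break coherence with $\varphi_\beta$ for $\beta\in w_\alpha^{-1}\{1\}$, since $e_{w_\alpha}\subseteq x_\beta$ would then force infinite disagreement on $x_\alpha\cap x_\beta$. I would overcome this by pre-planning the construction globally: rather than introducing the modification only at stage $\alpha$, build each $\varphi_\beta$ as a coherent aggregation of base contributions $\chi_w:e_w\to H$ indexed by respected $w$ with $\beta\in w^{-1}\{1\}$, so that a modification ``switched on'' at $\alpha$ is automatically reflected on every $\varphi_\beta$ with $\beta\in w_\alpha^{-1}\{1\}$; coherence is then immediate on each $e_w$-cell, and condition (c) of the ascending definition guarantees that each switched-on $\chi_{w_\alpha}$ has an infinite footprint in $T_\alpha\subseteq e_{w_\alpha}$ that is not forced by the previously decided $\chi_w$'s, giving the needed freedom for $\wkd(\omega_1)$ to act.
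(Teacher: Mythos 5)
Your high-level plan---transfinite recursion with $\wkd(\omega_1)$ selecting between two coherent extensions at each relevant stage---matches the paper's strategy. But the key technical step, which you yourself flag as the hard part, is mishandled, and the fix you propose is not the one that works.

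You correctly observe that perturbing $\varphi_\alpha$ by $h$ on an infinite $T_\alpha \subseteq e_{w_\alpha} \cap x_\alpha$ for a \emph{single} respected $w_\alpha$ would destroy coherence, since $e_{w_\alpha} \subseteq x_\beta$ for every $\beta \in w_\alpha^{-1}\{1\}$, so the perturbation lands entirely inside $x_\alpha \cap x_\beta$. Your fix---decomposing each $\varphi_\beta$ as an aggregate of cell functions $\chi_w : e_w \to H$ and ``retroactively reflecting'' a switched-on $\chi_{w_\alpha}$ onto the earlier $\varphi_\beta$'s---is not a recursion (the earlier $\varphi_\beta$'s are already fixed when stage $\alpha$ arrives), and in any case you never verify that the resulting sums are locally finite or that coherence is preserved across the aggregation. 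As written, the proposal has a genuine gap exactly at the place you identified as the crux.

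The paper's fix is different and much simpler: diagonalize the test set across \emph{infinitely many} of the $e_w$'s, chosen so that its intersection with every earlier $x_\alpha$ is finite. Concretely, for $\gamma \in \Lim(C)$ fix a bijection $\pi_\gamma : \gamma \to \omega$ and an increasing cofinal $\langle \gamma_n \mid n < \omega \rangle$ in $C \cap \gamma$ with $\pi_\gamma(\gamma_n)$ increasing; define $w_{\gamma,n} \in \Fn^+(\gamma)$ by $w_{\gamma,n}(\gamma_n) = 1$ and $w_{\gamma,n}(\alpha) = 0$ for all $\alpha < \gamma_n$ with $\pi_\gamma(\alpha) < \pi_\gamma(\gamma_n)$. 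Clause (2)(c) of the ascending definition gives $e_{w_{\gamma,n}} \cap x_\gamma \neq \emptyset$; pick $i_{\gamma,n}$ from this set and put $y_\gamma = \{i_{\gamma,n} \mid n < \omega\}$. Since every fixed $\alpha < \gamma$ receives $w_{\gamma,n}(\alpha) = 0$ for all sufficiently large $n$, the set $y_\gamma \cap x_\alpha$ is finite for all $\alpha < \gamma$. Hence at stage $\gamma$ one may take a Goblot-style trivializer $\psi : x_\gamma \to H$ and set $\varphi^\ell_\gamma$ equal to $\psi$ off $y_\gamma$ and to $\ell_H$ on $y_\gamma$: coherence with all earlier $\varphi_\alpha$'s is automatic (the perturbation is finite on each $x_\alpha \cap x_\gamma$), the support requirement is maintained, and the two choices differ infinitely on the pre-specified set $y_\gamma$. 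This is exactly the freedom the weak diamond coloring needs. One further caution: your $F$ must be independent of $g$ for $\wkd(\omega_1)$ to be applicable; the paper achieves this by first building the full binary tree $\langle \Phi^\sigma \mid \sigma \in {}^{<\omega_1}2 \rangle$ and having $F$ read only the candidate trivializer's coded values on the fixed sets $y_\gamma$, never the construction history. Your phrase ``encode \ldots the history $\Phi \restriction \alpha$'' suggests a circularity that needs to be removed.
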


\begin{proof}
  Fix for the duration of the proof an arbitrary nonzero element of $H$, and denote it by $1_H$.
  We will denote the zero element of $H$ by $0_H$.
  For each $\sigma \in {^{<\omega_1}}2$, we will construct a coherent $H$-valued $1$-family 
  $\Phi^\sigma = \langle \varphi^\sigma_\alpha \mid \alpha < \lh(\sigma) \rangle$ 
  in such a way that if $\sigma \sqsubseteq \tau \in {^{<\omega_1}}2$, then 
  $\Phi^\sigma \sqsubseteq \Phi^\tau$.
  We will then find some $g \in {^{\omega_1}}2$ 
  for which the family $\Phi^g := \bigcup_{\beta < \omega_1} \Phi^{g \restriction 
  \beta}$ is nontrivial.
  
  Let $\vec{e} = \langle e_w \mid w \in \Fn^+(\omega_1) \rangle$ and $C \subseteq 
  \omega_1$ witness that $\vec{x}$ is ascending. Before we begin the 
  construction of $\langle \Phi^\sigma \mid \sigma \in {^{<\omega_1}}2 \rangle$, 
  we isolate, for each $\gamma \in \Lim(C)$, a relevant set 
  $y_\gamma \in [x_\gamma]^{\aleph_0}$.
  
  Fix $\gamma \in \Lim(C)$, as well as a bijection $\pi_\gamma \colon \gamma 
  \ra \omega$. Let $\langle \gamma_n \mid n < \omega \rangle$ be an 
  increasing cofinal sequence from $C \cap \gamma$ such that 
  $\langle \pi_\gamma(\gamma_n) \mid n < \omega \rangle$ is also increasing.
  For each $n < \omega$, define a function $w_{\gamma,n} \in \Fn^+(\gamma)$ by 
  letting 
  \[ 
    \dom(w_{\gamma,n}) := \{\gamma_n\} \cup \{\alpha < \gamma_n \mid 
    \pi_\gamma(\alpha) < \pi_\gamma(\gamma_n)\}
  \]
  and setting $w_{\gamma,n}(\gamma_n) := 1$ and $w_{\gamma,n}(\alpha) := 0$ for all 
  $\alpha \in \dom(w_n) \cap \gamma_n$. Then let $w_{\gamma,n}^+ := 
  w_{\gamma,n} \cup \{(\gamma,1)\}$. The fact that $\gamma_n < \gamma$ are both 
  in $C$ implies that $\vec{x}$ respects both $w_{\gamma,n}$ and 
  $w^+_{\gamma,n}$ and that $x_\gamma \cap e_{w_{\gamma,n}}$ is infinite.
  Note also that, for all $m < n < \omega$, we have $e_{w_{\gamma,m}} \cap 
  e_{w_{\gamma,n}} = \emptyset$, since $w_{\gamma,n}(\gamma_m) = 0$. 
  Now, for each $n < \omega$, choose a single element $i_{\gamma,n} \in 
  x_\gamma \cap e_{w_{\gamma,n}}$, and let 
  $y_\gamma := \{i_{\gamma,n} \mid n < \omega\}$. 
  Note that, by construction, the set $y_\gamma \cap x_\alpha$ is finite for 
  all $\alpha < \gamma$, since $w_{\gamma,n}(\alpha) = 0$ for all 
  sufficiently large $n < \omega$.
  
  We now turn to the construction of $\langle \Phi^\sigma \mid 
  \sigma \in {^{<\omega_1}}2 \rangle$. We will ensure throughout the 
  construction that each $\Phi^\sigma$ is supported on $\vec{e}$.
  If $\sigma \in {^{<\omega_1}}2$ and $\lh(\sigma)$ is a limit ordinal, then 
  we are obliged to set $\Phi^\sigma := \bigcup_{\beta < \lh(\sigma)} 
  \Phi^{\sigma \restriction \beta}$. Thus, to complete the construction, it suffices 
  to specify how to produce $\Phi^{\sigma^\frown \langle 0 \rangle}$ and 
  $\Phi^{\sigma^\frown \langle 1 \rangle}$ from $\Phi^\sigma$. To this 
  end, fix $\gamma < \omega_1$ and $\sigma \in {^\gamma}2$, and suppose 
  that we have constructed $\Phi^\sigma$. For readability, 
  we will write $\Phi^{\sigma,\ell}$ in place of $\Phi^{\sigma^{\frown} 
  \langle \ell \rangle}$ for $\ell < 2$.
  
  Fix $\ell < 2$. To construct $\Phi^{\sigma, \ell}$, it 
  is enough to define $\varphi^{\sigma, \ell}_\gamma$. 
  To begin, since $\gamma < \omega_1$, we can use Proposition \ref{prop: goblot_var} to 
  find a function $\psi \colon x_\gamma \ra H$ such that $\psi =^* 
  \varphi^\sigma_\alpha$ for all $\alpha < \gamma$. Moreover, since $\Phi^\sigma$ 
  is supported on $\vec{e}$, we can assume that the support of $\psi$ 
  is a subset of $\bigcup \{e_w \mid w \in \Fn^+(\gamma)\}$. If 
  $\gamma \notin \Lim(C)$, then simply let $\varphi^{\sigma, \ell}_\gamma = \psi$. If $\gamma \in \Lim(C)$, then define 
  $\varphi^{\sigma, \ell}_\gamma$ by setting
  \[
    \varphi^{\sigma, \ell}_\gamma(x) = 
    \begin{cases}
      \ell_H & \text{if } x \in y_\gamma \\
      \psi(x) & \text{if } x \in x_\gamma \setminus y_\gamma.
    \end{cases}
  \]
  Note that we still have $\varphi^{\sigma, \ell}_\gamma 
  =^* \varphi^\sigma_\alpha$ for all $\alpha < \gamma$, due to the fact 
  that $y_\gamma \cap x_\alpha$ is finite for all such $\alpha$. We have 
  also satisfied the requirement that each $\Phi^{\sigma, \ell}$ is 
  supported on $\vec{e}$. This completes the construction.
  
  We now prepare for an application of $\wkd(\omega_1)$. Let 
  $E := \bigcup\{e_w \mid w \in \Fn^+(\omega_1)\}$, and note that 
  $|E| \leq \aleph_1$ and $y_\gamma \subseteq E$ for all $\gamma 
  \in \Lim(C)$. Let $\langle j_\alpha \mid \alpha < |E| \rangle$ 
  be an injective enumeration of $E$. For $\gamma < 
  \omega_1$, let $E_\gamma := \{j_\alpha \mid \alpha < \gamma\}$. Let 
  $D$ be the set of $\gamma \in \Lim(C)$ such that, for all $w \in 
  \Fn^+(\gamma)$, we have $e_w \subseteq E_\gamma$, and note that $D$ is 
  club in $\omega_1$. Additionally note that, by construction, for each 
  $\gamma \in D$ we have $y_\gamma \subseteq E_\gamma$, and hence every 
  $\tau \in {^\gamma}2$ induces a function $h_\tau \colon y_\gamma \ra 2$ by 
  setting $h_\tau(j_\alpha) = \tau(\alpha)$ for all $\alpha < \gamma$ 
  such that $j_\alpha \in y_\gamma$.
  
  Now define a function $F\colon{^{<\omega_1}}2 \ra 2$ by setting $F(\tau) := 1$ 
  for all $\tau$ such that $\gamma := \lh(\tau) \in D$ and 
  $h_\tau =^* 0$, and setting $F(\tau) := 0$ for all other $\tau \in 
  {^{<\omega_1}}2$. Let $g \in {^{\omega_1}}2$ witness the instance of 
  $\wkd(\omega_1)$ associated with $F$. We claim that $\Phi^g = \bigcup_{\beta
  < \omega_1} \Phi^{g \restriction \beta} = 
  \langle \varphi_\alpha \mid \alpha < \omega_1 \rangle$ is nontrivial. 
  Suppose for the sake of contradiction that $\psi$ trivializes $\Phi^g$. 
  Define a function $f \in {^{\omega_1}}2$ by setting, for all $\alpha < \omega_1$,
  \[
    f(\alpha) = \begin{cases}
      0 & \text{if } \psi(j_\alpha) = 0_H \\ 
      1 & \text{otherwise.}
    \end{cases}
  \]  
  By our choice of $g$, 
  we can find $\gamma \in D$ such that $g(\gamma) = F(f \restriction \gamma)$.
  
  Let $\ell := g(\gamma)$ and $\tau := f \restriction \gamma$. By the construction 
  of $\Phi^g$, we know that $\varphi_\gamma \restriction y_\gamma =^* \ell_H$.
  By the assumption that $\psi$ trivializes $\Phi^g$, it follows 
  that $\psi \restriction y_\gamma =^* \ell_H$. Our definition of 
  $h_\tau$ then implies that $h_\tau =^* \ell$, but then our definition of 
  $F$ implies that $F(\tau) = 1-\ell$, contradicting the fact that 
  $g(\gamma) = F(f \restriction \gamma)$ and completing the proof. 
\end{proof}

We now turn to higher dimensions.

\begin{theorem} \label{thm: ascending}
  Suppose that $1 \leq n < \omega$, $\vec{x} = \langle x_\alpha \mid \alpha < \omega_n 
  \rangle$ is an ascending sequence of sets, $H$ is a nonzero abelian group with 
  $|H| \leq 2^{\omega_1}$, and $\wkd(S^{k+1}_k)$ holds for all $k < n$. 
  Then there is a nontrivial coherent $n$-family
  \[
    \Phi = \left\langle \varphi_b \colon \bigcap_{i < n} x_{b(i)} \rightarrow H ~ \middle| ~ b 
    \in (\omega_n)^n \right\rangle.
  \]
  Moreover, if $\vec{e} = \langle e_w \mid w \in \Fn^+(\omega_n) \rangle$ and 
  $C \subseteq \omega_n$ witness that $\vec{x}$ is ascending, then we can arrange 
  so that $\Phi$ is supported on $\vec{e}$. 
\end{theorem}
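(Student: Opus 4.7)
The proof proceeds by induction on $n$, with the base case $n = 1$ provided by Theorem \ref{thm: 1d_ascending}. For the inductive step, fix $n \geq 2$ and assume the theorem holds for $n - 1$. Fix the ascending sequence $\vec{x}$ with witnesses $\vec{e}, C$, a nonzero abelian group $H$ with $|H| \leq 2^{\omega_1}$, and the weak diamond principles $\wkd(S^{k+1}_k)$ for $k < n$.

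Mirroring the strategy of the base case, the plan is to recursively construct a tree of coherent $H$-valued $n$-families $\langle \Phi^\sigma \mid \sigma \in 2^{<\omega_n} \rangle$ with $\Phi^\sigma$ indexed along $\vec{x} \restriction \lh(\sigma)$, supported on $\vec{e}$, and compatible under $\sqsubseteq$. Limit stages are handled by unions. At successor stages $\gamma + 1$ not belonging to a designated bifurcation set, Proposition \ref{prop: goblot_var} (applicable since $|\gamma| < \omega_n$) yields a trivialization $\Psi^\sigma \in \mathrm{Triv}_H(\Phi^\sigma \restriction \restriction x_\gamma)$ supported on $\vec{e}$, and Definition \ref{defn: ext_def} extends $\Phi^\sigma$ by $\Psi^\sigma$ on both children. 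After constructing the tree, $\wkd(S^n_{n-1})$ will be used to select a branch $g \in 2^{\omega_n}$ such that $\Phi^g := \bigcup_{\alpha < \omega_n} \Phi^{g \restriction \alpha}$ is nontrivial.

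The heart of the inductive step is the bifurcation at successor stages $\gamma + 1$ with $\gamma \in S^n_{n-1} \cap \Lim(C)$. For such $\gamma$, pick an increasing cofinal sequence $\langle \delta^\gamma_\xi \mid \xi < \omega_{n-1} \rangle$ in $C \cap \gamma$ and verify, using conditions (a)--(c) of ascending, that $\langle x_{\delta^\gamma_\xi} \cap x_\gamma \mid \xi < \omega_{n-1} \rangle$ is itself an ascending sequence with derived witnesses $e^\gamma_w := e_{\tilde{w} \cup \{(\gamma,1)\}}$, where $\tilde{w}$ is the pushforward of $w$ along $\xi \mapsto \delta^\gamma_\xi$. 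The inductive hypothesis supplies a nontrivial coherent $H$-valued $(n-1)$-family $T^\gamma$ along the subsequence, supported on the derived witnesses. Transport $T^\gamma$ to a nontrivial coherent $(n-1)$-family $T^{\gamma,*}$ along $\vec{x} \restriction \gamma \restriction \restriction x_\gamma$, exploiting the fact that its support lives in concretely identifiable $\vec{e}$-witness sets inside $x_\gamma$. Then, choosing any $\Psi^\sigma \in \mathrm{Triv}_H(\Phi^\sigma \restriction \restriction x_\gamma)$ supported on $\vec{e}$, set $\Phi^{\sigma ^\frown \langle 0 \rangle} := \Phi^\sigma {}^\frown \langle \Psi^\sigma \rangle$ and $\Phi^{\sigma ^\frown \langle 1 \rangle} := \Phi^\sigma {}^\frown \langle \Psi^\sigma + T^{\gamma,*} \rangle$. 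By Proposition \ref{nonequiv_prop}, the nontriviality of $T^{\gamma,*}$ forces $\Psi^\sigma + T^{\gamma,*} \not\cong_{\Phi^\sigma \restriction \restriction x_\gamma, H} \Psi^\sigma$, and Proposition \ref{nonextension_prop} then implies that no single trivialization of the eventual $\Phi^g$ can be compatible with both branches of the bifurcation.

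For the weak diamond finale, enumerate the support set $E := \bigcup\{e_w \mid w \in \Fn^+(\omega_n)\}$, and use the bound $|H| \leq 2^{\omega_1}$ to set up a coding of hypothetical trivializations $\Psi'$ of $\Phi^g$ (which may be assumed WLOG to be supported on $\vec{e}$) as functions $f_{\Psi'} : \omega_n \to 2$ such that $f_{\Psi'} \restriction \gamma$ determines the $\cong_{\Phi^g \restriction \restriction x_\gamma, H}$-class of $\Psi' \restriction \restriction x_\gamma$ for every $\gamma$ in a designated club. Define $F : 2^{<\omega_n} \to 2$ so that $F(\tau)$ extracts the bit indicating which of the two branch extensions at level $\lh(\tau)$ is $\cong$-equivalent to the $\Psi'$ coded by $\tau$; applying $\wkd(S^n_{n-1})$ yields a branch $g$, and the standard contradiction (exactly as at the end of Theorem \ref{thm: 1d_ascending}) shows that no $\Psi'$ can globally trivialize $\Phi^g$. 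The main obstacles are (i) the transport of $T^\gamma$ to the full $\gamma$-indexed sequence, since the chosen cofinal subsequence need not be $\sub^*$-cofinal in $\{x_\alpha \cap x_\gamma \mid \alpha < \gamma\}$ and hence Proposition \ref{prop: extension} does not apply verbatim; the support-on-$\vec{e}$ formalism, which gives a canonical localization of $T^\gamma$ in specific $e_{\tilde{w}}$-sets, is precisely what makes the transport workable; and (ii) designing the weak diamond coding to capture $\cong$-equivalence classes rather than mere pointwise values, which is where the bound $|H| \leq 2^{\omega_1}$ enters.
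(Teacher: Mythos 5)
Your overall strategy matches the paper's: induct on $n$, build a tree of coherent $n$-families indexed by $\mc{S}$ with bifurcations at $\gamma\in\Lim(C)\cap S^n_{n-1}$, use the inductive hypothesis to produce a nontrivial coherent $(n-1)$-family $T^\gamma$ witnessing the bifurcation, and close with $\wkd(S^n_{n-1})$ via a coding $G$ of potential trivializations (which is where $|H|\leq 2^{\omega_1}$ enters). That skeleton is right.

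However, there is a genuine gap in the bifurcation step, and you flag it yourself without actually resolving it. You propose to choose an increasing \emph{cofinal subsequence} $\langle\delta^\gamma_\xi\mid\xi<\omega_{n-1}\rangle$ of $C\cap\gamma$, apply the inductive hypothesis to the $\omega_{n-1}$-sequence $\langle x_{\delta^\gamma_\xi}\cap x_\gamma\rangle$, and then ``transport'' the resulting $(n-1)$-family to one indexed over \emph{all} of $\gamma$. As you note, $\{x_\alpha\cap x_\gamma\mid\alpha<\gamma\}$ is not $\sub^*$-directed in a way that makes the subsequence $\sub^*$-cofinal (ascending sequences are not $\sub^*$-increasing), so Proposition~\ref{prop: extension} does not apply. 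Your suggested remedy --- that the support-on-$\vec{e}$ formalism makes the transport ``workable'' --- is a promissory note: knowing that $\supp(\tau^\gamma_a)$ sits inside specific $e$-sets does not by itself tell you how to define $\tau^{\gamma,*}_a$ for $a\in\gamma^{n-1}$ whose coordinates lie outside $\{\delta^\gamma_\xi\}$, nor how to preserve coherence or nontriviality under whatever extension you pick. That step is not justified.

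The paper sidesteps the issue entirely by never passing to a cofinal subsequence. Instead it fixes a club $D_\gamma\sub C\cap\gamma$ of order type $\omega_{n-1}$ together with a \emph{bijection} $\pi_\gamma:\gamma\to\omega_{n-1}$ whose restriction to $D_\gamma$ is order-preserving and continuous, and then sets $y^\gamma_\eta:=x_\gamma\cap x_{\pi_\gamma^{-1}(\eta)}$ for \emph{every} $\eta<\omega_{n-1}$. Since $\pi_\gamma$ is a bijection, the sequence $\vec{y}=\langle y^\gamma_\eta\rangle$ is literally the sequence $\langle x_\alpha\cap x_\gamma\mid\alpha<\gamma\rangle$ re-enumerated, so once the inductive hypothesis supplies a nontrivial coherent $(n-1)$-family $\bar{T}^\gamma$ along $\vec{y}$, ``re-indexing via $\pi_\gamma$'' is an honest bijective relabelling: alternation, coherence, and nontriviality all transfer without any extension argument. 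The price is that one must verify that $\vec{y}$ is ascending with witness club $\bar{D}_\gamma=\pi_\gamma[D_\gamma]$ even though the enumeration is now scrambled off $D_\gamma$ --- this is the content of the claim inside the paper's proof, and it uses conditions (a)--(c) of Definition~\ref{ascending_def} together with the fact that $\gamma\in C$ in a way that is close to, but more careful than, what you sketch. So your obstacle (i) is real; replace the cofinal subsequence with a full bijection agreeing with the increasing enumeration on a club, and the argument goes through.

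One smaller remark: your derived witnesses $e^\gamma_w:=e_{\tilde{w}\cup\{(\gamma,1)\}}$ make verifying condition (c) for $\vec{y}$ awkward, since $\tilde{w}\cup\{(\gamma,1)\}$ has $\gamma$ in its domain and therefore cannot be compared to $\Fn^+(\min(a))$ for $a\sub\gamma$. The paper's choice $\bar{e}^\gamma_w:=x_\gamma\cap e_{\hat{w}^\gamma}$ avoids this.
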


\begin{proof}
  The proof is by induction on $n$. The case of $n = 1$ is precisely Theorem \ref{thm: 1d_ascending}, 
  so assume that $n > 1$ and that we have established the theorem for all $1 \leq m < n$.
  Let $\langle e_w \mid w \in \Fn^+(\omega_n) \rangle$ and $C \subseteq \omega_n$ 
  witness that $\vec{x}$ is ascending.
  For $\gamma < \omega_n$, let $E_\gamma := \bigcup\{e_w \mid w \in 
  \Fn^+(\gamma)\}$, and let $\bar{E}_\gamma := E_\gamma \cap x_\gamma$.
  
  Let $\mc{S} := \{\sigma \in {^{<\omega_n}}2 \mid \supp(\sigma) \subseteq \Lim(C) \cap S^n_{n-1}\}$, 
  and let $\mc{S}^+ := \{g \in {^{\omega_n}}2 \mid \supp(g) \subseteq \Lim(C) \cap S^n_{n-1}\}$.
  Similarly to the proof of Theorem \ref{thm: 1d_ascending}, for each $\sigma \in \mc{S}$, we will 
  construct a coherent $H$-valued $n$-family $\Phi^\sigma = \langle \varphi^\sigma_b \mid b 
  \in \lh(\sigma)^n \rangle$ in such a way that if $\sigma \sqsubseteq \tau \in \mc{S}$, then $\Phi^\sigma 
  \sqsubseteq \Phi^\tau$. We will then find some $g \in \mc{S}^+$ such that 
  $\Phi^g := \bigcup_{\alpha < \omega_n} \Phi^{g \restriction \alpha}$ is nontrivial.
  
  We now describe the construction of $\langle \Phi^\sigma \mid \sigma \in 
  \mc{S} \rangle$, which will be done by recursion on $\lh(\sigma)$. 
  We will ensure along the way that, for all $\gamma < \omega_n$ and all 
  $\sigma \in \mc{S} \cap {^\gamma}2$, the family $\Phi^\sigma$ is supported on 
  $\langle e_w \mid w \in \Fn^+(\gamma) \rangle$.
  As in the proof of Theorem \ref{thm: 1d_ascending}, the case in which $\lh(\sigma)$ 
  is a limit ordinal is trivial, so fix a $\sigma \in \mc{S}$ and assume 
  that $\Phi^\sigma$ has been constructed. We describe how to construct 
  $\Phi^{\sigma, \ell}$ for $\ell < 2$ if $\lh(\sigma) \in 
  \Lim(C) \cap S^n_{n-1}$ and for $\ell = 0$ otherwise.
  
  Let $\gamma := \lh(\sigma)$. Since $\gamma < \omega_n$, Proposition
  \ref{prop: goblot_var} implies that $\Phi^\sigma \restriction \restriction x_\gamma$ 
  is trivial. Let $\Psi^\sigma = \langle \psi^\sigma_a \mid a \in \gamma^{n-1} 
  \rangle$ be an element of $\mathrm{Triv}_H(\Phi^\sigma \restriction 
  \restriction x_\gamma)$. Since $\Phi^\sigma \restriction \restriction x_\gamma$ 
  is supported on $\bar{E}_\gamma$, we can assume that $\Psi^\sigma$ is also 
  supported on $\bar{E}_\gamma$. If $\gamma \notin \Lim(C) \cap S^n_{n-1}$, 
  then simply let $\Phi^{\sigma, 0} = 
  \Phi^\sigma{}^\frown \langle \Psi^\sigma \rangle$. It is easily verified that this 
  maintains the requirements of the construction; to see that it maintains coherence, 
  recall Remark \ref{remark: coherence}.
  
  If $\gamma \in \Lim(C) \cap S^n_{n-1}$, then we work a little bit harder. 
  Let $D_\gamma \subseteq C \cap \gamma$ be a club in $\gamma$ with 
  $\otp(D_\gamma) = \omega_{n-1}$, and let $\pi_\gamma \colon \gamma \ra 
  \omega_{n-1}$ be a bijection such that $\pi \restriction 
  D_\gamma$ is order-preserving and continuous. In particular, 
  $\bar{D}_\gamma := \pi[D_\gamma]$ is a club in $\omega_{n-1}$. 
  Note that $\{\eta \in \bar{D}_\gamma \mid \pi_\gamma^{-1}[\eta] \subseteq 
  \pi^{-1}(\eta)\}$ is a club in $\omega_{n-1}$. By thinning out $D_\gamma$ 
  (and hence also $\bar{D}_\gamma)$, we can assume that $\pi_\gamma^{-1}[\eta] 
  \subseteq \pi_\gamma^{-1}(\eta)$ for all $\eta \in \bar{D}_\gamma$.
  
  For all $\eta < \omega_{n-1}$, let $y^\gamma_\eta := x_\gamma \cap 
  x_{\pi_\gamma^{-1}(\eta)}$. Given $w \in \Fn^+(\omega_{n-1})$, let 
  $\hat{w}^\gamma \in \Fn^+(\gamma)$ be defined by 
  letting $\dom(\hat{w}^\gamma) := \pi_\gamma^{-1}[\dom(w)]$ and, for all 
  $\alpha \in \dom(\hat{w}^\gamma)$, setting $\hat{w}^\gamma(\alpha) =
  w(\pi_\gamma(\alpha))$. For such $w$, let $\bar{e}^\gamma_w := 
  x_\gamma \cap e_{\hat{w}^\gamma}$.
  
  \begin{claim}
    $\vec{y} = \langle y^\gamma_\eta \mid \eta < \omega_{n-1} \rangle$ 
    is an ascending sequence of sets, as witnessed by $\langle \bar{e}^\gamma_w 
    \mid w \in \Fn^+(\omega_{n-1}) \rangle$ and $\bar{D}_\gamma$.
  \end{claim}
  
  \begin{proof}
    We must verify conditions (2)(a--c) of Definition \ref{ascending_def}. To see 
    (a), fix $w \in \Fn^+(\omega_{n-1})$, and note that 
    \[
      d^{\vec{y}}_w = d^{\vec{x}}_{\hat{w}^\gamma} \cap x_\gamma = 
      d^{\vec{x}}_{\hat{w}^\gamma \cup \{(\gamma,1)\}}.
    \] 
    By the fact that $\gamma \in C$, we know that
    \[
      (\vec{y} \text{ respects } w) \Leftrightarrow (\vec{x} \text{ respects } 
      \hat{w}^\gamma \cup \{(\gamma,1\}) 
      \Leftrightarrow (\vec{x} \text{ respects } \hat{w}^\gamma).
    \]
    Thus, if $\vec{y}$ respects $w$, the fact that $\vec{x}$ is ascending 
    implies that 
    \[
      \bar{e}^\gamma_w = x_\gamma \cap e_{\hat{w}^\gamma} 
      \in [d^{\vec{y}}_w]^{\aleph_0},
    \] 
    so (a) is satisfied.
    
    Item (b) follows from the same reasoning: fix $\eta \in \bar{D}_\gamma$ and 
    $a \in [\eta]^{<\omega}$, and define $w \in \Fn^+(\omega_{n-1})$ by setting 
    $\dom(w) = a \cup \{\eta\}$, $w(\eta) = 1$, and $w(\xi) = 0$ for all $\xi \in a$. 
    The fact that $\eta \in \bar{D}_\gamma$ and $\vec{x}$ is ascending implies that 
    $\vec{x}$ respects $\hat{w}^\gamma$, and hence $\vec{y}$ respects $w$, as desired.
    
    For (c), fix a nonempty $a \in [\bar{D}_\gamma]^{<\omega}$ and $w \in \Fn^+(\min(a))$ 
    such that $\vec{y}$ respects $w$. Let $\hat{a}^\gamma := \pi_\gamma^{-1}[a]$. 
    Note that $\hat{w}^\gamma \in \Fn^+(\min(\hat{a}^\gamma))$, 
    $\hat{a}^\gamma \in [C]^{<\omega}$, and $\vec{x}$ respects $\hat{w}^\gamma$.
    Therefore, since $\gamma \in C$, we know that $e_{\hat{w}^\gamma} 
    \cap x_\gamma \cap \bigcap_{\alpha \in \hat{a}^\gamma} x_\alpha$ is infinite. 
    But $e_{\hat{w}^\gamma} \cap x_\gamma \cap \bigcap_{\alpha \in \hat{a}^\gamma}
    x_\alpha = \bar{e}^\gamma_w \cap \bigcap_{\eta \in a} y_\eta$, so this 
    instance of (c) is satisfied.
  \end{proof}
  By the inductive hypothesis, we can find a nontrivial coherent $(n-1)$-family 
  \[
    \bar{T}^\gamma = \left\langle \bar{\tau}^\gamma_a \colon \bigcap_{i < n-1} y_{a(i)} \ra H \ 
    \middle| \ a \in (\omega_{n-1})^{n-1} \right\rangle
  \] 
  that is supported on $\bar{E}_\gamma$.
  Through re-indexing via $\pi_\gamma$, this yields a nontrivial coherent $(n-1)$-family
  \[
    T^\gamma = \left\langle \tau^\gamma_a \colon x_\gamma \cap \bigcap_{i < n-1} 
    x_{a(i)} \ra H \ \middle| \ a \in \gamma^{n-1} \right\rangle
  \]
  that is also supported on $\bar{E}_\gamma$. Then Proposition \ref{nonequiv_prop} 
  implies that $(\Psi^\sigma + T^\gamma) \in \mathrm{Triv}_H(\Phi^\sigma 
  \restriction \restriction x_\gamma)$ and $\Psi^\sigma 
  \not\cong_{\Phi^\sigma \restriction \restriction x_\gamma,H} (\Psi^\sigma + T^\gamma)$.
  Finally, set $\Phi^{\sigma, 0} := \Phi^\sigma{}^\frown 
  \langle \Psi^\sigma \rangle$, and set $\Phi^{\sigma, 1} 
  := \Phi^\sigma{}^\frown \langle \Psi^\sigma + T^\gamma \rangle$. It is routine to verify 
  that these definitions maintain coherence 
  and that the resulting families $\Phi^{\sigma, \ell}$ 
  are supported on $\langle e_w \mid w \in \Fn^+(\gamma+1) \rangle$. 
  This completes the construction of $\langle \Phi^\sigma \mid \sigma \in 
  \mc{S} \rangle$.
  
  Let $E = E_{\omega_n} := \bigcup\{e_w \mid w \in \Fn^+(\omega_n)\}$. 
  Note that the sequence $\langle E_\gamma \mid \gamma \leq \omega_n 
  \rangle$ is $\subseteq$-increasing and continuous, and that 
  $|E_\gamma| < \omega_n$ for all $\gamma < \omega_n$. 
  
  For all $\gamma \leq \omega_n$, let 
  $\mc P^\gamma$ denote the set of all alternating $(n-1)$-families of the form 
  \[
    \Psi = \left\langle \psi_a \colon E_\gamma \cap \bigcap_{i < n-1} x_{a(i)} \ra H 
    \ \middle| \ a \in \gamma^{n-1} \right\rangle.
  \]
  If $\gamma < \delta \leq \omega_n$, $\Psi \in \mc P^\gamma$, and $\Psi' \in 
  \mc P^\delta$, then we say that $\Psi'$ \emph{extends} $\Psi$, written 
  $\Psi' \trianglerighteq \Psi$, if, for all $a \in \gamma^{n-1}$, we have 
  $\psi_a = \psi'_a \restriction (E_\gamma \cap \bigcap_{i < n-1} x_{a(i)})$.
  
  \begin{claim} \label{claim: unique_triv}
    For all $\gamma < \omega_n$ and all $\Psi \in \mc{P}^\gamma$, there is at most 
    one $\sigma \in \mc{S} \cap {^\gamma}2$ such that $\Psi \in 
    \mathrm{Triv}_H(\Phi^\sigma \restriction \restriction E_\gamma)$.
  \end{claim}
  
  \begin{proof}
    Suppose for the sake of contradiction that there are $\gamma < \omega_n$, 
    $\Psi \in \mc{P}_\gamma$, and distinct $\sigma_0, \sigma_1 \in \mc{S} \cap {^{\gamma}}2$ 
    such that $\Psi \in \mathrm{Triv}_H(\Phi^{\sigma_\ell} \restriction \restriction 
    E_\gamma)$ for $\ell < 2$. Let $\beta < \gamma$ be the least ordinal at which 
    $\sigma_0$ and $\sigma_1$ disagree; without loss of generality, assume that 
    $\sigma_\ell(\beta) = \ell$ for $\ell < 2$. By the definition of $\mc{S}$, we must 
    have $\beta \in \Lim(C) \cap S^n_{n-1}$. Let $\tau = \sigma_0 \restriction \beta = 
    \sigma_1 \restriction \beta$. In the construction, we fixed a 
    $\Psi^\tau \in \mathrm{Triv}_H(\Phi^\tau \restriction \restriction x_\beta)$ and 
    a nontrivial coherent $(n-1)$-family $T^\beta$, both supported on $\bar{E}_\beta$, 
    such that $\Phi^{\tau, 0} = \Phi^\tau{}^\frown \langle 
    \Psi^\tau \rangle$ and $\Phi^{\tau, 1} = \Phi^\tau{}^\frown 
    \langle \Psi^\tau + T^\beta \rangle$. Then, noting that $\bar{E}_\beta \subseteq 
    E_\gamma$, our assumptions imply that, for all $a \in \beta^{n-1}$, we have
	\begin{align*}
	  (-1)^{n+1}(\psi^\tau_a) \restriction \bar{E}_\beta &= 
	  \varphi^{\sigma_0}_{a^\frown \langle \beta \rangle} \restriction \bar{E}_\beta \\ 
	  &=^* (-1)^{n-1} \psi_a \restriction \bar{E}_\beta + \sum_{i < n-1} (-1)^i\psi_{a^i{}^\frown 
	  \langle \beta \rangle} \restriction \bar{E}_\beta \\
	  &=^* \varphi^{\sigma_1}_{a^\frown \langle \beta \rangle} \restriction \bar{E}_\beta \\ 
	  &= (-1)^{n+1}(\psi^\tau_a + \tau^\beta_a) \restriction \bar{E}_\beta.
	\end{align*}
	In particular, from the first and last terms of the above equation we see that 
	$\tau^\beta_a \restriction \bar{E}_\beta =^* 0$ for all $a \in \beta^{n-1}$, 
	contradicting the fact that $T^\beta$ is a nontrivial coherent $(n-1)$-family 
	supported on $\bar{E}_\beta$.
  \end{proof}
  
  Fix a coding function $G$ with domain ${^{\leq \omega_n}}2$ and a club 
  $D \subseteq \Lim(C)$ with the following properties:
  \begin{itemize}
    \item for all $\gamma \in D \cup \{\omega_n\}$, $G \restriction {^{\gamma}}2$ is a surjection 
    from ${^{\gamma}}2$ to $\mc P^\gamma$ (this is where we use the assumption that 
    $|H| \leq 2^{\omega_1}$);
    \item for all $\gamma < \delta$, both in $D \cup \{\omega_n\}$, and all $\sigma \in {^{\gamma}}2$ 
    and $\sigma' \in {^{\delta}}2$, if $\sigma' \sqsupseteq \sigma$, then 
    $G(\sigma') \trianglerighteq G(\sigma)$.
%    \item for all $\gamma < \delta$, both in $D \cup \{\omega_n\}$, all $\sigma \in {^{\gamma}}2$, and 
%    all $\Psi' \in \mc P^\delta$ such that $\Psi' \trianglerighteq G(\sigma)$, there 
%    is $\sigma' \in {^{\delta}}2$ such that $\sigma' \sqsupseteq \sigma$ and 
%    $G(\sigma') = \Psi'$.
  \end{itemize}
  As long as all intervals between successive elements of $D$ have size $\omega_{n-1}$, 
  it is routine to construct such a function $G$, using the fact that $|\mc P^\gamma| \leq 
  2^{\omega_{n-1}}$ for each $\gamma < \omega_n$.
  
  Now define a function $F\colon{^{<\omega_n}}2 \ra 2$ as follows. If 
  $\gamma \notin D \cap S^n_{n-1}$, simply let $F \restriction {^\gamma}2 = 0$. 
  Suppose now that $\gamma \in D \cap S^n_{n-1}$ and $\rho \in {^\gamma}2$.
  If there does not exist $\sigma \in \mc{S} \cap {^\gamma}2$ such that 
  $G(\rho) \in \mathrm{Triv}_H(\Phi^\sigma \restriction \restriction E_\gamma)$, 
  then let $F(\rho) = 0$. If there is $\sigma \in \mc{S} \cap {^\gamma}2$ 
  such that $G(\rho) \in \mathrm{Triv}_H(\Phi^\sigma \restriction \restriction E_\gamma)$, 
  then, by Claim \ref{claim: unique_triv}, there is a unique such $\sigma$; denote 
  this by $\sigma(\rho)$. By Proposition \ref{nonextension_prop} and the construction 
  of $\Phi^{\sigma(\rho), \ell}$, there is at most one $\ell < 2$ 
  such that $G(\rho)$ extends to an element of $\mathrm{Triv}_H(\Phi^{\sigma(\rho), \ell})$. 
  More precisely, if $G(\rho) = \Psi$, then there is at most one 
  $\ell < 2$ for which there exists a $\Psi' \in \mathrm{Triv}_H(\Phi^{\sigma(\rho), \ell})$ 
  such that, for all $a \in \gamma^{n-1}$, we have $\psi'_a \restriction E_\gamma = \psi_a$. 
  Then choose $F(\rho) \in 2$ so that $G(\rho)$ does \emph{not} extend to an element of 
  $\mathrm{Triv}_H(\Phi^{\sigma(\rho),\ell})$.
  
  Let $g \in {^{\omega_n}}2$ witness the instance of $\wkd(S^n_{n-1})$ associated with $F$. 
  Since this only depends on the values $g$ takes on $S^n_{n-1}$, or any relatively club 
  subset thereof, we can assume that $g \in \mc{S}$.
  We claim that $\Phi^g$ is nontrivial. Suppose for the sake of contradiction that $\Psi = 
  \langle \psi_a \mid a \in (\omega_n)^{n-1} \rangle$ trivializes 
  $\Phi^g$. By the construction of $G$, we can find a function $f \in 
  {^{\omega_n}}2$ such that, for all $\gamma \in D \cup \{\omega_n\}$, 
  we have 
  \[
    G(f \restriction \gamma) = \left\langle \psi_a \restriction 
    E_\gamma \cap \bigcap_{i < n-1} x_{a(i)} \ \middle| \ a \in 
    \gamma^{n-1} \right\rangle.
  \]
  Moreover, for all $\gamma \in D$, it follows that $G(f \restriction 
  \gamma)$ trivializes $\Phi^{g \restriction \gamma} \restriction 
  \restriction E_\gamma$.
  
  By our choice of $g$, we can find $\gamma \in D \cap S^n_{n-1}$ 
  such that $g(\gamma) = F(f \restriction \gamma)$. By our construction 
  of $F$, we then know that $\sigma(f \restriction \gamma) = g \restriction \gamma$
  and that $G(f \restriction \gamma)$ does not extend to an element of 
  $\mathrm{Triv}_H(\Phi^{g \restriction (\gamma + 1)})$. Therefore, 
  \emph{a fortiori}, $G(f \restriction \gamma)$ does not extend to an element 
  of $\mathrm{Triv}_H(\Phi^g)$. But this is contradicted by the fact 
  that $\Psi$ extends $G(f \restriction \gamma)$ and is an element of 
  $\mathrm{Triv}_H(\Phi^g)$. This contradiction shows that 
  $\Phi^g$ is indeed nontrivial, thus completing the proof of the theorem.
\end{proof}

\begin{remark}
  The question of whether the assumption that $|H| \leq 2^{\omega_1}$ can be 
  removed from the statement of Theorem \ref{thm: ascending} is a good one, and one 
  asked by the referee. The assumption could clearly be removed if one could prove 
  the following statement: for all $1 \leq n < \omega$ and all nonzero abelian groups 
  $G \leq H$, every nontrivial coherent $G$-valued $n$-family remains nontrivial 
  when viewed as an $H$-valued family, i.e., $(\mathrm{Triv}_G(\Phi) = \emptyset) 
  \Rightarrow (\mathrm{Triv}_H(\Phi) = \emptyset)$. This statement is clearly true when 
  $n = 1$ and, with some work, can be proven when $n = 2$. It is also true for all $n$ if 
  $G$ is a direct summand of $H$. However, whether the statement holds in full generality 
  remains open, as does the question of whether the cardinality assumption can be 
  removed from Theorem \ref{thm: ascending}.
\end{remark}

We can get rid of the weak diamond assumptions in the previous theorem at the 
cost of increasing the size of the group into which we are mapping. 
For a group $H$ and an ordinal $\beta$, we write $H^{(\beta)}$ 
to denote the direct sum of $\beta$-many copies of $H$; concretely, this is 
the group consisting of all finitely-supported functions $f \colon \beta \ra H$.
If $\alpha < \beta$, then we consider $H^{(\alpha)}$ as a subgroup of 
$H^{(\beta)}$ in the obvious way. If we have fixed a particular nonzero 
element $1_H \in H$, then, for $\eta < \beta$, we let 
$1_\eta$ denote the element of $H^{(\beta)}$ whose support is precisely $\{\eta\}$
and takes value $1_H$ at $\eta$.

\begin{theorem} \label{thm: ascending_wide}
  Suppose that $1 \leq n < \omega$, $H$ is a nonzero abelian group, 
  and $\vec{x} = \langle x_\alpha \mid \alpha < 
  \omega_n \rangle$ is an ascending sequence of sets. Then there is a nontrivial 
  coherent $n$-family
  \[
    \Phi = \left \langle \varphi_b \colon \bigcap_{i < n} x_{b(i)} \ra H^{(\omega_n)} 
    \ \middle| \ b \in (\omega_n)^n \right \rangle.
  \]
  Moreover, if $\vec{e} = \langle e_w \mid w \in \Fn^+(\omega_n) \rangle$ and 
  $C \subseteq \omega_n$ witness that $\vec{x}$ is ascending, then we can arrange 
  so that $\Phi$ is supported on $\vec{e}$.
\end{theorem}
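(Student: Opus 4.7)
The plan is to proceed by induction on $n$, in close analogy with the proof of Theorem \ref{thm: ascending} but replacing the use of weak diamond with a direct construction that exploits the extra coordinates of $H^{(\omega_n)}$. The key observation is that the weak diamond in Theorem \ref{thm: ascending} was used only to \emph{select}, at each critical stage $\gamma$, between two candidate extensions $\Phi^{\sigma,0}$ and $\Phi^{\sigma,1}$ differing by an embedded copy of a nontrivial coherent $(n-1)$-family $T^\gamma$; with $\omega_n$ independent coordinates available in $H^{(\omega_n)}$, each such $T^\gamma$ can instead be \emph{recorded} in a dedicated $\gamma$-indexed coordinate block, removing the need to branch at all.

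For the inductive step ($n > 1$), at each critical $\gamma \in \Lim(C) \cap S^n_{n-1}$ I would construct the sequence $\vec{y}^\gamma$ as in the proof of Theorem \ref{thm: ascending} and verify that it is ascending of length $\omega_{n-1}$. By the inductive hypothesis, obtain a nontrivial coherent $(n-1)$-family $\bar T^\gamma$ on $\vec{y}^\gamma$ valued in $H^{(\omega_{n-1})}$; re-index via $\pi_\gamma^{-1}$ to produce $T^\gamma$ indexed by $\gamma^{n-1}$; and, fixing an injection $\omega_n \times \omega_{n-1} \hookrightarrow \omega_n$, embed $H^{(\omega_{n-1})}$ into $H^{(\omega_n)}$ as the $\gamma$-th ``block'' of coordinates via an embedding $\iota_\gamma$. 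Extend $\Phi^{<\gamma}$ following Definition \ref{defn: ext_def} by setting $\varphi_{a^\frown \langle \gamma \rangle} := (-1)^{n+1}(\Psi^{<\gamma}_a + \iota_\gamma(T^\gamma_a))$, where $\Psi^{<\gamma}$ is an $H^{(\omega_n)}$-valued trivialization of $\Phi^{<\gamma} \restriction \restriction x_\gamma$ (existing by Proposition \ref{prop: goblot_var}) chosen to vanish on the $\gamma$-th block---which is possible because the $\gamma$-block projection of $\Phi^{<\gamma}$ is identically zero. For nontriviality, given a putative trivialization $\Psi$ of the resulting $\Phi$, I would project $\Psi$ onto the $\gamma$-th block and unwind the coherence relations as in the proof of Proposition \ref{nonextension_prop}, thereby producing an $H^{(\omega_{n-1})}$-valued trivialization of $T^\gamma$ and contradicting the inductive hypothesis.

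The principal obstacle lies in the base case $n = 1$, where the inductive scaffolding $T^\gamma$ is unavailable. There I would set $\varphi_\gamma := \psi_\gamma + \chi_{y_\gamma} \cdot 1_\gamma$ with $y_\gamma$ constructed as in Theorem \ref{thm: 1d_ascending} and $\psi_\gamma$ a trivialization of $\Phi^{<\gamma} \restriction \restriction x_\gamma$ chosen to vanish in the $\gamma$-th coordinate; but verifying nontriviality requires care, since a naive double-counting of pairs $(x,\gamma)$ with $x \in y_\gamma$ and $\gamma \in \supp(\Psi(x))$ produces \emph{matching} rather than contradictory cardinality bounds. The base case will therefore need a more refined construction---for instance distributing the stage-$\gamma$ bump across countably many coordinates indexed by a partition of $y_\gamma$, or leveraging additional combinatorial features of the ascending structure (the club $C$ and the sets $e_w$) to place stronger constraints on $\Psi$. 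Ensuring consistency of the ``zero block'' choices for $\Psi^{<\gamma}$ across all critical $\gamma$ in the inductive step is a secondary technical point to confirm.
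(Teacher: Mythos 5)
Your overall strategy is the paper's own: discard the weak diamond branching and instead ``record'' the nontrivial $(n-1)$-family $T^\gamma$ in a dedicated block of coordinates of $H^{(\omega_n)}$ at each critical $\gamma$. The inductive step you describe matches the paper's, and the base-case construction you propose (bumping $\psi^{<\gamma}$ by $1_\gamma$ on $y_\gamma$, with $\psi^{<\gamma}$ chosen to vanish in coordinate $\gamma$) is exactly what the paper does. However, there are two points where your account falls short.

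The principal gap is that you have misdiagnosed the base case. No ``more refined construction'' is needed, and the difficulty is not a matter of counting. You should abandon the attempt to double-count pairs $(x,\gamma)$ and instead run a pressing-down argument. Maintain the recursive hypothesis that each $\varphi_\beta$ maps into $H^{(\beta+1)}$ and is supported on $\bar E_\beta := E_\beta \cap x_\beta$, where $E_\beta = \bigcup\{e_w : w \in \Fn^+(\beta)\}$. Then $\langle E_\beta \mid \beta < \omega_1\rangle$ is a $\subseteq$-increasing continuous sequence of countable sets with union $E$ of size at most $\omega_1$. If $\psi$ trivializes $\Phi$, then for each $z \in E$ the set $\supp(\psi(z))$ is a finite subset of $\omega_1$; hence the set $\{\beta < \omega_1 : \forall z \in E_\beta,\ \supp(\psi(z)) \subseteq \beta\}$ is club. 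Intersecting with $\Lim(C)$ yields a $\beta$ such that $\psi\restriction E_\beta$ maps into $H^{(\beta)}$, so $\beta \notin \supp(\psi(z))$ for every $z \in y_\beta \subseteq \bar E_\beta$. But by construction $\varphi_\beta(z) = 1_\beta$ for $z \in y_\beta$, so $\psi(z) \neq \varphi_\beta(z)$ for every $z$ in the infinite set $y_\beta$, contradicting $\psi \restriction x_\beta =^* \varphi_\beta$. The same closure argument (now taking suprema over the $<\omega_n$-many relevant pairs $(\alpha, w)$ and using $\cf(\omega_n) = \omega_n$) is what produces the critical $\gamma$ in the inductive step, so you should internalize it as the engine of the whole proof.

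The secondary issue is that the ``secondary technical point'' you flag---arranging for $\Psi^{<\gamma}$ to avoid the $\gamma$-block---is not a detail to be confirmed afterwards but a constraint that must be engineered into the recursion from the start. With an arbitrary injection $\omega_n \times \omega_{n-1} \hookrightarrow \omega_n$, the earlier trivializations $\Psi^{<\alpha}$ (for $\alpha < \gamma$) have no reason to avoid the $\gamma$-block, and you cannot retroactively fix this. The clean way to handle it is the paper's: take the $\gamma$-block to be the ordinal interval $[\gamma, \gamma + \omega_{n-1})$ and maintain throughout that $\varphi_b$ maps into $H^{(\max(b) + \omega_{n-1})}$. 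Then at any $\gamma$ of cofinality $\omega_{n-1}$ the family $\Phi^{<\gamma}$ automatically maps into $H^{(\gamma)}$, the trivialization $\Psi^{<\gamma}$ can be chosen to do so too, and the shifted copy of $T^\gamma$ sits in fresh coordinates. Your general block decomposition can be made to work only after imposing a monotonicity constraint that essentially reconstructs this interval structure.
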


\begin{proof}
  Fix for the remainder of the proof an arbitrary nonzero element of $H$, and denote it by $1_H$.
  The proof is by induction on $n$. Suppose first that $n = 1$, and fix 
  $\vec{e}$ and $C$ witnessing that $\vec{x}$ is ascending. We will construct 
  a nontrivial coherent 1-family $\Phi = \langle \varphi_\beta \colon x_\beta \ra 
  H^{(\omega_1)} \mid \beta < \omega_1 \rangle$ that is supported on $\vec{e}$  
  by recursion on $\beta$, maintaining 
  the recursive hypothesis that, for all $\beta < \omega_1$, $\varphi_\beta$ 
  maps into $H^{(\beta + 1)}$ (i.e., for all $z \in x_\beta$, the support of 
  $\varphi_\beta(z)$ is a subset of $\beta + 1$).
  
  Suppose that $\beta < \omega_1$ and we have constructed $\Phi^\beta = \langle \varphi_\alpha 
  \mid \alpha < \beta \rangle$. As before, let $E_\beta = \bigcup \{e_w \mid 
  w \in \Fn^+(\beta)\}$ and $\bar{E}_\beta = E_\beta \cap x_\beta$. Since $\beta < \omega_1$, 
  Proposition \ref{prop: goblot_var} implies that $\Phi^\beta \restriction \restriction x_\beta$ 
  is trivial. Fix a trivialization $\psi^\beta \colon x_\beta \ra H^{(\omega_1)}$. By 
  our recursive assumption, $\Phi^\beta \restriction \restriction x_\beta$ is supported 
  on $\bar{E}_\beta$ and, for each $\alpha < \beta$, $\varphi_\alpha$ maps into $H^{(\beta)}$.
  Therefore, we can assume that $\psi^\beta$ is also supported on $\bar{E}_\beta$ 
  and maps into $H^{(\beta)}$. If $\beta \notin \Lim(C)$, then simply let 
  $\varphi_\beta = \psi^\beta$; this readily maintains the recursive hypotheses.
  
  Suppose now that $\beta \in \Lim(C)$. Precisely as in the proof of Theorem \ref{thm: 1d_ascending}, 
  fix a set $y_\beta \in [\bar{E}_\beta]^\omega$ such that, for all $\alpha < \beta$, 
  $y_\beta \cap x_\alpha$ is finite.
%   We first establish the following claim.
%  
%  \begin{claim}
%    There is a $y_\beta \in [\bar{E}_\beta]^\omega$ such that, for all $\alpha < \beta$, 
%    $y_\beta \cap x_\alpha$ is finite.
%  \end{claim}
%  
%  \begin{proof}
%    Since $\beta \in \Lim(C)$, we can injectively enumerate $\beta$ as $\langle \alpha_\ell \mid 
%    \ell < \omega \rangle$ in such a way that there is an unbounded set $B \subseteq \omega$ 
%    with the property that, for all $\ell \in B$, we have
%    \begin{itemize}
%      \item $\alpha_\ell \in C$; and
%      \item for all $j < \ell$, $\alpha_j < \alpha_\ell$.
%    \end{itemize}
%    For each $\ell \in B$, define $w_\ell \in \Fn^+(\beta)$ by letting $\dom(w_\ell) = 
%    \{\alpha_j \mid j \leq \ell\}$ and setting, for each $j < \ell$, 
%    \begin{align*}
%      w_\ell(\alpha_j) = \begin{cases}
%        0 & \text{if } j < \ell \\ 
%        1 & \text{if } j = \ell.
%      \end{cases}
%    \end{align*}
%    Since $\ell \in B$, it follows from Definition \ref{ascending_def} that $\vec{x}$ 
%    respects $w_\ell$. In particular, since $\beta \in C$, another application of 
%    Definition \ref{ascending_def} implies that $x_\beta \cap e_{w_\ell}$ is infinite. 
%    For each $\ell \in B$, choose a single element $z_\ell \in x_\beta \cap e_{w_\ell}$. 
%    Note that $z_\ell \in x_{\alpha_\ell} \setminus (\bigcup \{x_{\alpha_j} \mid j < \ell\})$.
%    It follows that the $z_\ell$'s are all distinct and that $y_\beta = \{z_\ell \mid 
%    \ell \in B\}$ is as desired.
%  \end{proof}
%  
%  Fix $y_\beta$ as given by the claim. 
Now define $\varphi_\beta$ by setting, for all $z \in x_\beta$,
  \begin{align*}
    \varphi_\beta(z) = \begin{cases}
      1_\beta & \text{if } z \in y_\beta \\
      \psi^\beta(z) & \text{if } z \notin y_\beta.
    \end{cases}  
  \end{align*}
  Since $y_\beta \cap x_\alpha$ is finite for all $\alpha < \beta$, this maintains the 
  coherence of $\Phi$. It is easily verified that all other recursive requirements are 
  satisfied by this definition. This completes the construction of $\Phi$.
  
  We claim that $\Phi$ is nontrivial. Suppose for the sake of contradiction that 
  $\psi \colon \bigcup \{x_\beta \mid \beta < \omega_1\} \ra H^{(\omega_1)}$ trivializes 
  $\Phi$. Note that $\langle E_\beta \mid \beta < \omega_1 \rangle$ is a continuous 
  $\subseteq$-increasing sequence of countable sets. Therefore, since, for each 
  $z \in \dom(\psi)$, $\supp(\psi(z))$ is a finite subset of $\omega_1$, we can find 
  $\beta \in \Lim(C)$ such that $\psi \restriction E_\beta$ maps into $H^{(\beta)}$.
  However, when we constructed $\varphi_\beta$, we found an infinite subset 
  $y_\beta \subseteq \bar{E}_\beta$ and ensured that $\beta \in \supp(\varphi_\beta(z))$ 
  for all $z \in y_\beta$. This implies that $\psi \restriction x_\beta \neq^* 
  \varphi_\beta$, contradicting the assumption that $\psi$ trivializes $\Phi$.
  
  Suppose now that $1 < n < \omega$ and we have established the theorem for all $1 \leq m < n$. 
  Again fix $\vec{e}$ and $C$ witnessing that $\vec{x}$ is ascending.
  For notational convenience, we will index our construction of a nontrivial coherent $n$-family 
  by $[\omega_n]^n$ rather than the full $(\omega_n)^n$; recall that this involves no loss of 
  generality by Remark \ref{remark: coherent_trivial}.
  We will construct a nontrivial coherent $n$-family $\Phi = \langle \varphi_b \colon 
  \bigcap_{i < n} x_{b(i)} \ra H^{(\omega_n)} \mid b \in [\omega_n]^n \rangle$ by recursion on $\max(b)$. 
  We will ensure that $\Phi$ is supported on $\vec{e}$ and will maintain the 
  recursive hypothesis that, for all $b \in [\omega_n]^n$, $\varphi_b$ maps into 
  $H^{(\max(b) + \omega_{n-1})}$.
  
  Fix $\gamma < \omega_n$ and suppose that we have specified 
  $\Phi^\gamma = \langle \varphi_b \mid b \in [\gamma]^n \rangle$. 
  We describe how to specify $\varphi_{a \cup \{\gamma\}}$ 
  for $a \in [\gamma]^{n-1}$. As before, let $E_\gamma := \bigcup\{e_w \mid 
  w \in \Fn^+(\gamma)\}$, and let $\bar{E}_\gamma := E_\gamma \cap x_\gamma$. 
  Since $\cf(\gamma) < \omega_n$, Proposition \ref{prop: goblot_var} implies that 
  $\Phi^\gamma \restriction \restriction x_\gamma$ is trivial. Fix 
  $\Psi^\gamma = \langle \psi^\gamma_a \colon x_\gamma \cap \bigcap_{i < n-1} x_{a(i)} 
  \ra H^{(\omega_{n+1})} \mid a \in [\gamma]^{n-1} \rangle$ in $\mathrm{Triv}_{H^{(\omega_n)}} 
  (\Phi^\gamma \restriction \restriction x_\gamma)$. Since $\Phi^\gamma \restriction \restriction 
  x_\gamma$ is supported on $\bar{E}_\gamma$ and, for each $b \in [\gamma]^n$, $\varphi_b$
  maps into $H^{(\gamma + \omega_{n-1})}$, 
  we can assume that $\Psi^\gamma$ is also supported on $\bar{E}_\gamma$ and, for 
  each $a \in [\gamma]^{n-1}$, $\psi^\gamma_a$
  maps into $H^{(\gamma + \omega_{n-1})}$. Moreover, if $\cf(\gamma) = \omega_{n-1}$, then we 
  in fact know that, for each $b \in [\gamma]^n$, $\varphi_b$ maps into 
  $H^{(\gamma)}$, so we can require that $\psi_a^\gamma$ does for each $a \in [\gamma]^{n-1}$ as well.
  
  If $\gamma \notin \Lim(C) \cap S^n_{n-1}$, then simply let $\varphi_{a \cup \{\gamma\}} 
  = (-1)^{n+1} \psi^\gamma_a$ for all $a \in [\gamma]^{n-1}$. Suppose now that $\gamma \in 
  \Lim(C) \cap S^n_{n-1}$. Recall in this case that $\psi_a^\gamma$ maps into 
  $2^{(\gamma)}$ for each $a \in [\gamma]^{n-1}$. 
  First, exactly as in the proof of Theorem \ref{thm: ascending}, 
  the inductive hypothesis implies that we can find a nontrivial coherent 
  $(n-1)$-family
  \[
    T^\gamma = \left\langle \tau^\gamma_a \colon x_\gamma \cap \bigcap_{i < n-1} 
    x_{a(i)} \ra H^{(\omega_{n-1})} \ \middle| \ a \in [\gamma]^{n-1} \right\rangle
  \]
  that is supported on $\bar{E}_\gamma$. The idea now is to add a ``shifted" version 
  of $T^\gamma$ to $\Psi^\gamma$. More precisely, for each $a \in [\gamma]^{n-1}$, 
  define a function 
  \[
    \mathrm{sh}_\gamma(\tau^\gamma_a)\colon x_\gamma \cap \bigcap_{i < n-1} x_{a(i)} 
    \ra H^{(\gamma + \omega_{n-1}))}
  \]
  by setting, for all $z \in x_\gamma \cap \bigcap_{i < n-1} x_{a(i)}$ and 
  all $\eta < \gamma + \omega_{n-1}$,
  \begin{align*}
    \mathrm{sh}_\gamma(\tau^\gamma_a)(z)(\eta) = 
    \begin{cases}
      0 & \text{if } \eta < \gamma \\ 
      \tau^\gamma_a(z)(\xi) & \text{if } \eta = \gamma + \xi.
    \end{cases}
  \end{align*}
  Then let $\varphi_{a \cup \{\gamma\}} = (-1)^{n+1}(\psi^\gamma_a + \mathrm{sh}_\gamma
  (\tau^\gamma_a)))$ for all $a \in [\gamma]^{n-1}$. It is easily verified, using the coherence 
  of $T^\gamma$ and the fact that $\Psi^\gamma$ trivializes $\Phi^\gamma \restriction \restriction 
  x_\gamma$, that this maintains coherence and all of the 
  other requirements of the recursive construction.
  
  We claim that the family $\Phi$ thus constructed is nontrivial. For the sake of contradiction, 
  suppose that it is trivial, and fix a $\Psi = \langle \psi_a \mid a \in 
  [\omega_n]^{n-1} \rangle$ in $\mathrm{Triv}_{H^{(\omega_n)}}(\Phi)$. 
  Using the fact that $\langle E_\gamma \mid \gamma < \omega_n \rangle$ is a continuous, 
  $\subseteq$-increasing sequence of sets, each of cardinality at most $\omega_{n-1}$,
  find $\gamma \in \Lim(C) \cap S^n_{n-1}$ such that, for all $a \in [\gamma]^n$, 
  $\psi_a \restriction E_\gamma$ maps into $H^{(\gamma)}$. 
  
  The argument now differs slightly depending on whether $n = 2$ or $n > 2$. Suppose 
  first that $n = 2$. Define a function $\rho \colon \bar{E}_\gamma \ra H^{(\omega_{n-1})}$ 
  by setting, for all $z \in \bar{E}_\gamma$ and all $\xi < \omega_{n-1}$,
  $\rho(z)(\xi) = -\psi_{\gamma}(z)(\gamma + \xi)$. By our choice of $\gamma$ and 
  the construction of $\Phi$, we know that, for all $\alpha < \gamma$, we have
  \begin{itemize}
    \item $\psi_{\gamma} - \psi_{\alpha} =^* \varphi_{\alpha\gamma}$;
    \item $\psi_\alpha \restriction \bar{E}_\gamma$ maps into $H^{(\gamma)}$;
    \item for all $z \in x_\alpha \cap \bar{E}_\gamma$ and all $\xi < \omega_{n-1}$, 
    $\varphi_{\alpha\gamma}(z)(\gamma + \xi) = -\tau^\gamma_\alpha(z)(\xi)$.
  \end{itemize}
  Putting this all together, we see that $\rho \restriction (x_\alpha \cap \bar{E}_\gamma) =^* 
  \tau^\gamma_\alpha \restriction \bar{E}_\gamma$ for all $\alpha < \gamma$, 
  and hence $\rho$ witnesses that $T^\gamma \restriction \restriction \bar{E}_\gamma$ is 
  trivial, contradicting the fact that $T^\gamma$ is nontrivial and supported on 
  $\bar{E}_\gamma$.
  
  Suppose next that $n > 2$. For each $d \in [\gamma]^{n-2}$, define a function 
  $\rho_d \colon \bar{E}_\gamma \cap \bigcap_{i < n-2} x_{d(i)} \ra H^{(\omega_{n-1})}$ by setting, 
  for all $z \in \bar{E}_\gamma \cap \bigcap_{i < n-2} x_{d(i)}$ and all $\xi < \omega_{n-1}$, 
  $\rho_d(z)(\xi) = (-1)^{n+1}\psi_{d \cup \{\gamma\}}(z)(\gamma + \xi)$. By our choice 
  of $\gamma$ and the construction of $\Phi$, we know that, for all $a \in [\gamma]^{n-1}$, we have
  \begin{itemize}
    \item $(-1)^{n-1} \psi_a + \sum_{i < n-1} (-1)^i \psi_{a^i \cup \{\gamma\}} 
    =^* \varphi_{a \cup \{\gamma\}}$;
    \item $\psi_a \restriction \bar{E}_\gamma$ maps into $H^{(\gamma)}$;
    \item for all $z \in \bar{E}_\gamma \cap \bigcap_{i < n-1} x_{a(i)}$ and all 
    $\xi < \omega_{n-1}$, 
    \[
      \varphi_{a \cup \{\gamma\}}(z)(\gamma + \xi) = (-1)^{n+1} \tau^\gamma_a(z)(\xi).
    \]
  \end{itemize}
  Putting this together, we see that, for all $a \in [\gamma]^{n-1}$, we have
  \[
    \sum_{i < n-1} (-1)^i \rho_{a^i} =^* \tau^\gamma_a 
    \restriction \bar{E}_\gamma,
  \]
  and hence $\langle \rho_d \mid d \in [\gamma]^{n-2} \rangle$ witnesses that 
  $T^\gamma \restriction \restriction \bar{E}_\gamma$ is trivial, contradicting 
  the fact that $T^\gamma$ is nontrivial and supported on $\bar{E}_\gamma$ 
  and completing the proof of the theorem.
\end{proof}

Theorems \ref{thm: ascending} and \ref{thm: ascending_wide}, together with Remark 
\ref{remark_ascending_ex}, immediately yield the following concrete corollary.

\begin{corollary} \label{cor: concrete}
  Suppose that $1 \leq n < \omega$, $H$ is a nonzero abelian group, 
  and $\vec{x} = \langle x_\alpha \mid \alpha < \omega_n 
  \rangle$ is a sequence of sets such that either
  \begin{itemize}
    \item $\vec{x}$ is strictly $\subseteq^*$-increasing; or
    \item $\vec{x}$ enumerates an independent family in $[\omega]^\omega$.
  \end{itemize}
  Then there exists a nontrivial coherent $H^{(\omega_n)}$-valued $n$-family indexed 
  along $\vec{x}$. If, moreover, $|H| \leq 2^{\omega_1}$ and 
  $\wkd(S^{k+1}_k)$ holds for all $k < n$, then there exists a nontrivial coherent 
  $H$-valued $n$-family indexed along $\vec{x}$.
\end{corollary}

\section{Ideals and their characteristics} \label{sec: ideals}

In this section, we isolate some properties of ideals $\mc{I}$ that ensure 
that they carry $\subseteq^*$-cofinal ascending sequences; this will then yield 
a proof of Theorem A.

The following cardinal characteristics associated with ideals were introduced in 
\cite{hernandez_hrusak} in the context of tall ideals on $\omega$. 
We reproduce the definition here in a more general context.

\begin{definition}
  Suppose that $\mc{I}$ is an ideal on a set $X$ properly extending the ideal of 
  all finite subsets of $X$ with no $\sub^*$-maximal set.
  \begin{itemize}
    \item $\mathrm{cof}^*(\mc{I}) = \min\{|\mc{Y}| \mid \mc{Y} \subseteq \mc{I} \wedge 
    (\forall x \in \mc{I})(\exists y \in \mc{Y})(x \subseteq^* y)\}$.
    \item $\mathrm{non}^*(\mc{I}) = \min\{|\mc{E}| \mid \mc{E} \subseteq \mc{I} \cap [X]^\omega 
    \wedge (\forall x \in \mc{I})(\exists e \in \mc{E})(|e \cap x| < \aleph_0)\}$.\footnote{In 
    \cite{hernandez_hrusak} the set $\mc{E}$ is only assumed to be a subset of $[X]^\omega$, 
    not $\mc{I} \cap [X]^\omega$. In the context of tall ideals on $\omega$ this does 
    not change the definition; in the general context, the definition we give here seems the 
    more useful one, at least for our purposes.}
  \end{itemize}
\end{definition}

The next lemma demonstrates the relevance of these characteristics to our results 
by showing that, if $\mathrm{cof}^*(\mc{I}) = \mathrm{non}^*(\mc{I})$, then 
$\mc{I}$ carries an ascending $\subseteq^*$-cofinal sequence.

\begin{lemma}
  Suppose that $\mc{I}$ is an ideal on a set $X$ properly extending the ideal of 
  all finite subsets of $X$ with no $\sub^*$-maximal set,
  and suppose that $\mathrm{cof}^*(\mc{I}) = \mathrm{non}^*(\mc{I}) = \kappa$. Then there is an 
  ascending sequence $\vec{x} = \langle x_\alpha \mid \alpha < \kappa \rangle$ of elements of 
  $\mc{I}$ that is cofinal in $(\mc{I}, \subseteq^*)$.
\end{lemma}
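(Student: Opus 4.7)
The plan is to build $\vec{x}$ and the witness sequence $\vec{e} = \langle e_w \mid w \in \Fn^+(\kappa) \rangle$ by simultaneous recursion of length $\kappa$, with $C$ taken to be $\kappa$ itself (treating $\kappa$ as an uncountable cardinal of uncountable cofinality, as is implicit in the ascending framework). Fix at the outset a $\sub^*$-cofinal enumeration $\{y_\alpha \mid \alpha < \kappa\}$ of $\mc{I}$, provided by $\cof^*(\mc{I}) = \kappa$. The recursion will maintain the inductive hypothesis that, after stage $\alpha$, for every finite $b \sub \alpha$ and every $w \in \Fn^+(\min(b))$ (with the convention $\min(\emptyset) := \alpha$) respected by $\vec{x} \restriction \alpha$, the set $e_w \cap \bigcap_{\beta \in b} x_\beta$ is infinite.

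At stage $\alpha$, I would choose $x_\alpha \in \mc{I}$ meeting three demands: (i) $y_\alpha \sub x_\alpha$, to secure eventual $\sub^*$-cofinality of $\vec{x}$; (ii) $x_\alpha \not\sub^* \bigcup_{\beta \in a} x_\beta$ for every $a \in [\alpha]^{<\omega}$, which delivers clause (b) of Definition~\ref{ascending_def}; and (iii) $|x_\alpha \cap e_w \cap \bigcap_{\beta \in b} x_\beta| = \aleph_0$ for every respected $w \in \Fn^+(\alpha)$ and finite $b \sub \alpha$ with $\max(\dom(w)) < \min(b)$, which both propagates the inductive hypothesis and delivers clause (c) at the eventual stage $\max(a) = \alpha$. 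For (ii), the family $\{\bigcup_{\beta \in a} x_\beta \mid a \in [\alpha]^{<\omega}\} \sub \mc{I}$ has cardinality less than $\cof^*(\mc{I}) = \kappa$ and hence fails to be $\sub^*$-cofinal, yielding some $x'' \in \mc{I}$ not $\sub^*$-below any member. For (iii), the inductive hypothesis says each of the fewer than $\kappa$ relevant sets is a countably infinite subset of some $e_w \in \mc{I}$, so $\mathrm{non}^*(\mc{I}) = \kappa$ produces an $x' \in \mc{I}$ meeting each in an infinite set. Take $x_\alpha := y_\alpha \cup x' \cup x'' \in \mc{I}$.

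After defining $x_\alpha$, for each $w \in \Fn^+(\alpha+1)$ with $\max(\dom(w)) = \alpha$, let $e_w$ be any countably infinite subset of $d^{\vec{x}}_w$ if $\vec{x} \restriction (\alpha+1)$ respects $w$, and $e_w := \emptyset$ otherwise. Clause (a) of Definition~\ref{ascending_def} is then built in by construction; (b) is demand (ii) at each stage; and (c) for $a \in [\kappa]^{<\omega}$ follows from demand (iii) applied at stage $\max(a)$ to $b := a \setminus \{\max(a)\}$, since the set produced is precisely $e_w \cap \bigcap_{\beta \in a} x_\beta$. Cofinality of $\vec{x}$ in $(\mc{I}, \sub^*)$ follows from $x_\alpha \supseteq y_\alpha$ together with the cofinality of $\{y_\alpha \mid \alpha < \kappa\}$.

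The heart of the argument, and its main obstacle, is the interplay between the two cardinal characteristics: $\cof^*(\mc{I})$ supplies the escape from a small $\sub^*$-downward-closed family (securing (ii)), while $\mathrm{non}^*(\mc{I})$ supplies an element meeting a small family of countable sets infinitely (securing (iii)). The inductive hypothesis must be phrased precisely so that every set to which $\mathrm{non}^*(\mc{I})$ is applied is genuinely a countable infinite member of $\mc{I}$, and one needs the size bound $|\Fn^+(\alpha)| \cdot |[\alpha]^{<\omega}| < \kappa$, which is immediate from $|\alpha| < \kappa$ and $\kappa$ being an uncountable cardinal, to keep the families small enough at every stage.
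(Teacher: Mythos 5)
Your proposal is correct and takes essentially the same approach as the paper: a recursion of length $\kappa$ that takes $C = \kappa$, unions a cofinal witness $y_\alpha$ together with an element supplied by $\mathrm{non}^*(\mc{I}) = \kappa$ (to keep the finitely many already-created $e_w \cap \bigcap_{\beta \in b} x_\beta$ infinite) and an element supplied by $\mathrm{cof}^*(\mc{I}) = \kappa$ (to avoid being $\subseteq^*$-covered by finitely many predecessors), and then picks countably infinite $e_w$ for the new respected $w$. The only cosmetic differences are that the paper phrases the inductive hypothesis as ``$\vec{e}\restriction\Fn^+(\beta)$ and $\beta$ witness that $\vec{x}\restriction\beta$ is ascending'' and packages the $\mathrm{non}^*$-target family as a set $\mc{E}_\beta$, whereas you spell out the inductive invariant directly; the substance and the order in which the two cardinal characteristics are invoked are identical.
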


\begin{proof}
  Let $\langle z_\alpha \mid \alpha < \kappa \rangle$ enumerate a $\subseteq^*$-cofinal 
  subset of $\mc{I}$.
  We will simultaneously construct $\vec{x} = \langle x_\alpha \mid \alpha < \kappa \rangle$ and 
  $\vec{e} = \langle e_w \mid w \in \Fn^+(\kappa) \rangle$ by recursion on $\alpha$ and 
  $\max(\dom(w))$ such that each $x_\alpha$ is in $\mc{I}$ and such that 
  $\langle e_w \mid w \in \Fn^+(\kappa) \rangle$ and $\kappa$ witness that 
  $\vec{x}$ is ascending. We will ensure that, for all $\alpha < \kappa$, we 
  have $z_\alpha \subseteq x_\alpha$, which will in turn ensure that $\vec{x}$ 
  is $\subseteq^*$-cofinal in $\mc{I}$. 
  To this end, suppose that $\beta < \kappa$ and we have constructed 
  $\vec{x} \restriction \beta$ and $\vec{e} \restriction \Fn^+(\beta)$ in such a way that 
  $\vec{e} \restriction \Fn^+(\beta)$ and $\beta$ satisfy clauses (2)(a--c) of Definition 
  \ref{ascending_def} for $\vec{x} \restriction \beta$. 
  We describe how to choose $x_\beta$ and $\langle e_w \mid 
  w \in \Fn^+(\beta+1) \wedge \beta \in \dom(w) \rangle$. 
  
  First, let $W_\beta = \{w \in \Fn^+(\beta) \mid \vec{x} \restriction \beta \text{ respects } 
  w\}$, and let 
  \[
    \mc{E}_\beta = \{e_w \cap \bigcap_{\alpha \in a} x_\alpha \mid 
    w \in W_\beta, \ a \in [\beta]^{<\omega}, \text{ and } \dom(w) < a\}.
  \]
  By the inductive hypothesis, we know that $\mc{E}_\beta \subseteq [X]^\omega$. 
  Moreover, since $\beta < \kappa = \mathrm{non}^*(\mc{I})$, we can find $x_{\beta,0} \in 
  \mc{I}$ such that $e \cap x_{\beta,0}$ is infinite for all $e \in \mc{E}_\beta$. 
  Since $\beta < \kappa = \mathrm{cof}^*(\mc{I})$, we can find $x_{\beta,1} \in \mc{I}$ 
  such that, for all $a \in [\beta]^{<\omega}$, we have $x_{\beta,1} \not\subseteq^* 
  \bigcup_{\alpha \in a} x_\alpha$. 
  
  Let $x_\beta = x_{\beta,0} \cup x_{\beta,1} \cup z_\beta$. Our choice of $x_{\beta,0}$ ensures that 
  this maintains clause (2)(c) of Definition \ref{ascending_def}, and our choice of 
  $x_{\beta,1}$ ensures that this maintains clause (2)(b) of the same definition. 
  Now, for every $w \in \Fn^+(\beta + 1)$ such that $\beta \in \dom(w)$ and 
  $\vec{x} \restriction (\beta + 1)$ respects $w$, let $e_w$ be an arbitrary element of 
  $[d^{\vec{x} \restriction (\beta + 1)}_w]^{\aleph_0}$. This completes the inductive step 
  of the construction and hence the proof of the lemma.
\end{proof}

An ideal of particular interest to us is the ideal $\emptyset \times \mathrm{Fin}$, which 
can be concretely defined as an ideal on $\omega \times \omega$ in the following way. 
First, for all functions $f\colon\omega \ra \omega$, recall that $I_f = \{(k,n) \in \omega \times 
\omega \mid n < f(k)\}$. Then $\emptyset \times \mathrm{Fin}$ is the set 
\[
  \{x \subseteq \omega \times \omega \mid (\exists f \in {^\omega}\omega)(x \subseteq I_f)\}.
\]
Recall that $\mathfrak{d}$ denotes the \emph{dominating number}, i.e., the least size of a cofinal 
subset of $({^\omega}\omega, \leq^*)$.

\begin{proposition} \label{prop: d_prop}
  $\mathfrak{d} = \mathrm{cof}^*(\emptyset \times \mathrm{Fin}) 
  = \mathrm{non}^*(\emptyset \times \mathrm{Fin})$.
\end{proposition}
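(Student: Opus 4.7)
The plan is to establish the double equality by verifying each of the four inequalities: $\mathrm{cof}^*(\emptyset \times \mathrm{Fin}) \leq \mathfrak{d}$, $\mathfrak{d} \leq \mathrm{cof}^*(\emptyset \times \mathrm{Fin})$, $\mathrm{non}^*(\emptyset \times \mathrm{Fin}) \leq \mathfrak{d}$, and $\mathfrak{d} \leq \mathrm{non}^*(\emptyset \times \mathrm{Fin})$. The central dictionary relating $\mathfrak{d}$ to the ideal-theoretic characteristics is that $\{I_f \mid f \in {^\omega}\omega\}$ is already $\sub$-cofinal in $\emptyset \times \mathrm{Fin}$, while conversely every $x \in \emptyset \times \mathrm{Fin}$ is governed by the ``upper envelope'' function $f_x(k) := \max(\{n+1 \mid (k,n) \in x\} \cup \{0\})$, which satisfies $x \sub I_{f_x}$.

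Three of the four inequalities are essentially immediate. For $\mathrm{cof}^* \leq \mathfrak{d}$, apply the map $f \mapsto I_f$ to any dominating family. For $\mathfrak{d} \leq \mathrm{cof}^*$, apply $x \mapsto f_x$ to a $\sub^*$-cofinal family $\{x_\alpha\}$; given $g \in {^\omega}\omega$ and $\alpha$ with $I_g \sub^* x_\alpha$, outside a finite set of columns the entire rectangle $\{k\} \times g(k)$ lies in $x_\alpha$, forcing $f_{x_\alpha}(k) \geq g(k)$. For $\mathrm{non}^* \leq \mathfrak{d}$, given a dominating family $\{f_\alpha\}$, the graphs $e_\alpha := \{(k, f_\alpha(k)) \mid k < \omega\}$ are infinite members of the ideal, and any $x \sub I_g$ is almost disjoint from $e_\alpha$ as soon as $g \leq^* f_\alpha$, since then $(k, f_\alpha(k)) \notin I_g$ for almost every $k$.

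The one delicate step, which I expect to be the main obstacle, is $\mathfrak{d} \leq \mathrm{non}^*$. The naive attempt of assigning the upper envelope $f_{e_\alpha}$ to each member of a witness $\mc{E} = \{e_\alpha\}$ fails, because an $e_\alpha$ whose first-coordinate support is sparse will have $f_{e_\alpha}(k) = 0$ on long stretches, so $\{f_{e_\alpha}\}$ need not be dominating. The fix exploits the fact that each $e_\alpha$, being an infinite set with finite vertical sections, must have infinite (hence unbounded) first-coordinate projection $A_\alpha \sub \omega$. Replace $f_{e_\alpha}$ by $h_\alpha(k) := \max\{n+1 \mid (k^+_\alpha, n) \in e_\alpha\}$, where $k^+_\alpha$ is the least member of $A_\alpha$ that is $\geq k$. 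To verify that $\{h_\alpha\}$ is dominating, given $g \in {^\omega}\omega$ I would pass to the running maximum $g'(k) := \max_{j \leq k}(g(j)+1)$, choose $\alpha$ with $|e_\alpha \cap I_{g'}|$ finite, and let $K$ bound the first coordinates of $e_\alpha \cap I_{g'}$. For $k \geq K$ we have $k^+_\alpha \geq K$, and any $n$ with $(k^+_\alpha, n) \in e_\alpha$ must satisfy $n \geq g'(k^+_\alpha) \geq g'(k) > g(k)$, lest it produce a point of $e_\alpha \cap I_{g'}$ with first coordinate above $K$; consequently $h_\alpha(k) > g(k)$ for all sufficiently large $k$, so $g \leq^* h_\alpha$, which closes the argument.
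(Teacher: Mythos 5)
Your proof is correct, and all four inequalities go through. Three of them coincide with the paper's argument: the identity $\mathfrak{d} = \mathrm{cof}^*(\emptyset \times \mathrm{Fin})$ via the correspondence $f \leftrightarrow I_f$, and $\mathrm{non}^* \leq \mathfrak{d}$ via graphs of functions from a dominating family (the paper calls them $I^+_f$, you call them $e_\alpha$; same object).

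Where you genuinely diverge is $\mathfrak{d} \leq \mathrm{non}^*$, which you correctly flag as the delicate step. The paper argues by contrapositive: given $\mc{E} \sub \emptyset \times \mathrm{Fin}$ of size $< \mathfrak{d}$, it assigns to each $e \in \mc{E}$ a ``staircase'' function $g_e$ (constant on the intervals of $\omega$ cut out by the first-coordinate projection $a_e$, with value the envelope of $e$ at the right endpoint), finds a monotone $h$ not eventually dominated by any $g_e$, and shows $I_h$ meets every $e$ infinitely — so $\mc{E}$ cannot witness $\mathrm{non}^*$. You instead argue directly: from a witness $\mc{E}$ you extract a dominating family $\{h_\alpha\}$ of the same cardinality, using the ``next nonempty column'' reparametrization $k \mapsto k^+_\alpha$ and a running-maximum $g'$ to defeat the sparseness of $A_\alpha$. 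The two arguments are dual: both have to handle the possibility that the first-coordinate support of a member of $\mc{E}$ is sparse, and both do so by re-indexing along that support and introducing a monotone auxiliary function — the paper on the dominating side, you on the almost-disjointness side. Your version is slightly more economical in that it avoids the interval bookkeeping $J^e_n$ and produces the dominating family explicitly from the witness, but neither approach is simpler in any essential way.
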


\begin{proof}
  The fact that $\mathfrak{d} = \mathrm{cof}^*(\emptyset \times \mathrm{Fin})$ follows 
  immediately from the definitions. Let us show that $\mathfrak{d} = \mathrm{non}^*(\emptyset 
  \times \mathrm{Fin})$. First, suppose that $\mc{F} \subseteq {^\omega}\omega$ has size
  $\mathfrak{d}$ and is cofinal in 
  $({^\omega}\omega, \leq^*)$. For each $f \in {^\omega}\omega$, let $I^+_f$ denote the graph of 
  $f$, i.e., the set $\{(k,f(k)) \mid k < \omega\}$. Let $\mc{E} = \{I^+_f \mid f \in \mc{F}\}$. 
  We claim that $\mc{E}$ witnesses that $\mathrm{non}^*(\emptyset \times \mathrm{Fin}) \leq 
  \mathfrak{d})$. To see this, fix an arbitrary $x \in \emptyset \times \mathrm{Fin}$, and 
  find $g \in {^\omega}\omega$ such that $x \subseteq I_g$. 
  Then $I^+_g \in \mc{E}$ and $I_g \cap I^+_g = \emptyset$. Thus, 
  $\mathrm{non}^*(\emptyset \times \mathrm{Fin}) \leq \mathfrak{d}$.
  
  For the other inequality, suppose that $\mc{E}$ is a family of infinite elements of 
  $\emptyset \times \mathrm{Fin}$ with $|\mc{E}| < \mathfrak{d}$. We must find $x \in 
  \emptyset \times \mathrm{Fin}$ such that $|e \cap x| = \aleph_0$ for all $e \in \mc{E}$.
  For each $e \in \mc{E}$, fix $f_e \in {^\omega}\omega$ such that $e \subseteq I_{f_e}$. 
  Let $a_e = \{k < \omega \mid (\exists m < \omega)((k,m) \in e)\}$. Since $e$ is an infinite 
  element of $\emptyset \times \mathrm{Fin}$, $a_e$ must be infinite. Moreover, $a_e$ 
  partitions $\omega$ into pairwise disjoint intervals $\langle J^e_n \mid n < \omega \rangle$, 
  where $J^e_0 = [0, a_e(0)]$ and, for all $n < \omega$, $J^e_{n+1} = (a_e(n), a_e(n+1)]$. 
  Define a function $g_e \in {^\omega}\omega$ as follows. For all $k < \omega$, let $n < \omega$ 
  be such that $k \in J^e_n$, and then let $g_e(k) = f_e(a_e(n))$.
  
  Using the fact that $|\mc{E}| < \mathfrak{d}$, find $h \in {^\omega}\omega$ such that, 
  for all $e \in \mc{E}$, we have $h \not\leq^* g_e$. By increasing $h$ if necessary, we 
  may assume that it is weakly increasing, i.e., $h(k_0) \leq h(k_1)$ for all $k_0 < k_1 < \omega$.
  For each $e \in \mc{E}$, let $b_e := \{k < \omega \mid g_e(k) < h(k)\}$, and let 
  $c_e := \{n < \omega \mid b_e \cap J^e_n\} \neq \emptyset$. Since each $J^e_n$ is finite, we 
  have $c_e \in [\omega]^\omega$. Moreover, for all $n \in c_e$, we can fix $k \in b_e 
  \cap J^e_n$ and conclude that
  \[
    f_e(a_e(n)) = g_e(k) < h(k) \leq h(a_e(n)),
  \]
  and hence there is $m \leq h(a_e(n))$ such that $(a_e(n),m) \in e$. In particular, 
  letting $x = I_h$, it follows that $x \in \emptyset \times \mathrm{Fin}$ and 
  $|e \cap x| = \aleph_0$ for all $e \in \mc{E}$.
\end{proof}

Putting the results of this section together with Theorems \ref{thm: ascending} and 
\ref{thm: ascending_wide} immediately yields the following corollary.

\begin{corollary}
  Suppose that $1 \leq n < \omega$, $\mc{I}$ is an ideal on a set $X$ properly extending 
  the ideal of all finite subsets of $X$ with no $\sub^*$-maximal set. Suppose also 
  that $\mathrm{cof}^*(\mc{I}) = \mathrm{non}^*(\mc{I}) = \omega_n$. Then
  \begin{enumerate}
    \item $\lim^n \mathbf{A}_{\mc{I}}[\bb{Z}^{(\omega_n)}] \neq 0$;
    \item if $\wkd(S^{k+1}_k)$ holds for all $k < n$, then $\lim^n \mb{A}_{\mc{I}} \neq 0$.
  \end{enumerate}
\end{corollary}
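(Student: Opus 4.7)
The plan is to chain together the lemma at the start of Section \ref{sec: ideals}, the main theorems of Section \ref{sec: ascending}, the extension result from Section \ref{sec: coherence}, and Fact \ref{fact: lim_coh}. First, under the hypothesis that $\mathrm{cof}^*(\mc{I}) = \mathrm{non}^*(\mc{I}) = \omega_n$, the lemma proved earlier in this section furnishes an ascending sequence $\vec{x} = \langle x_\alpha \mid \alpha < \omega_n \rangle$ of elements of $\mc{I}$ that is $\sub^*$-cofinal in $(\mc{I}, \sub^*)$. Regarding $\vec{x}$ either as the sequence itself or as the collection $\mc{J} := \{x_\alpha \mid \alpha < \omega_n\}$, we obtain a $\sub^*$-cofinal subset of $\mc{I}$ to which the machinery of Section \ref{sec: ascending} directly applies.

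For clause (1), apply Theorem \ref{thm: ascending_wide} to $\vec{x}$ with $H = \bb{Z}$ to obtain a nontrivial coherent $\bb{Z}^{(\omega_n)}$-valued $n$-family $\Phi = \langle \varphi_b \mid b \in (\omega_n)^n \rangle$ indexed along $\vec{x}$. Reindexing this along $\mc{J}$ via the bijection $\alpha \mapsto x_\alpha$ yields a nontrivial coherent $\bb{Z}^{(\omega_n)}$-valued $n$-family indexed along $\mc{J}$. By Proposition \ref{prop: extension} (and as noted explicitly in Remark \ref{remark: extension}), this family extends to a coherent $n$-family $\Phi'$ indexed along $\mc{I}$, and the nontriviality of $\Phi$ implies the nontriviality of $\Phi'$ (indeed, by Proposition \ref{prop: cofinal_subset}, if $\Phi'$ were trivial, then so would be its restriction $\Phi' \restriction \mc{J} = \Phi$). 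An application of Fact \ref{fact: lim_coh} then gives $\lim^n \mb{A}_{\mc{I}}[\bb{Z}^{(\omega_n)}] \neq 0$.

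For clause (2), the argument is identical except that we invoke Theorem \ref{thm: ascending} in place of Theorem \ref{thm: ascending_wide}: the additional assumption that $\wkd(S^{k+1}_k)$ holds for all $k < n$ (together with the trivially satisfied size bound $|\bb{Z}| \leq 2^{\omega_1}$) allows us to produce a nontrivial coherent $\bb{Z}$-valued $n$-family indexed along $\vec{x}$. Proposition \ref{prop: extension}, Proposition \ref{prop: cofinal_subset}, and Fact \ref{fact: lim_coh} then deliver $\lim^n \mb{A}_{\mc{I}} \neq 0$ exactly as in clause (1).

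There is essentially no obstacle here, since all of the technical work has been done in the preceding sections; the corollary is just a matter of assembling the ingredients. The only minor point to be careful about is confirming that the hypotheses of the theorems from Section \ref{sec: ascending} are met by the sequence produced by the lemma (which they are by construction) and that the passage from $\mc{J}$ back to $\mc{I}$ preserves nontriviality, but both of these are immediate from the results cited.
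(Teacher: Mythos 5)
Your proposal is correct and follows the same route as the paper, which simply states that the corollary follows by "putting the results of this section together with Theorems \ref{thm: ascending} and \ref{thm: ascending_wide}"; you have just spelled out the assembly (lemma yields the ascending $\sub^*$-cofinal sequence, Theorems \ref{thm: ascending_wide} and \ref{thm: ascending} yield the nontrivial coherent families, and Remark \ref{remark: extension} together with Fact \ref{fact: lim_coh} convert these into nonvanishing of the derived limits). The one point worth making explicit, which your reindexing step tacitly uses, is that the sequence $\vec{x}$ produced by the lemma is injective (this follows from clause (2)(b) of Definition \ref{ascending_def} with $C = \omega_n$), so that $\mc{J} = \{x_\alpha \mid \alpha < \omega_n\}$ really has size $\omega_n$ and the passage between "indexed along $\vec{x}$" and "indexed along $\mc{J}$" is harmless.
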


Setting $\mc{I} = \emptyset \times \mathrm{Fin}$ and invoking Proposition \ref{prop: d_prop}
and Fact \ref{fact: cofinal_lim} then yields the following important special case, which is 
Theorem A from the introduction.

\begin{corollary}
  Suppose that $1 \leq n < \omega$ and $\mathfrak{d} = \omega_n$. Then
  \begin{enumerate}
    \item $\lim^n \mathbf{A}[\bb{Z}^{(\omega_n)}] \neq 0$;
    \item if $\wkd(S^{k+1}_k)$ holds for all $k < n$, then $\lim^n \mathbf{A} \neq 0$.
  \end{enumerate}
  In particular, if $\lim^n \mb{A}[H] = 0$ for all $1 \leq n < \omega$ and all abelian 
  groups $H$, then $2^{\aleph_0} > \aleph_\omega$.
\end{corollary}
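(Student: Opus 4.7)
The plan is to assemble the preceding results, applied to the specific ideal $\mc{I} = \emptyset \times \mathrm{Fin}$ on $\omega \times \omega$. First I would check that this ideal properly extends the finite-subsets ideal on $\omega \times \omega$ and has no $\subseteq^*$-maximal element (both easy: the sets $I_f$ for $f$ unbounded witness this). Then Proposition \ref{prop: d_prop} gives $\mathrm{cof}^*(\emptyset \times \mathrm{Fin}) = \mathrm{non}^*(\emptyset \times \mathrm{Fin}) = \mathfrak{d}$, which is $\omega_n$ by hypothesis. Applying the preceding corollary then yields $\lim^n \mb{A}_{\emptyset \times \mathrm{Fin}}[\bb{Z}^{(\omega_n)}] \neq 0$, and also $\lim^n \mb{A}_{\emptyset \times \mathrm{Fin}} \neq 0$ under the weak diamond hypotheses.

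Next I would transfer these nonvanishing statements to the inverse system $\mb{A}[H]$ as defined in the paper, which is indexed along $\{I_f \mid f \in {^\omega}\omega\}$. By the very definition of $\emptyset \times \mathrm{Fin}$, the collection $\{I_f \mid f \in {^\omega}\omega\}$ is $\subseteq^*$-cofinal (in fact $\subseteq$-cofinal) in the ideal. Fact \ref{fact: cofinal_lim} therefore gives the equivalence $\lim^n \mb{A}_{\emptyset \times \mathrm{Fin}}[H] = 0$ iff $\lim^n \mb{A}[H] = 0$, for any abelian group $H$. This delivers clauses (1) and (2) of the corollary.

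For the ``in particular'' clause I would argue contrapositively. Suppose $2^{\aleph_0} \leq \aleph_\omega$. Since $\mathfrak{d} \leq 2^{\aleph_0}$ and, by a classical theorem of Hechler, $\cf(\mathfrak{d}) > \omega$, we have $\mathfrak{d} = \omega_n$ for some $1 \leq n < \omega$. Clause (1) then gives $\lim^n \mb{A}[\bb{Z}^{(\omega_n)}] \neq 0$, contradicting the assumed universal vanishing. Hence $2^{\aleph_0} > \aleph_\omega$, equivalently $2^{\aleph_0} \geq \aleph_{\omega+1}$, as stated in Theorem~A.

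There is no real obstacle to overcome; the content has been front-loaded into the earlier corollary and into Proposition \ref{prop: d_prop}. The only point requiring any care is the cofinality of $\{I_f \mid f \in {^\omega}\omega\}$ inside $\emptyset \times \mathrm{Fin}$ (so that Fact \ref{fact: cofinal_lim} applies), and the invocation of $\cf(\mathfrak{d}) > \omega$ in the ``in particular'' clause to rule out $\mathfrak{d} = \aleph_\omega$.
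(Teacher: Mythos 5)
Your proposal matches the paper's argument exactly: the paper proves this corollary by applying the preceding corollary to $\mc{I} = \emptyset \times \mathrm{Fin}$, invoking Proposition~\ref{prop: d_prop} for the equality $\mathrm{cof}^*(\emptyset\times\mathrm{Fin}) = \mathrm{non}^*(\emptyset\times\mathrm{Fin}) = \mathfrak{d}$, and using Fact~\ref{fact: cofinal_lim} to pass from $\mb{A}_{\emptyset \times \mathrm{Fin}}[H]$ to $\mb{A}[H]$ via the $\subseteq$-cofinality of $\{I_f \mid f \in {}^\omega\omega\}$ in the ideal. Your handling of the ``in particular'' clause, using $\mathfrak{d} \leq 2^{\aleph_0}$ together with $\cf(\mathfrak{d}) > \omega$ to rule out $\mathfrak{d} = \aleph_\omega$, is the standard and correct way to fill in the detail the paper leaves implicit.
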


We end this section by noting one other important special case. Given a regular uncountable 
cardinal $\kappa$, let $\mc{I}_\kappa$ denote the ideal $[\kappa]^{<\kappa}$. It is easily 
verified that $\mathrm{cof}^*(\mc{I}_\kappa) = \mathrm{non}^*(\mc{I}_\kappa) = \kappa$. 
Our framework therefore immediately yields the following corollary.

\begin{corollary} \label{cor: omega_n}
  Suppose that $1 \leq n < \omega$. Then
  \begin{enumerate}
    \item $\lim^n \mb{A}_{\mc{I}_{\omega_n}}[\bb{Z}^{(\omega_n)}] \neq 0$;
    \item if $\wkd(S^{k+1}_k)$ holds for all $k < n$, then $\lim^n \mb{A}_{\mc{I}_{\omega_n}} \neq 0$.
  \end{enumerate}
\end{corollary}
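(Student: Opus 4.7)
The plan is to read this as the special case $\kappa = \omega_n$ of the general framework and to invoke the preceding corollary directly, so the only real work is to verify the parenthetical claim ``$\mathrm{cof}^*(\mc{I}_\kappa) = \mathrm{non}^*(\mc{I}_\kappa) = \kappa$'' together with the structural hypotheses on $\mc{I}_\kappa$. First I would note the easy structural facts: $\mc{I}_\kappa$ properly extends the finite ideal (since $\omega \in \mc{I}_\kappa$), and it has no $\sub^*$-maximal element, because for any $x \in \mc{I}_\kappa$ the ordinal $|x|^+ < \kappa$ is in $\mc{I}_\kappa$ and, having cardinality strictly greater than $|x|$, cannot be $\sub^*$-contained in $x$.

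For $\mathrm{cof}^*(\mc{I}_\kappa) \leq \kappa$, the sequence $\langle \alpha \mid \alpha < \kappa \rangle$ (identifying ordinals with their sets of predecessors) is $\sub$-cofinal in $\mc{I}_\kappa$, since any $x \in [\kappa]^{<\kappa}$ is bounded below $\kappa$ by the regularity of $\kappa$. For the matching lower bound, let $\mc{Y} \sub \mc{I}_\kappa$ with $|\mc{Y}| < \kappa$; regularity of $\kappa$ gives $\mu := \sup\{|y|^+ \mid y \in \mc{Y}\} < \kappa$, and then $\mu \in \mc{I}_\kappa$ but $\mu \not\sub^* y$ for any $y \in \mc{Y}$, because $|\mu \setminus y| \geq |\mu| > |y|$ is infinite.

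For $\mathrm{non}^*(\mc{I}_\kappa) \leq \kappa$, the family $\mc{E} := \{[\omega \cdot \alpha, \omega \cdot (\alpha+1)) \mid \alpha < \kappa\}$ consists of pairwise disjoint countable members of $\mc{I}_\kappa$; any $x \in \mc{I}_\kappa$ is bounded by some $\omega \cdot \alpha$ and so has empty (hence finite) intersection with the $\alpha$th member of $\mc{E}$. For the lower bound, if $\mc{E} \sub \mc{I}_\kappa \cap [\kappa]^\omega$ has size $<\kappa$, then $\bigcup \mc{E}$ has cardinality $<\kappa$ and is therefore bounded by some $\alpha < \kappa$; taking $x := \alpha \in \mc{I}_\kappa$, we have $e \cap x = e$ (infinite) for every $e \in \mc{E}$, so $\mc{E}$ fails to witness the defining property of $\mathrm{non}^*$.

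With these verifications in hand, the hypotheses of the previous corollary are met by $\mc{I}_{\omega_n}$ with $\mathrm{cof}^*(\mc{I}_{\omega_n}) = \mathrm{non}^*(\mc{I}_{\omega_n}) = \omega_n$, and clauses (1) and (2) follow immediately. There is no substantive obstacle here; the content of the corollary lies entirely in the general framework of Sections \ref{sec: ascending} and \ref{sec: ideals}.
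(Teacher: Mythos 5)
Your proposal is correct and takes exactly the same route as the paper: the paper's argument consists solely of the remark that $\mathrm{cof}^*(\mc{I}_\kappa)=\mathrm{non}^*(\mc{I}_\kappa)=\kappa$ is ``easily verified'' and an appeal to the preceding corollary with $\kappa=\omega_n$, and your work simply fills in those verifications. The only nitpick is that the phrase ``$|x|^+$ cannot be $\sub^*$-contained in $x$'' (and similarly $\mu\not\sub^* y$ in the $\mathrm{cof}^*$ lower bound) silently assumes the relevant set is infinite, so the finite-$x$ case should be absorbed by, e.g., replacing $|x|^+$ with $\max(|x|^+,\omega)$ --- but this is trivial and the proof is otherwise complete.
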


This corollary is not new, as the inverse systems $\mb{A}_{\mc{I}_\kappa}[H]$ have been somewhat 
extensively studied, although under different names (for instance, in \cite{svhdlwlc} they 
are denoted $\mb{C}(\kappa, H)$). The first derived limits of these systems are also 
familiar to set theorists, although in a different guise: the assertion ``$\lim^1 
\mb{A}_{\mc{I}_\kappa} \neq 0$" is equivalent to the existence of a coherent 
$\kappa$-Aronszajn subtree of ${^{<\kappa}}\omega$.

Clause (1) of Corollary \ref{cor: omega_n} is implicit in 
\cite{mitchell}; for a more set-theoretic proof, see \cite{bergfalk_alephs}. Clause (2) has, 
to the best of our knowledge, not explicitly appeared anywhere, but it follows from the 
techniques in \cite{velickovic2021non}. We include the corollary here to indicate that these 
systems can be incorporated into the general framework we develop in this paper.

\section{Simultaneous nonvanishing from squares and weak diamonds} \label{sec: squares}

In this final section, we prove Theorem B from the introduction. We first recall the following 
definition.

\begin{definition}

Suppose that $\lambda$ is a regular uncountable cardinal and $S \sub \lambda$ is stationary. The principle $\square(\lambda, S)$ asserts the existence of a sequence $\vec{C} = \langle C_\alpha \mid \alpha < \lambda \rangle$ such that: 

\begin{enumerate}

\item for every limit ordinal $\alpha < \lambda$, $C_\alpha$ is a club in $\alpha$;

\item for all limit ordinals $\alpha < \beta < \lambda$, if $\alpha \in \Lim(C_\beta)$, then $C_\beta \cap \alpha = C_\alpha$;

\item for every limit ordinal $\alpha < \lambda$, $C_\alpha \cap S = \emptyset$.

\end{enumerate}
Such a sequence is called a $\square(\lambda, S)$-sequence.
\end{definition}

\begin{remark}

Condition $(3)$ implies that there exists no club $D \sub \lambda$ such that for all  $\alpha \in \Lim(D)$, $D \cap \alpha = C_\alpha$, i.e., there is no \emph{thread} 
through $\vec{C}$. For a proof, if it were not so, let $\gamma \in D \cap S$ and $\alpha \in \Lim(D) \setminus (\gamma +1)$. Then $\gamma \in D \cap \alpha = C_\alpha$, a contradiction. In particular, if $S \subseteq \lambda$ is stationary, then a 
$\square(\lambda, S)$-sequence is \emph{a fortiori} a $\square(\lambda)$-sequence.

\end{remark}

If $\delta$ is an ordinal, then a \emph{$\delta$-chain} in $({^{\omega}}\omega, \leq^*)$ 
is a sequence $\vec{f} = \langle f_\alpha \mid \alpha < \delta \rangle$ such that, 
for all $\alpha < \beta < \delta$, we have $f_\alpha <^* f_\beta$. If $H$ is an abelian 
group and $\vec{f}$ is a $\delta$-chain in $({^{\omega}}\omega, \leq^*)$, then 
when we speak of an \emph{$H$-valued $n$-family indexed over $\vec{f}$}, we mean an 
$n$-family of the form 
\[
  \left\langle \varphi_a \colon \bigcap_{\alpha \in b} I_{f_\alpha} \ra H 
  ~ \middle\vert ~ a \in [\delta]^n \right\rangle.
\]

\begin{lemma}
\label{lemma_stepup}

Suppose that $\kappa < \lambda  $ are uncountable regular cardinals, $n$ is a positive integer, and $H$ is an abelian group with $\vert H \vert \leq 2^\kappa$. Suppose moreover that 

\begin{enumerate}
\item for every $\kappa$-chain in $({}^\omega \omega, \leq^\ast)$ there exists a nontrivial coherent $H$-valued $n$-family indexed over that chain;

\item there exists a stationary $S \sub S^\lambda_\kappa$ such that both $\wl(S)$ and $\square(\lambda, S)$ hold.
\end{enumerate}

Then for every $\lambda$-chain in $({}^\omega \omega, \leq^\ast)$ there exists a nontrivial coherent $H$-valued $(n+1)$-family indexed over that chain.

\end{lemma}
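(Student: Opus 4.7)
The plan is to mimic the weak diamond plus recursive branching construction of Theorem \ref{thm: ascending}, with two essential modifications: the $\kappa$-chain hypothesis (1) replaces the inductive hypothesis on nontrivial lower-dimensional coherent families, and a $\square(\lambda, S)$-sequence replaces Goblot's vanishing theorem for producing trivializations at limits beyond $\omega_{n+1}$. Concretely, fix a $\lambda$-chain $\vec{f} = \langle f_\alpha \mid \alpha < \lambda\rangle$, a $\square(\lambda, S)$-sequence $\vec{C} = \langle C_\alpha \mid \alpha \in \Lim(\lambda)\rangle$, and (eventually) a coloring $F: {^{<\lambda}}2 \to 2$ to which $\wkd(S)$ will be applied. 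For each $\sigma \in {^{<\lambda}}2$ supported in $S$, I will recursively build a coherent $H$-valued $(n+1)$-family $\Phi^\sigma$ indexed over $\vec{f} \restriction \lh(\sigma)$ together with a designated trivialization $\Psi^\sigma$ of $\Phi^\sigma \restriction \restriction I_{f_{\lh(\sigma)}}$, so that $\sigma \sqsubseteq \tau$ implies $\Phi^\sigma \sqsubseteq \Phi^\tau$. The branch $g \in {^\lambda}2$ produced by $\wkd(S)$ will then yield $\Phi^g = \bigcup_\alpha \Phi^{g \restriction \alpha}$ as the desired nontrivial coherent $(n+1)$-family.

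The recursive step splits into three cases at $\alpha := \lh(\sigma)$. At successor stages, we extend $\Phi^\sigma$ using $\Psi^\sigma$ as in Definition \ref{defn: ext_def}, then obtain the next trivialization by the same $\square$-gluing used at limits. At a limit $\alpha \in S$, since $\otp(C_\alpha) = \kappa$, hypothesis (1) applied to the $\kappa$-chain $\langle f_\beta \mid \beta \in C_\alpha\rangle$ supplies a nontrivial coherent $H$-valued $n$-family $T^\alpha$; by Proposition \ref{nonequiv_prop}, the trivializations $\Psi^\sigma$ and $\Psi^\sigma + T^\alpha$ of $\Phi^\sigma \restriction \restriction I_{f_\alpha}$ are inequivalent, and $\sigma(\alpha) \in 2$ selects which one to feed into the extension operation. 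At a limit $\alpha \in \Lim(\lambda) \setminus S$, we rely on the coherence $C_{\beta'} \cap \beta = C_\beta$ for $\beta \in \Lim(C_{\beta'})$ to glue the previously constructed trivializations $\Psi^{\sigma \restriction \beta}$ (for $\beta \in C_\alpha$) into a trivialization $\Psi^\sigma$ of $\Phi^\sigma \restriction \restriction I_{f_\alpha}$.

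The weak diamond step then mirrors Theorem \ref{thm: ascending}. Fix a $\subseteq$-increasing continuous filtration $\langle E_\alpha \mid \alpha < \lambda\rangle$ of $\bigcup_{\beta < \lambda} I_{f_\beta}$ with $|E_\alpha| \leq \kappa$ absorbing all relevant supports. Using $|H| \leq 2^\kappa$, fix a coding function $G$ from ${^\alpha}2$ onto candidate partial $n$-family trivializations supported on $E_\alpha$, and define $F(\rho)$ so that the bit $F(\rho)$ blocks $G(\rho)$ from extending to an element of $\mathrm{Triv}_H(\Phi^{\sigma^\frown \langle F(\rho)\rangle})$, invoking Proposition \ref{nonextension_prop} together with the appropriate analog of Claim \ref{claim: unique_triv}. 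The resulting branch $g$ then defeats every putative global trivialization of $\Phi^g$ by catching its code at stationarily many $\alpha \in S$.

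The principal obstacle is the limit gluing at $\alpha \in \Lim(\lambda) \setminus S$. Since $|\alpha|$ may exceed $\omega_{n+1}$, Goblot's theorem (Proposition \ref{prop: goblot_var}) no longer supplies $\Psi^\sigma$ directly as it did in Theorem \ref{thm: ascending}. Instead, I must strengthen the recursive hypothesis so that each $\Psi^{\sigma \restriction \beta}$ depends, up to finite error, only on $\vec{C} \restriction \beta$ and on the bits of $\sigma$ at $S$-stages below $\beta$; the equation $C_{\beta'} \cap \beta = C_\beta$ then forces the restrictions to match telescopically, producing a well-defined gluing along $C_\alpha$. Verifying that this gluing respects coherence mod finite, is preserved through the bifurcations at $S$-stages, and remains inequivalent to the $T^\alpha$-perturbations at later $S$-stages is the technical heart of the stepping-up argument.
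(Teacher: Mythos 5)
Your overall strategy — recursively branching along a $\square(\lambda, S)$-sequence, bifurcating by a nontrivial coherent $n$-family supplied by hypothesis (1) at each $\alpha \in S$, and then invoking $\wl(S)$ to pick a branch that defeats all trivializations via the coding and \mbox{Claims \ref{claim_twisting}--\ref{claim_atmostone}} — is the same strategy as the paper's, and the broad outline is sound. However, there are two genuine gaps in how you set up the $\square$-part of the recursion.

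First, the recursive invariant you propose is not the right one. You want to carry a designated trivialization $\Psi^\sigma \in \mathrm{Triv}_H(\Phi^\sigma \restriction\restriction I_{f_{\lh(\sigma)}})$ at \emph{every} stage (including successors), with the vague requirement that it ``depends, up to finite error, only on $\vec{C}\restriction\beta$ and the $S$-bits.'' As stated this is nearly tautological (since $\sigma$ is already supported on $S$ and $\vec{C}$ is fixed), and what is actually needed — compatibility of $\Psi^{\sigma\restriction\beta}$ with $\Psi^{\sigma\restriction\beta'}$ for $\beta < \beta'$ both in $\Lim(C_\alpha)$, plus compatibility with whatever you put at the interleaved successor stages — is not automatic and not forced by $C_{\beta'}\cap\beta = C_\beta$. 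The paper sidesteps this by stating the invariant on the \emph{family values themselves}: condition $(\dagger)$ requires $\varphi^\sigma_{a\cup\{\beta\}} =^* \varphi^\sigma_{a\cup\{\gamma\}}$ only for non-$S$ limit $\gamma$ and $\beta \in \Lim(C_\gamma)$. Because $(\dagger)$ mentions only limit indices, successor steps are entirely free (``any coherent extension will do''), and no trivialization data has to be threaded through the construction at all.

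Second, your gluing step ``glue $\Psi^{\sigma\restriction\beta}$ for $\beta\in C_\alpha$'' does not cover the case $\sup(\Lim(C_\alpha)) < \alpha$, where $\Lim(C_\alpha)$ is bounded (possibly empty) and $\cf(\alpha) = \omega$. There is nothing to telescope against, and the paper needs a separate hybrid argument here (its Case 2): take $\gamma = \max(\Lim(C_\alpha))$, obtain a genuine trivialization of $\Phi^\sigma$ via Goblot (Proposition \ref{prop: goblot_var}, applicable because $\cf(\alpha)=\omega$), and define $\varphi^{\sigma,0}_{a\cup\{\alpha\}}$ by cases on whether the argument lies in $I_{f_\gamma}$ — using the square-glued data on $I_{f_\gamma}$ and the Goblot trivialization off it. Your proposal treats all non-$S$ limit stages by a single gluing recipe; that recipe fails at these $\omega$-cofinal stages, and this is not a cosmetic omission but a case that must be handled separately for $(\dagger)$ and coherence to go through. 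Finally, as you acknowledge, the verifications that the gluing is coherent mod finite and that $(\dagger)$ survives the $S$-bifurcations are deferred; these are precisely where the bookkeeping with signs and the split into paper's Cases 2--4 is needed.
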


\begin{proof}

Fix a $\lambda$-chain $\vec{f} = \langle f_\alpha \mid \alpha < \lambda \rangle$ in $({^{\omega}}\omega, \leq^*)$ and a $\square(\lambda, S)$-sequence $\vec{C} = \langle C_\alpha \mid \alpha < \lambda \rangle$. Let $\mc{S}$ be the set of $\sigma \in {^{<\lambda}}2$ such 
that $\supp(\sigma) \sub S$, and let $\mc{S}^+$ be the set of 
$g \in {^{\lambda}}2$ such that $\supp(g) \sub S$. For all $\sigma \in \mc{S}$, we will construct a coherent (and trivial) $(n+1)$-family $\Phi^\sigma = \langle \varphi^\sigma_b \colon \bigcap_{\alpha \in b} I_{f_{\alpha}} \to H \mid b \in [\lh(\sigma)]^{n+1} \rangle$. We will arrange so that, if $\sigma, \tau \in \mc{S}$ and $\sigma 
\sqsubseteq \tau$, then $\Phi^\sigma \sqsubseteq \Phi^\tau$. At the end of the proof, 
we will find $g \in \mc{S}$ such that $\Phi^g = \bigcup_{\alpha < \lambda} \Phi^{g 
\restriction \alpha}$ is nontrivial.
The construction is by recursion on $\lh(\sigma)$, and we will use $\vec{C}$ to ensure coherence by maintaining the following recursion hypothesis for each $\sigma \in \mc{S}$:

\[ (\dagger) \,\,\,\,\,\,\,\,\, \forall\gamma \in 
\mathrm{Lim}(\lh(\sigma)) \setminus S ~ \forall\beta \in \mathrm{Lim}(C_\gamma) ~ 
\forall a \in [\beta]^n \left[\varphi^\sigma_{a \cup \{\beta\}} 
=^* \varphi^\sigma_{a \cup \{\gamma\}} \right]  \]

We now turn to the construction. First, if $\sigma \in \mc{S}$,
$\lh(s) = \delta \in \Lim(\lambda)$, and $\Phi^{\sigma \restriction \alpha}$ 
has been constructed for all $\alpha < \delta$, then let $\Phi^\sigma = \bigcup_{\alpha < \delta} \Phi^{\sigma \restriction \alpha}$.

It remains to describe the successor step of the construction. Thus, suppose 
that $\sigma \in \mc{S}$, $\lh(\sigma) = \delta$, and we have constructed $\Phi^\sigma$. 
We describe how to construct $\Phi^{\sigma, \ell}$ for $\ell < 2$ 
if $\delta \in S$ and $\ell = 0$ otherwise.
We distinguish between a number of cases.

\textbf{Case 1: $\delta$ is a successor ordinal}. In this case there is nothing new to check with respect to $(\dagger)$, so any coherent extension $\Phi^{\sigma,0}$ of $\Phi^\sigma$ will do. Such an extension exists by Theorem \ref{prop: goblot_var} and Remark 
\ref{remark: coherence}.

\textbf{Case 2: $\delta \in \Lim(\lambda) \setminus S$ and $\delta > \sup(\Lim(C_\delta))$}. Let $\gamma = \sup(\Lim(C_\delta))$. In the present case, $\gamma = \max(\Lim(C_\delta))$ and $\otp(C_\delta \setminus \gamma) = \omega$, thus witnessing that $\cf(\delta) = \omega$. Hence again by Theorem \ref{prop: goblot_var} there exists $\Psi^\sigma = \langle \psi^\sigma_a \mid a \in [\delta]^n \rangle$ that trivializes $\Phi^\sigma$. 

In order to construct $\Phi^{\sigma, 0} $, we need to define 
$\varphi^{\sigma,0}_{a \cup \{\delta\}}$ for all $a \in [\delta]^n$. 
Before giving the details, let us explain the basic idea. In order to maintain 
$(\dagger)$, we attempt to make $\varphi^{\sigma,0}_{a \cup \{\delta\}}$ resemble 
$\varphi^\sigma_{a \cup \{\gamma\}}$ as much as possible while maintaining coherence.
This will entail setting $\varphi^{\sigma,0}_{a \cup \{\delta\}} \restriction I_{f_\gamma}$ 
equal to $\varphi^\sigma_{a \cup \{\gamma\}} \restriction I_{f_\delta}$ multiplied by an 
appropriate power of $-1$ (to maintain coherence). We are then left with defining 
$\varphi^{\sigma,0}_{a \cup \{\delta\}}$ outside of $I_{f_\gamma}$. Here we can extend in an 
arbitrary way (while maintaining coherence); we will use the trivialization $\Psi^\sigma$ 
to do this.

Now fix $a \in [\delta]^n$. If $\gamma \in a$, then set 
$\varphi^{\sigma,0}_{a \cup \{\delta\}} = 0$.
If $\gamma \notin a$, then let $h = |a \setminus \gamma|$, and define 
$\varphi^{s,0}_{a \cup \{\delta\}} \colon \bigcap_{\alpha \in a} I_{f_{\alpha}} \cap I_{f_\delta} 
\to H$ by setting

\[
	\varphi^{s,0}_{a \cup \{\delta\}}(x) = \begin{cases}
	  (-1)^h \varphi^\sigma_{a \cup \{\gamma\}}(x) & \text{if } x \in I_{f_\gamma}; \\
	  (-1)^n \psi^\sigma_a(x) & \text{if } x \notin I_{f_\gamma}.
	\end{cases}
\]

Let us verify that this choice of $\Phi^{\sigma,0}$ satisfies the requirements of the 
construction. For readability, we will omit the superscripts $\sigma$ and $0$ during 
this verification.
To verify condition $(\dagger)$, we must show that, for all $\beta \in \Lim(C_\delta)$ 
and all $a \in [\beta]^n$, we have $\varphi_{a \cup \{\beta\}} =^* 
\varphi_{a \cup \{\delta\}}$. Fix such $\beta$ and $a$. By the 
construction of $\Phi^{s,0}$, we know that $\varphi_{a \cup \{\gamma\}} 
=^* \varphi_{a \cup \{\delta\}}$. By the properties of $\vec{C}$, we know 
that either $\beta = \gamma$ or $\beta \in \Lim(C_\gamma)$. In the latter case, the 
fact that $\Phi^\sigma$ satisfies $(\dagger)$ implies that 
$\varphi_{a \cup \{\beta\}} =^* \varphi_{a \cup \{\gamma\}}$. Since 
$I_{f_\beta} \sub^* I_{f_\gamma} \sub^* I_{f_\delta}$, this in turn implies 
that $\varphi_{a \cup \{\beta\}} =^* 
\varphi_{a \cup \{\delta\}}$, as desired.

We next verify coherence. It suffices to show that, for all $b \in [\delta]^{n+1}$, 
we have $(-1)^{n+1} \varphi_b + \sum_{i < n+1} (-1)^i \varphi_{b^i \cup \{\delta\}} 
=^* 0$. Fix such a $b$. We distinguish between two cases. First suppose that 
$\gamma \in b$ (say $\gamma = b(k)$, so $|b \setminus \gamma| = n - k$). Then
\begin{align*}
  (-1)^{n+1} \varphi_b + \sum_{i < n+1} (-1)^i \varphi_{b^i \cup \{\delta\}} & 
  = (-1)^{n+1} \varphi_b + (-1)^k\varphi_{b^k \cup \{\delta\}} \\ 
  &= (-1)^{n+1} \varphi_b + (-1)^k (-1)^{n-k} \varphi_{b^k \cup \{\gamma\}} \\ 
  &= (-1)^{n+1} \varphi_b + (-1)^n \varphi_b = 0.
\end{align*}

Suppose on the other hand that $\gamma \notin b$, and let $k = |b \cap \gamma|$ 
(so $|b \setminus \gamma| = n+1-k$). Then, when restricting to $I_{f_\gamma}$, 
we have
\begin{align*}
  & (-1)^{n+1} \varphi_b + \sum_{i < n+1} (-1)^i \varphi_{b^i \cup \{\delta\}} \\
  & = (-1)^{n+1} \varphi_b + \sum_{i < k} (-1)^i(-1)^{n+1-k} \varphi_{b^i \cup \{\gamma\}} 
   + \sum_{i = k}^n (-1)^i(-1)^{n-k} \varphi_{b^i \cup \{\gamma\}} \\ 
  & = (-1)^{n+1-k} \Big( (-1)^k \varphi_b + \sum_{i<k}(-1)^i \varphi_{b^i \cup \{\gamma\}} 
   + \sum_{i=k}^n (-1)^{i+1} \varphi_{b^i \cup \{\gamma\}}\Big) \\ 
  & = (-1)^{n+1-k} \sum_{i < n+2} (-1)^i \varphi_{(b \cup \{\gamma\})^i} =^* 0.
\end{align*}
Outside of $I_{f_\gamma}$, we have 
\begin{align*}
  (-1)^{n+1} \varphi_b + \sum_{i < n+1} (-1)^i \varphi_{b^i \cup \{\delta\}} & = 
  (-1)^{n+1} \varphi_b + \sum_{i < n+1} (-1)^i(-1)^n \psi_{b^i} \\ 
  & =^* (-1)^{n+1} \varphi_b + (-1)^n \varphi_b = 0.
\end{align*}
Thus, we have verified coherence.

\textbf{Case 3:
$ \delta \in \Lim(\lambda) \setminus S $ and $\sup(\Lim(C_\delta)) = \delta$.}  For $a \in [\delta]^n $, let $\gamma(a) = \min(\Lim(C_\delta) \setminus (\max(a) + 1))$. Then, 
for all $x \in I_{f_\delta} \cap \bigcap_{\alpha \in a} I_{f_\alpha}$, set
\[
  \varphi^{\sigma,0}_{a \cup \{\delta\}} = \begin{cases}
    \varphi^\sigma_{a \cup \{\gamma(a)\}}(x) & \text{if } x \in I_{f_{\gamma(a)}} \\ 
    0 & \text{otherwise}.
  \end{cases}
\]
Note that the ``otherwise" case of the above definition only occurs for finitely many 
$x \in \dom(\varphi^{\sigma,0}_{a \cup \{\delta\}})$.

We verify condition $(\dagger)$, again omitting superscripts for readability. For all  $\beta \in \Lim(C_\delta)$ and $a \in [\beta]^n  $ we have $\varphi_{a \cup \{\delta\}} =^\ast \varphi_{a \cup \{\gamma(a)\}} =^\ast \varphi_{a \cup \{\beta\}}$, where the second equality holds by the induction assumption since either $\gamma(a) = \beta$ or $\gamma(a) \in \Lim(C_\beta) = \Lim(C_\delta) \cap \beta $. Since moreover $I_{f_{\gamma(a)}} 
\sub^* I_{f_\beta} \sub^* I_{f_\delta}$, we have $\varphi_{a \cup \{\delta\}} =^* 
\varphi_{a \cup \{\beta\}}$, as desired.

We now verify coherence. Let $b \in [\delta]^{n+1}$, and let $\gamma = 
\gamma(b^0)$. Then
\begin{align*}
  (-1)^{n+1} \varphi_b + \sum_{i < n+1} (-1)^i \varphi_{b^i \cup \{\delta\}} & =^* 
  (-1)^{n+1} \varphi_b + \sum_{i < n+1} (-1)^i \varphi_{b^i \cup \{\gamma\}} \\ 
  & =^* \sum_{i < n+2} (-1)^i \varphi_{(b \cup \{\gamma\})^i} =^* 0,
\end{align*}
where the first equality (mod finite) uses the fact that $\gamma(b^i) = \gamma$ for 
all $i < n$ and $\varphi_{b^n \cup \{\gamma\}} =^* \varphi_{b^n \cup \{\gamma(b^n)\}}$ 
by $(\dagger)$, and the last equality holds because $\Phi^\sigma$ satisfies coherence.

\textbf{Case 4:} $\delta \in S$. In this case there is nothing to check with respect to $(\dagger)$, so we only need to produce two coherent families $\Phi^{\sigma, 0}$ and $\Phi^{\sigma,1}$. First observe that $\sup(\Lim(C_\delta)) = \delta$, since 
$\delta \in S \sub S^\lambda_\kappa$. We can thus define $\Phi^{\sigma,0}$ exactly 
as in Case 3. 

By the hypothesis of the theorem, every $\kappa$-chain in $({^\omega}\omega, \leq^*)$ 
carries a nontrivial coherent $H$-valued $n$-family. By Proposition \ref{prop: extension} 
and Remark \ref{remark: extension}, the same is true for every chain whose length 
has cofinality $\kappa$. In particular, we can fix a nontrivial coherent $H$-valued 
$n$-family 
\[
  T^\delta = \langle \tau^\delta_a \colon \bigcap_{\alpha \in a} I_{f_\alpha} \ra H 
  \mid a \in [\delta]^n \rangle.
\]
Note that, since $I_{f_\alpha} \subseteq^* I_{f_\delta}$ for all $\alpha < \delta$, 
the restriction $T^\delta \restriction \restriction I_{f_\delta}$ remains nontrivial.
Then define $\Phi^{\sigma,1}$ by setting $\varphi^{\sigma,1}_{a \cup \{\delta\}} 
= \varphi^{\sigma,0}_{a \cup \{\delta\}} + \tau^\delta_a$ for all $a \in [\delta]^n$.
The coherence of $\Phi^{\sigma,1}$ then follows from the coherence of 
$\Phi^{\sigma,0}$ and $T^\delta$.

This completes the construction. We now prove that there is $g \in \mc{S}^+$ 
such that $\Phi^g = \bigcup_{\beta < \lambda} \Phi^{g \restriction \beta}$ is 
nontrivial. We first record a couple of simple claims.

\begin{claim}
\label{claim_twisting}
Suppose that $\delta \in S$ and $\sigma \in {^\delta}2$. Then no trivialization of 
$\Phi^\sigma$ extends both to a trivialization of $\Phi^{\sigma,0}$ and to
a trivialization of $\Phi^{\sigma,1}$.
\end{claim}

\begin{proof}
This follows from the construction of $\Phi^{\sigma,0}$ and $\Phi^{\sigma,1}$ and 
from Propositions \ref{nonequiv_prop} and \ref{nonextension_prop}.
\end{proof}

\begin{claim}
\label{claim_atmostone}

Let $\Upsilon$ be a $n$-family over $\vec{f} \restriction \beta$ for some $\beta < \lambda$. Then there exists at most one $\sigma \in {}^\beta 2$ such that $\Upsilon$ trivializes $\Phi^\sigma$. 
\end{claim}

\begin{proof}
This is proven in exactly the same way as Claim \ref{claim: unique_triv}.
\end{proof}

Now we use a coding similar to that used in the proof of Theorem \ref{thm: ascending}. More precisely, let $\mathcal{P}^\gamma$ be the set of alternating $H$-valued $n$-families indexed over $\vec{f} \restriction \gamma$. Then let $G$ be a function with domain ${}^{\leq \lambda} 2$ such that for every $\gamma \in S^\lambda_\kappa \cup \lbrace \lambda \rbrace$ the map $G \restriction {}^\gamma 2$ is a surjection onto $\mathcal{P}^\gamma$, and such 
that if $\rho \sqsubseteq \rho' \in \dom(G)$, then $G(\rho) \sqsubseteq 
G(\rho')$. Such a $G$ exists because the intervals between successive elements of $S^\lambda_\kappa$ have size $\kappa$, and $\vert H \vert \leq 2^\kappa$. We also write $\Upsilon^\rho$ for $G(\rho)$. 

Finally, we define $F\colon {}^{< \lambda} 2 \to 2$ as follows. If $\lh(\rho) \in S$ and $\Upsilon^\rho$ trivializes $\Phi^\sigma$ for some $\sigma$ with $\lh(\sigma) = \lh(\rho)$ (such a $\sigma$ is unique by Claim \ref{claim_atmostone}) then we let $F(\rho) = \ell \in \lbrace 0 , 1 \rbrace$ be such that $\Upsilon^\rho$ does not extend to a trivialization of $\Phi^{\sigma, \ell}$ (such an $\ell$ exists by Claim \ref{claim_twisting}). Otherwise, let $F(\rho) \in \lbrace 0, 1 \rbrace$ be arbitrary.

By $\wl(S)$ there exists $g\colon \lambda \to 2$ such that for all $b\colon \lambda \to 2$ there exists $\alpha \in S$ such that $g(\alpha) = F(b \restriction \alpha)$. Since this only 
depends on the values that $g$ takes on $S$, we can assume that $g \in \mc{S}^+$.
Now suppose for the sake of contradiction that some $\Upsilon$ trivializes $\Phi^g = \bigcup_{\alpha < \lambda} \Phi^{g \restriction \alpha}$. Find $b$ such that 
$\Upsilon = \Upsilon^b$, and find $\alpha \in S$ such that $g(\alpha) = F(b \restriction 
\alpha)$. Then $\Upsilon^b \restriction (\vec{f} \restriction \alpha) = 
\Upsilon^{b \restriction \alpha}$ trivializes $\Phi^g \restriction (\vec{f} \restriction 
\alpha) = \Phi^{g \restriction \alpha}$. By our choice of $F$, $g$, and $\alpha$, 
then, no extension of $\Upsilon^{b \restriction \alpha}$ can trivialize 
$\Phi^{g \restriction (\alpha + 1)}$ and hence, \emph{a fortiori}, no extension of 
$\Upsilon^{b \restriction \alpha}$ can trivialize $\Phi^g$, contradicting the 
fact that $\Upsilon$ extends $\Upsilon^{b \restriction \alpha}$ and trivializes 
$\Phi^g$ and completing the proof of the theorem.
\end{proof}

We can now show the consistency of the simultaneous nonvanishing of $\lim^k \mb{A}$ 
for many values of $k$. 
We first show that for all $n$ it is consistent that $	 \Et_{2 \leq k \leq n} \lim^k \textbf{A} \neq 0$. This can be realized in the kind of model considered in \cite{velickovic2021non}. Recall that, for a regular cardinal $\kappa$ and a set 
$I$, the poset $\mathrm{Fn}(I,2,\kappa)$ consists of all partial functions 
$p \colon I \ra 2$ such that $|p| < \kappa$, ordered by reverse inclusion. Recall also that 
Hechler forcing $\bb{H}$ consists of conditions of the form $p = (s_p, f_p)$ such 
that $s_p \in {^{<\omega}}\omega$ and $f_p \in 
{^\omega}\omega$. If $p,q \in \bb{H}$, then $q \leq p$ if and only if 
$s_q \supseteq s_p$, $f_q \geq f_p$ and, for all $n \in \dom(s_q) \setminus \dom(s_p)$, we 
have $s_q(n) \geq f_p(n)$.

\begin{lemma}
\label{lemma_squaresomegan}
Suppose that $1 \leq n < \omega$. Then there exists a model of $\ZFC$ in which 
\begin{enumerate}
  \item $\mathfrak{b} = \mathfrak{d} = \omega_n$;
  \item for all $k < n$, $\wl(S^{k+1}_k)$ holds;
  \item for all $k < n$, there exists a stationary set $S \subseteq S^n_k$ 
  such that $\square(\omega_n, S) + \wl(S)$ holds.
\end{enumerate}
\end{lemma}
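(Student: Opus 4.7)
The plan is to force over $V = L$ with a product $\bb{P} = \bb{H} \times \bb{E}$, where $\bb{H}$ is the finite-support iteration of Hechler forcing of length $\omega_n$ and $\bb{E} = \prod_{k=1}^{n} \Add(\omega_k, \omega_{n+k})$ is the product of the indicated Cohen-style posets. The $\bb{H}$ factor realizes $\mathfrak{b} = \mathfrak{d} = \omega_n$, while $\bb{E}$ forces $2^{\omega_k} = \omega_{n+k}$ for each $0 \le k \le n$, producing the strict gaps $2^{\omega_k} < 2^{\omega_{k+1}}$ that deliver the weak diamonds via Shelah's generalization of the Devlin--Shelah theorem.

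In $V = L$, $\GCH$ and $\square_\kappa$ hold at every infinite cardinal, and by standard fine-structural constructions one may fix, for each $k < n$, a non-reflecting stationary set $S_k \sub S^n_k$ together with a $\square(\omega_n, S_k)$-sequence $\vec{C}^{(k)}$. I would then verify in $V[G]$, for $G$ generic over $\bb{P}$, that: (i) cardinals and cofinalities are preserved, since $\bb{H}$ is ccc and each factor of $\bb{E}$ is $(<\omega_k)$-closed and $\omega_{k+1}$-cc under $\GCH$; (ii) $\mathfrak{b} = \mathfrak{d} = \omega_n$, because $\bb{E}$ is $\sigma$-closed and adds no reals, reducing the question to the standard analysis of $\bb{H}$; (iii) $2^{\omega_0} = \omega_n$ and $2^{\omega_k} = \omega_{n+k}$ for $1 \le k \le n$, via the obvious lower bounds together with nice-name upper bounds computed using $\GCH$ in $V$; (iv) for each $k < n$, $\wl(S^{k+1}_k)$ holds because $2^{\omega_k} = \omega_{n+k} < \omega_{n+k+1} = 2^{\omega_{k+1}}$; and (v) each $\vec{C}^{(k)}$ remains a $\square(\omega_n, S_k)$-sequence in $V[G]$, since the defining clauses are absolute once cofinalities are preserved and $S_k$ remains stationary, while $\wl(S_k)$ follows from $2^{<\omega_n} = \omega_{2n-1} < \omega_{2n} = 2^{\omega_n}$.

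The main obstacle is showing that each $S_k$ remains stationary in $V[G]$, since $\bb{P}$ mixes factors with quite different closure and chain-condition profiles. I would handle this by factoring $\bb{P}$ at level $\omega_{k+1}$ as $\bb{P} = \bb{E}_{>k} \times (\bb{H} \times \bb{E}_{\le k})$, where $\bb{E}_{>k}$ is $(<\omega_{k+1})$-closed and $\bb{E}_{\le k}$ is $\omega_{k+1}$-cc in $V^{\bb{E}_{>k}}$ (using that $\bb{E}_{>k}$ adds no $<\omega_{k+1}$-sequences and hence preserves $\GCH$ below $\omega_{k+1}$). The closed tail preserves stationary subsets of $\omega_n$ of cofinality below $\omega_{k+1}$ by a standard strategic-closure argument, and then the $\omega_{k+1}$-cc part preserves stationary subsets of regular $\omega_n \ge \omega_{k+1}$ by the general chain-condition preservation result. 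Combining these two preservations yields that $S_k$ is still stationary in $V[G]$, which together with the remark following the definition of $\square(\omega_n, S)$ guarantees that $\vec{C}^{(k)}$ is a genuine $\square(\omega_n, S_k)$-sequence in the extension.
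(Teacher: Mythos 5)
Your proposal is correct and uses the same forcing as the paper: over $L$, the poset $\bb{H}_{\omega_n}\times\prod_{k=1}^n\mathrm{Fn}(\omega_{n+k},2,\omega_k)$ (your $\Add(\omega_k,\omega_{n+k})$ is exactly the paper's $\bb{C}_k$), so the resulting model is identical, and both pull Jensen's $\square(\omega_n,S)$-sequences (with nonreflecting $S\subseteq S^n_k$) up from $L$ by absoluteness once stationarity is preserved. Where you differ is in the verification. For stationarity of $S\subseteq S^n_k$, you factor the poset at level $\omega_{k+1}$ separately for each $k$, into a $(<\omega_{k+1})$-closed tail and an $\omega_{k+1}$-cc head; the paper factors only once, at level $\omega_n$, observing that $\bb{C}_n$ is $\omega_n$-closed (and therefore $\omega_{k+1}$-closed for every $k<n$ simultaneously) while $\bb{H}_{\omega_n}\times\bb{C}_1\times\cdots\times\bb{C}_{n-1}$ is $\omega_n$-cc of size $\omega_{2n-1}$, so a single decomposition handles all $k$ at once. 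Both work; the paper's is a bit more economical. For the weak diamonds, you appeal to the cardinal-arithmetic gaps $2^{<\mu}<2^\mu$ via Shelah's generalization of the Devlin--Shelah theorem, whereas the paper instead cites a forcing-specific preservation lemma (\cite[Lemma 4.3]{velickovic2021non}) for the $\square$-targeted sets $S$ and invokes \cite[Theorem 4.5]{velickovic2021non} wholesale for items (1)--(2). Your route should work, but you should be explicit that the version of Shelah's theorem you invoke yields $\wl(S)$ for an arbitrary \emph{restricted} stationary $S$ (not merely the unrestricted $\wl(\omega_n)$ or $\wl(S^{k+1}_k)$), as this restricted form is exactly what the paper gets from its forcing lemma and exactly what clause (3) requires.
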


\begin{proof}
Our model will be the following forcing extension of $L$:
\[
  V' = L^{\bb{H}_{\omega_n} \times \bb{C}_1 \times \cdots \times \bb{C}_n},
\]
where $\bb{H}_{\omega_n}$ is the finite support iteration of
Hechler forcings of length $\omega_n$ and $\bb{C}_k = \textrm{Fn}(\omega_{n+k}, 2, \omega_k)$ for all $1 \leq k \leq n$, i.e., $\bb{C}_k$ is the forcing to add $\omega_{n+k}$-many Cohen subsets to 
$\omega_k$. 

It follows from  \cite[Theorem 4.5]{velickovic2021non} that $V'$ has all of the same 
cardinals and cofinalities as $L$ and 
satisfies $\mathfrak{b}= \mathfrak{d}= \omega_n$ as well as $\wl(S^{k+1}_k)$ for all
$k < n$. To verify item (3) in the statement of the lemma, fix $k < n$, 
and temporarily work in $L$. By \cite{jensen}, we can find a stationary subset 
$S \subseteq S^n_k$ and a $\square(\omega_n, S)$-sequence $\vec{C}$. 
Let $\bb{P} = \bb{H}_{\omega_n} \times \bb{C}_1 \times \cdots \times \bb{C}_{n-1}$. 
Then $\bb{P}$ is an $\omega_n$-cc poset of size $\omega_{2n-1}$, $\bb{C}_n$ 
is $\omega_n$-closed, and $V' = L^{\bb{P} \times \bb{C}_n}$. In particular, 
$S$ remains stationary in $V'$. Since all of the defining properties of 
$\square(\omega_n, S)$ are upwards absolute from $L$ to $V'$, $\vec{C}$ remains a 
$\square(\omega_n, S)$-sequence in $V'$. Moreover, \cite[Lemma 4.3]{velickovic2021non}
implies that $\wl(S)$ holds in $V'$, so $S \sub S^n_k$ is as desired.
\end{proof}

\begin{lemma}
\label{lemma_nontrivialchain}
Fix $1 \leq n < \omega$. Assume that 
\begin{enumerate}
  \item for all $k < n$, $\wl(S^{k+1}_k)$ holds;
  \item for all $1 \leq k < n$, there exists a stationary set $S \subseteq S^n_k$ 
  such that $\square(\omega_n, S) + \wl(S)$ holds.
\end{enumerate}
Then every $\omega_n$-chain in $({}^\omega \omega, \leq^\ast)$ carries a nontrivial coherent $H$-valued $k$-family, for all $2 \leq k \leq n$ and all countable abelian groups $H$.    
\end{lemma}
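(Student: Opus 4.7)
The plan is to combine Theorem \ref{thm: ascending} with the step-up Lemma \ref{lemma_stepup}, after first verifying that any $\omega_m$-chain in $({}^\omega\omega, \leq^*)$ yields an ascending sequence in the sense of Definition \ref{ascending_def}. Concretely, given a chain $\vec f = \langle f_\alpha \mid \alpha < \omega_m \rangle$ with $1 \leq m \leq n$, I will set $\vec x := \langle I_{f_\alpha} \mid \alpha < \omega_m \rangle$ and $C := \omega_m$, and observe that $\vec x$ respects $w \in \Fn^+(\omega_m)$ precisely when every element of $w^{-1}\{0\}$ lies below $\min(w^{-1}\{1\})$; for such $w$, the set $d^{\vec x}_w$ is cofinite in the infinite set $I_{f_{\min(w^{-1}\{1\})}} \setminus \bigcup_{\alpha \in w^{-1}\{0\}} I_{f_\alpha}$. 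Choosing any $e_w \in [d^{\vec x}_w]^{\aleph_0}$ for each such $w$ (and $e_w := \emptyset$ otherwise), clauses (a)--(c) of Definition \ref{ascending_def} will follow from the strict $\subseteq^*$-nesting $I_{f_\alpha} \subsetneq^* I_{f_\beta}$ for $\alpha < \beta < \omega_m$.

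Theorem \ref{thm: ascending} will then apply: its hypothesis $\wkd(S^{j+1}_j)$ for $j < m$ is covered by assumption (1) of the present lemma, and any countable $H$ trivially meets the size bound $|H| \leq 2^{\omega_1}$. This yields, for every $1 \leq m \leq n$ and every countable abelian group $H$, that every $\omega_m$-chain in $({}^\omega\omega, \leq^*)$ carries a nontrivial coherent $H$-valued $m$-family.

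To finish, for each $2 \leq k \leq n$ and each countable $H$, I will invoke Lemma \ref{lemma_stepup} with $\kappa := \omega_{k-1}$, $\lambda := \omega_n$, and with the integer ``$n$'' in that lemma's statement playing the role of $k-1$. Its hypothesis (1) is precisely the previous paragraph's conclusion at $m = k-1$, and its hypothesis (2) is our assumption (2) at the index $k - 1 \in \{1, \ldots, n-1\}$; the size bound $|H| \leq \aleph_0 \leq 2^{\omega_{k-1}}$ holds trivially. The step-up conclusion then delivers a nontrivial coherent $H$-valued $k$-family on every $\omega_n$-chain, as desired. The only part of the plan requiring any actual work is the ascending-sequence verification in the first paragraph; everything else is bookkeeping.
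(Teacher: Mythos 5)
Your proposal is correct and reaches the same endpoint as the paper's, but via a noticeably more self-contained route. The paper establishes that every $\omega_m$-chain carries a nontrivial coherent $H$-valued $m$-family (for $1 \leq m \leq n$) by citing \cite[Proposition 3.4]{velickovic2021non} for $m=1$ and then iterating \cite[Lemma 3.6]{velickovic2021non}. You instead derive this directly from the paper's own Theorem~\ref{thm: ascending} by showing that the sequence $\langle I_{f_\alpha} \mid \alpha < \omega_m \rangle$ attached to any $\omega_m$-chain is ascending in the sense of Definition~\ref{ascending_def}. This makes explicit something the paper only states informally in the introduction (that ``ascending'' weakens ``$\subseteq^*$-increasing'') and keeps the argument internal to the paper. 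After that, both you and the paper conclude identically by applying Lemma~\ref{lemma_stepup} with $\kappa = \omega_{k-1}$, $\lambda = \omega_n$.

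One small point to tighten: your biconditional ``$\vec x$ respects $w$ iff every element of $w^{-1}\{0\}$ lies below $\min(w^{-1}\{1\})$'' can fail in the degenerate case $w = \{(0,1)\}$ when $I_{f_0}$ happens to be finite (an $\omega_m$-chain is only $<^*$-increasing, so $f_0$ could be identically $0$, say, though $f_\alpha$ for $\alpha \geq 1$ is necessarily eventually positive). In that case your recipe for $e_w$ would try to select an infinite subset of a finite set. Relatedly, taking $C = \omega_m$ makes clause~(b) of Definition~\ref{ascending_def} fail at $\beta = 0$. Both issues disappear if you pass to a tail of the chain (or take $C = \omega_m \setminus \{0\}$) and invoke Proposition~\ref{prop: extension} / Remark~\ref{remark: extension} to get back to the full chain; worth a sentence. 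The rest of the verification of (a)--(c) from the $\subseteq^*$-nesting is sound, and the subsequent bookkeeping with Lemma~\ref{lemma_stepup} is exactly right, including the index shift ($n$ there playing the role of $k-1$ here) and the cardinality bound $|H| \leq \aleph_0 \leq 2^{\omega_{k-1}}$.
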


\begin{proof}
Fix a countable abelian group $H$.
In $\ZFC$, every $\omega_1$-chain in $({}^\omega \omega, \leq^\ast)$ carries a nontrivial coherent $H$-valued $1$-family (cf.\ \cite[Proposition 3.4]{velickovic2021non}). Using \cite[Lemma 3.6]{velickovic2021non}, one can prove by induction that, for 
every $1 \leq k \leq n$, every $\omega_k$-chain carries a nontrivial coherent 
$H$-valued $k$-family. Finally, suppose that $2 \leq k \leq n$, and pick
$S \subseteq S^{\omega_n}_{\omega_{k-1}}$ such that $\square(\omega_n, S) + 
\wl(S)$ holds. Then Lemma \ref{lemma_stepup} implies that every $\omega_n$-chain 
carries a nontrivial coherent $H$-valued $k$-family, as desired.
\end{proof}

The following theorem is now immediate, yielding clause (1) of Theorem B from the introduction.

\begin{theorem} \label{thm: n_simul}
Fix $2 \leq n < \omega$.
Relative to the consistency of $\ZFC$, it is consistent that $\mathfrak{b} = \mathfrak{d} 
= \omega_n$ and
$\Et_{2 \leq k \leq n} \lim^k \textbf{A} \neq 0$ holds. 
\end{theorem}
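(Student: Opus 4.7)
The proof is essentially an assembly of the machinery developed in this section. The plan is to work in the model $V'$ produced by Lemma \ref{lemma_squaresomegan}. In $V'$ we have $\mathfrak{b} = \mathfrak{d} = \omega_n$, together with $\wl(S^{k+1}_k)$ for every $k<n$ and, for every $1 \leq k < n$, a stationary $S \subseteq S^n_k$ carrying both $\square(\omega_n, S)$ and $\wl(S)$. This is exactly the package of hypotheses needed in the next step.

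Next I would use $\mathfrak{b} = \mathfrak{d} = \omega_n$ to construct a scale, i.e., a sequence $\vec{f} = \langle f_\alpha \mid \alpha < \omega_n \rangle$ in $({}^{\omega}\omega, \leq^*)$ that is strictly $<^*$-increasing and $\leq^*$-cofinal. The existence of such a scale under $\mathfrak{b} = \mathfrak{d}$ is standard: one enumerates a cofinal family and recursively replaces each $f_\alpha$ by a pointwise maximum with previously constructed terms, using $\mathfrak{b} = \omega_n$ to ensure that such maxima exist at limit stages of cofinality $<\omega_n$. The induced family $\mathcal{J} = \{I_{f_\alpha} \mid \alpha < \omega_n\}$ is then $\subseteq^*$-cofinal in $\{I_f \mid f \in {}^{\omega}\omega\}$.

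Now fix $2 \leq k \leq n$. By Lemma \ref{lemma_nontrivialchain}, applied with $H = \mathbb{Z}$ (which is a countable abelian group), the $\omega_n$-chain $\vec{f}$ carries a nontrivial coherent $\mathbb{Z}$-valued $k$-family
\[
  \Phi^{(k)} = \left\langle \varphi_b : \bigcap_{\alpha \in b} I_{f_\alpha} \to \bb{Z} ~ \middle\vert ~ b \in [\omega_n]^k \right\rangle.
\]
Viewing $\Phi^{(k)}$ as a $k$-family indexed along $\mathcal{J}$, its nontriviality together with Fact \ref{fact: lim_coh} yields $\lim^k \mb{A}_{\mathcal{J}} \neq 0$, and then Fact \ref{fact: cofinal_lim}, via $\subseteq^*$-cofinality of $\mathcal{J}$ in $\{I_f \mid f \in {}^{\omega}\omega\}$, gives $\lim^k \mb{A} \neq 0$ in $V'$. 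Running this argument for every $k$ with $2 \leq k \leq n$ simultaneously in the single model $V'$ produces $\bigwedge_{2 \leq k \leq n} \lim^k \mb{A} \neq 0$ alongside $\mathfrak{b} = \mathfrak{d} = \omega_n$, as required. There is no real obstacle here beyond invoking the preceding two lemmas; all the genuine work has already been done in the stepping-up construction of Lemma \ref{lemma_stepup} and the forcing construction of Lemma \ref{lemma_squaresomegan}.
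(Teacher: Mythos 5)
Your proposal matches the paper's own proof almost exactly: work in the model of Lemma \ref{lemma_squaresomegan}, extract a $\subseteq^*$-cofinal $\omega_n$-chain from $\mathfrak{b}=\mathfrak{d}=\omega_n$, apply Lemma \ref{lemma_nontrivialchain} with $H=\mathbb{Z}$ to get nontrivial coherent $k$-families for $2\leq k\leq n$, and then transfer the nontriviality to $\lim^k \mb{A}\neq 0$ via the cofinality machinery. The only cosmetic difference is that you cite Fact \ref{fact: cofinal_lim} where the paper cites Proposition \ref{prop: cofinal_subset} and Remark \ref{remark: extension} directly; these say the same thing.
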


\begin{proof}
  In any model witnessing the conclusion of Lemma \ref{lemma_squaresomegan}, 
  there exists an $\omega_n$-chain that is cofinal in $({^{\omega}}\omega, \leq^*)$. 
  Lemma \ref{lemma_nontrivialchain} then implies that, for all $2 \leq k \leq n$, 
  this $\omega_n$-chain carries a nontrivial coherent $\bb{Z}$-valued $k$-family.
  Then Proposition \ref{prop: cofinal_subset}, Remark \ref{remark: extension}, 
  and Fact \ref{fact: lim_coh} imply that $\lim^k \mathbf{A} \neq 0$ for all $2 \leq k \leq n$.
\end{proof}

We now show that it is consistent that $\lim^n \mathbf{A}$ does not vanish for any $n \geq 2$.
By Goblot's Theorem (cf.\ Proposition \ref{prop: goblot_var}), this requires the dominating number 
to be at least $\aleph_{\omega+1}$. For technical reasons, we were unable to obtain this simultaneous 
nonvanishing with $\mathfrak{d} = \aleph_{\omega+1}$, but we were able to achieve it with
$\mathfrak{d} = \aleph_{\omega + 2}$.

\begin{lemma}
\label{lemma:long-chain-consistency}
There exists a model of $\ZFC$ in which 
\begin{enumerate}
  \item $\mathfrak{b} = \mathfrak{d} = \omega_{\omega+2}$;
  \item for all $k < \omega$, $\wl(S^{k+1}_k)$ holds;
  \item for all $k < \omega$, there exists a stationary set $S \subseteq 
  S^{\omega_{\omega+2}}_{\omega_k}$ such that $\square(\omega_{\omega+2}, S) + \wl(S)$ holds.
\end{enumerate}
\end{lemma}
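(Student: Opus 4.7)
The plan is to build an analogue at $\lambda := \omega_{\omega+2}$ of the forcing construction of Lemma \ref{lemma_squaresomegan}. I would work in $L$, where $\GCH$ holds, and appeal to \cite{jensen} to fix, for each $k < \omega$, a stationary set $S_k \sub S^\lambda_{\omega_k}$ together with a $\square(\lambda, S_k)$-sequence $\vec{C}^k$. The forcing would be $\bb{P} := \bb{H}_\lambda \times \bb{C}$, where $\bb{H}_\lambda$ is the finite-support iteration of Hechler forcing of length $\lambda$, and $\bb{C}$ is an Easton-supported product of Cohen forcings $\bb{C}_k := \Fn(\lambda, 2, \omega_k)$ indexed by the regular cardinals $\omega_k \in [\omega_1, \omega_{\omega+1}]$. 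Set $V' := L^{\bb{P}}$.

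Items (1) and (2) should follow in essentially the same fashion as in Lemma \ref{lemma_squaresomegan}: $\bb{H}_\lambda$ is ccc and forces a $\leq^\ast$-increasing $\lambda$-chain that is cofinal in $({}^\omega \omega, \leq^\ast)$; each $\bb{C}_k$ is $\omega_k$-closed and, under $\GCH$ in $L$, has the $\omega_{k+1}$-cc, so $\bb{P}$ preserves cofinalities and yields $\mathfrak{b} = \mathfrak{d} = \lambda$. For each $k < \omega$, splitting $\bb{P}$ into the head collecting $\bb{H}_\lambda$ and the factors $\bb{C}_j$ for $j \leq k$ (which is $\omega_{k+1}$-cc and of size $<\lambda$) and the $\omega_{k+1}$-closed tail puts us in the hypotheses of \cite[Lemma 4.3]{velickovic2021non} and delivers $\wl(S^{k+1}_k)$.

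For item (3), fix $k < \omega$, and factor $\bb{P} = \bb{Q} \times \bb{C}_{\omega+1}$, where $\bb{Q}$ comprises everything except the last Cohen piece. Then $\bb{Q}$ is $\omega_{\omega+1}$-cc in $L$, so it preserves stationary subsets of $\lambda$ and in particular of $S^\lambda_{\omega_k}$; meanwhile $\bb{C}_{\omega+1}$ is $\omega_{\omega+1}$-closed and therefore preserves stationarity of any fixed subset of $S^\lambda_{<\omega_{\omega+1}}$. Hence $S_k$ remains stationary in $V'$. The defining clauses (1)--(3) of a $\square(\lambda, S_k)$-sequence are upward absolute from $L$ to $V'$; combining this with the preserved stationarity of $S_k$ and the Remark after Definition of $\square(\lambda, S)$, which rules out threads automatically once $S_k$ is stationary, shows that $\vec{C}^k$ remains a $\square(\lambda, S_k)$-sequence in $V'$. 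Finally, \cite[Lemma 4.3]{velickovic2021non} applied to the factorisation $\bb{P} = \bb{Q} \times \bb{C}_{\omega+1}$ yields $\wl(S_k)$ in $V'$.

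The main obstacle will be simultaneously preserving stationarity of $S_k$ and non-threading of $\vec{C}^k$ through a forcing that adds $\lambda$ reals and many Cohen subsets at every regular cardinal below $\lambda$. This is precisely where the choice $\lambda = \omega_{\omega+2}$ rather than $\omega_{\omega+1}$ becomes essential: the double-successor structure lets us peel off an $\omega_{\omega+1}$-closed tail, absorbing the top Cohen factor while keeping the head $\omega_{\omega+1}$-cc, whereas at $\omega_{\omega+1}$ the predecessor $\omega_\omega$ is singular and no such clean decomposition is available. If the preservation step can be pushed down to $\omega_{\omega+1}$, the same argument would improve the conclusion accordingly and settle the open question noted just after Theorem B.
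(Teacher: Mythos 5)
Your construction has two genuine gaps, both stemming from the way the Cohen factors are sized and indexed, and both are fatal to clauses (2) and (3) respectively.

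First, you take $\bb{C}_k := \Fn(\lambda, 2, \omega_k)$ for every $k$, so in $V'$ you have $2^{\omega_k} = \lambda$ for all $1 \leq k \leq \omega+1$. But $\wl(S)$ for any stationary $S \subseteq \omega_{k+1}$ implies $2^{\omega_k} < 2^{\omega_{k+1}}$ (this is the easy direction of the Devlin--Shelah-style equivalence underlying $\wl$), so your model cannot satisfy clause (2). The paper's construction instead sets $\bb{C}_n = \Fn(\omega_{\omega+n+2}, 2, \omega_n)$, so that $2^{\omega_n} = \omega_{\omega+n+2}$ is strictly increasing in $n$; this is the cardinal arithmetic that \cite[Lemma~4.3]{velickovic2021non} packages. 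Relatedly, your claim that $\bb{H}_\lambda \times \bb{C}_1 \times \cdots \times \bb{C}_k$ has size $<\lambda$ is false: each $\bb{C}_j$ already has size $\lambda$, and the hypotheses of \cite[Lemma~4.3]{velickovic2021non} require the tail Cohen piece to add $|\bb{P}|^+$-many subsets, which your uniformly-sized factors do not supply.

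Second, and more fundamentally, your product has no Cohen factor at $\omega_{\omega+2}$ at all: the top factor $\bb{C}_{\omega+1} = \Fn(\lambda, 2, \omega_{\omega+1})$ adds subsets of $\omega_{\omega+1}$. But $\wl(S)$ for stationary $S \subseteq S^{\omega_{\omega+2}}_{\omega_k}$ is a weak diamond at $\omega_{\omega+2}$, and every invocation of \cite[Lemma~4.3]{velickovic2021non} in the paper gets $\wl$ at a cardinal $\kappa$ precisely because the closed factor is a Cohen poset $\Fn(\cdot, 2, \kappa)$ adding subsets of $\kappa$ itself. The paper therefore includes the additional factor $\bb{C}_\omega = \Fn(\omega_{\omega\cdot 2 + 2}, 2, \omega_{\omega+2})$, which is $\omega_{\omega+2}$-closed and adds $\omega_{\omega\cdot 2+2} = |\bb{H}\times\prod_n\bb{C}_n|^+$-many Cohen subsets of $\omega_{\omega+2}$; this is exactly what makes clause (3) go through. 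Your factorisation $\bb{P} = \bb{Q} \times \bb{C}_{\omega+1}$ simply cannot produce weak diamond at $\omega_{\omega+2}$. Your closing speculation about the role of $\omega_{\omega+2}$ is also off-target for this reason, and in any case improving $\mathfrak{d}$ to $\aleph_{\omega+1}$ would not settle the question at the end of the paper, which asks for nonvanishing at $k=1$ as well.
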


\begin{proof}
Our model will be the following forcing extension of $L$:
\[  
	V' = L^{\mathbb{H}_{\omega_{\omega+2}} \times \prod_{1 \leq n < \omega} \mathbb{C}_n \times \bb{C}_\omega }         
\]
satisfies the properties in question,  where $\mathbb{H}_{\omega_{\omega+2}}$ is the finite-support iteration of Hechler forcings of length $\omega_{\omega+2}$, $\mathbb{C}_n= \mathrm{Fn}(\omega_{\omega + n + 2}, 2, \omega_n)$ for all $1 \leq n < \omega$, and $\bb{C}_\omega = \mathrm{Fn}(\omega_{\omega \cdot 2 + 2} , 2, \omega_{\omega+2})$.

We first verify clause (1) in the statement of the theorem. Let $G$ be a generic filter for 
$\mathbb{H}_{\omega_{\omega+2}} \times \prod_{1 \leq n < \omega} \mathbb{C}_n \times \bb{C}_\omega$. By standard 
arguments, forcing over $L$ with this product preserves all cardinals and cofinalities. 
We can write $G$ as the product $G= G_0 \times G'$ of generic filters on $\bb{H}_{\omega_{\omega+2}}$ and $\prod_{1 \leq n < \omega} \mathbb{C}_n \times \bb{C}_\omega $, respectively. Note that $\prod_{1 \leq n < \omega} \mathbb{C}_n \times \bb{C}_\omega$ is $\sigma$-closed, so that in particular it does not add any reals, preserves $\CH$ and $\bb{H}_{\omega_{\omega+2}}$ is absolute with respect to it. Therefore, forcing with 
$\bb{H}_{\omega_{\omega+2}}$ over $L[G']$ forces that $\mathfrak{b} = \mathfrak{d} = \omega_{\omega+2}$, as desired.

We now verify clause (2). Let us further decompose $G'$ as a product $\prod_{1 	\leq n < \omega} G_n 
\times G_\omega$, where $G_n$ is generic for $\bb{C}_n$ for all $1 \leq n \leq \omega$. Fix 
some $k < \omega$. Then $\prod_{k+2 \leq n < \omega} \bb{C}_n \times \bb{C}_\omega$ is $\omega_{k+2}$-closed, so that $W = L[\prod_{k+2 \leq n < \omega} G_n \times G_\omega]$ satisfies $2^{\omega_k} = \omega_{k+1}$. Now apply \cite[Lemma 4.3]{velickovic2021non} in $W$ with $\bb{P} = \bb{H}_{\omega_{\omega+2}} \times \prod_{j \leq k} \bb{C}_k$ and $\bb{Q} = \bb{C}_{k+1}$ to conclude that $\wl(S)$ holds for every stationary $S \subseteq \omega_{k+1}$ with $S \in W$; in particular, for $S = S^{k+1}_{k}$. More 
precisely, in $W$, $\bb{P}$ is $\omega_{k+1}$-cc and has size $\omega_{\omega + k + 2}$, 
$\bb{Q} = \mathrm{Fn}(\omega_{\omega + k + 3}, 2, \omega_{k + 1})$ and, in $W$, we have 
$(\omega_{\omega + k + 2})^{\omega_k} = \omega_{\omega + k + 2}$, so 
\cite[Lemma 4.3]{velickovic2021non} implies that $\wl(S^{k+1}_k)$ holds in the extension of $W$ 
by $\bb{P} \times \bb{Q}$, i.e., in $V'$.

We finally verify clause (3). Fix $k < \omega$, and temporarily work in $L$. By \cite{jensen}, we can 
find a stationary subset $S \subseteq S^{\omega_{\omega+2}}_{\omega_k}$ and a 
$\square(\omega_{\omega+2}, S)$-sequence $\vec{C}$. Since $\mathbb{H}_{\omega_{\omega + 2}} \times \prod_{1 \leq n < \omega} 
\mathbb{C}_n$ is $\omega_{\omega+2}$-cc and $\bb{C}_\omega$ is $\omega_{\omega+2}$-closed, $S$ remains 
stationary in $V'$, so $\vec{C}$ remains a $\square(\omega_{\omega+2}, S)$-sequence in $V'$.
Another application of \cite[Lemma 4.3]{velickovic2021non}, this time with 
$\bb{P} = \mathbb{H}_{\omega_{\omega+2}} \times \prod_{1 \leq n < \omega} \mathbb{C}_n$ and $\bb{Q} = 
\mathbb{C}_\omega$, implies that $\wl(S)$ holds in $V'$.
\end{proof}

\begin{lemma}
\label{lemma:long-chain-implication}
Assume that $\lambda \geq \omega_{\omega+1}$ is a regular cardinal and
\begin{enumerate}
  \item for all $1 \leq k < \omega$, $\wl(S^{k+1}_k)$ holds;
  \item for all $1 \leq k < \omega$, there is a stationary set $S \subseteq S^\lambda_{\omega_k}$ 
  such that $\square(\lambda, S) + \wl(S)$ holds.
\end{enumerate}
Then every $\lambda$-chain in $({^{\omega}}\omega, \leq^*)$ carries a nontrivial coherent 
$H$-valued $k$-family for all $2 \leq k < \omega$ and all countable abelian groups $H$.
\end{lemma}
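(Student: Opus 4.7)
The plan is to mirror the argument of Lemma \ref{lemma_nontrivialchain}, replacing the role of $\omega_n$ with the given long regular cardinal $\lambda$ and applying the stepping-up Lemma \ref{lemma_stepup} a single additional time at the top. Fix $2 \leq k < \omega$ and a countable abelian group $H$ throughout.

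First I would reprove the statement contained in the first two sentences of the proof of Lemma \ref{lemma_nontrivialchain}, which is entirely independent of $\lambda$: in $\ZFC$ every $\omega_1$-chain in $({}^\omega\omega, \leq^*)$ carries a nontrivial coherent $H$-valued $1$-family, and using \cite[Lemma 3.6]{velickovic2021non} together with hypothesis (1), one obtains by induction on $j$ that, for every $1 \leq j < \omega$, every $\omega_j$-chain carries a nontrivial coherent $H$-valued $j$-family. Applying this at $j = k-1$ (legitimate because $k \geq 2$) yields that every $\omega_{k-1}$-chain carries a nontrivial coherent $H$-valued $(k-1)$-family.

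Next I would invoke Lemma \ref{lemma_stepup} with parameters $n := k-1$, $\kappa := \omega_{k-1}$, and $\lambda$ as given, and group $H$. The four hypotheses to verify are all immediate: $\kappa = \omega_{k-1} < \omega_\omega \leq \omega_{\omega+1} \leq \lambda$, so $\kappa < \lambda$ are uncountable regular cardinals; $|H| \leq \aleph_0 \leq 2^{\omega_{k-1}} = 2^\kappa$ since $H$ is countable; the conclusion of the preceding paragraph is precisely the first hypothesis of \ref{lemma_stepup}; and hypothesis (2) of the present lemma, applied at index $k-1 \geq 1$, supplies the stationary $S \subseteq S^\lambda_{\omega_{k-1}}$ with $\square(\lambda, S) + \wl(S)$. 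The conclusion of Lemma \ref{lemma_stepup} is exactly what we want: every $\lambda$-chain in $({}^\omega\omega, \leq^*)$ carries a nontrivial coherent $H$-valued $k$-family.

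There is no genuine obstacle here, since all the heavy lifting is already encapsulated in Lemma \ref{lemma_stepup} and in the $\ZFC$/weak-diamond base case behind Lemma \ref{lemma_nontrivialchain}. The only point worth flagging is that the argument uses hypothesis (2) only at the single index $k-1$, but it relies on the bound $\lambda \geq \omega_{\omega+1}$ to guarantee that $\omega_{k-1} < \lambda$ uniformly in $k$, which is what makes it possible to sweep \emph{every} finite $k \geq 2$ in one model.
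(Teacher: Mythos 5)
Your proposal matches the paper's proof essentially verbatim: both establish the $\omega_j$-chain base case by induction via \cite[Lemma 3.6]{velickovic2021non} and hypothesis (1), then make a single application of Lemma \ref{lemma_stepup} with $n = k-1$, $\kappa = \omega_{k-1}$, using hypothesis (2) at index $k-1$. The verification of the hypotheses of Lemma \ref{lemma_stepup} that you spell out is correct.
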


\begin{proof}
Fix a countable abelian group $H$. As above, we know that every $\omega_1$-chain in $({^{\omega}}\omega, 
\leq^*)$ carries a nontrivial coherent $H$-valued $1$-family. Using \cite[Lemma 3.6]{velickovic2021non}, one can prove by induction that, for all $1 \leq k < \omega$, every $\omega_k$-chain carries a nontrivial coherent $H$-valued $k$-family. Finally, fix $2 \leq k < \omega$ and fix a stationary 
$S \subseteq S^\lambda_{\omega_{k-1}}$ such that $\square(\lambda, S) + \wl(S)$ holds. Then 
Lemma \ref{lemma_stepup} implies that every $\lambda$-chain carries a nontrivial coherent
$H$-valued $k$-family.
\end{proof}

These yield the following result, which is clause (2) of Theorem B from the introduction.

\begin{theorem} \label{thm: omega_simul}
  Relative to the consistency of $\ZFC$, it is consistent that $\mathfrak{b} = \mathfrak{d} 
  = \omega_{\omega+2}$ and $\bigwedge_{2 \leq k < \omega} \lim^k \mathbf{A} \neq 0$.
\end{theorem}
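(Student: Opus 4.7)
The plan is to mimic the proof of Theorem \ref{thm: n_simul}, substituting the two lemmas immediately above, which together package all of the hard work. Concretely, I would start by working inside a model $V'$ witnessing the conclusion of Lemma \ref{lemma:long-chain-consistency}. In such a model one has $\mathfrak{b} = \mathfrak{d} = \omega_{\omega+2}$, and from this equality I would produce a $\leq^*$-cofinal chain $\vec{f} = \langle f_\alpha \mid \alpha < \omega_{\omega+2} \rangle$ in $({^\omega}\omega, \leq^*)$ by the standard recursive construction: at each stage $\alpha$ use $\mathfrak{b} = \omega_{\omega+2}$ to dominate $\{f_\beta \mid \beta < \alpha\}$, and at the end use $\mathfrak{d} = \omega_{\omega+2}$ (or just enumerate a cofinal family along the way) to ensure the chain is cofinal.

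Next I would apply Lemma \ref{lemma:long-chain-implication} with $\lambda = \omega_{\omega+2} \geq \omega_{\omega+1}$. Hypotheses (1) and (2) of that lemma are precisely clauses (2) and (3) of Lemma \ref{lemma:long-chain-consistency}, so the lemma applies (taking $H = \bb{Z}$, which is countable) and produces, for every $2 \leq k < \omega$, a nontrivial coherent $\bb{Z}$-valued $k$-family indexed over $\vec{f}$.

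To conclude, I fix $2 \leq k < \omega$ and observe that $\mc{J} := \{I_{f_\alpha} \mid \alpha < \omega_{\omega+2}\}$ is a $\sub^*$-cofinal subset of the directed set $\mc{I} := \{I_f \mid f \in {^\omega}\omega\}$, since $\vec{f}$ is $\leq^*$-cofinal. By Remark \ref{remark: extension} (via Proposition \ref{prop: extension}), the $k$-family over $\mc{J}$ extends to one indexed along all of $\mc{I}$, and by Proposition \ref{prop: cofinal_subset} its nontriviality is preserved. Fact \ref{fact: lim_coh} then yields $\lim^k \mb{A} \neq 0$ for each such $k$, as required.

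The substantive work is already sealed inside Lemmas \ref{lemma:long-chain-consistency} and \ref{lemma:long-chain-implication}; the former via a careful forcing analysis over $L$ (Hechler iteration crossed with stratified Cohen-style posets) together with Jensen's square principles, and the latter via $(\omega-1)$-fold application of the stepping-up Lemma \ref{lemma_stepup}. Consequently, I expect no real obstacle in the present theorem beyond the cardinal-arithmetic bookkeeping used to invoke the two lemmas; the only mildly delicate point is verifying that the chain $\vec{f}$ genuinely exists in $V'$, which rests on the classical fact that $\mathfrak{b} = \mathfrak{d}$ implies the existence of a $\leq^*$-cofinal chain of that length.
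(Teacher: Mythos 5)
Your proposal is correct and reproduces exactly the argument the paper intends: work in the model from Lemma \ref{lemma:long-chain-consistency}, use $\mathfrak{b} = \mathfrak{d} = \omega_{\omega+2}$ to build a $\leq^*$-cofinal $\omega_{\omega+2}$-chain, apply Lemma \ref{lemma:long-chain-implication} with $H = \mathbb{Z}$, and then invoke Proposition \ref{prop: cofinal_subset}, Remark \ref{remark: extension}, and Fact \ref{fact: lim_coh} to lift nontriviality along the chain to nonvanishing of $\lim^k \mathbf{A}$. This is precisely the paper's proof (which simply defers to the argument of Theorem \ref{thm: n_simul}), only spelled out in slightly more detail.
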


\begin{proof}
  This follows immediately from Lemmas \ref{lemma:long-chain-consistency} and 
  \ref{lemma:long-chain-implication}, together with the arguments from the proof of 
  Theorem \ref{thm: n_simul}.
\end{proof}

Notice that Theorems \ref{thm: n_simul} and \ref{thm: omega_simul} conspicuously do not 
mention $\operatorname{lim}^1 \mb{A}$. 
By an argument similar to one due to Kamo \cite{kamo}, if $\lambda \geq \aleph_2$ is a regular 
cardinal and one adds $\lambda$-many Hechler reals to a model of $\ZFC$, then the forcing 
extension satisfies $\operatorname{lim}^1 \mb{A} = 0$. (A special case of this fact was noted in 
\cite{svhdl}.) For completeness, we provide a proof of this fact.

\begin{theorem}
  Suppose that $\lambda \geq \aleph_2$ is a regular cardinal, and let $\bb{H}_\lambda$ 
  be the finite-support iteration of Hechler forcings of length $\lambda$. Then, in 
  $V^{\bb{H}_\lambda}$, we have $\operatorname{lim}^1 \mb{A} = 0$.
\end{theorem}

\begin{proof}
  We think of conditions in $\bb{H}_\lambda$ as being functions whose domains are finite 
  subsets of $\lambda$. For all $p \in \bb{H}_\lambda$ and all $\alpha \in \dom(p)$, 
  $p(\alpha)$ is of the form $(\dot{s}^{p(\alpha)}, \dot{f}^{p(\alpha)})$. 
  By passing to a dense subset of $\bb{H}_\lambda$, we can assume that we only work 
  with conditions $p \in \bb{H}_\lambda$ such that, for all $\alpha \in \dom(p)$, there 
  is $s^{p(\alpha)} \in {^{<\omega}}\omega$ such that $p \restriction \alpha 
  \Vdash_{\bb{H}_\alpha} \dot{s}^{p(\alpha)} = s^{p(\alpha)}$. We will thus write, e.g., 
  $p(\alpha) = (s^{p(\alpha)}, \dot{f}^{p(\alpha)})$, where $s^{p(\alpha)} \in 
  {^{<\omega}}\omega$. For each $\alpha < \lambda$, let $\dot{h}_\alpha$ be the 
  canonical $\bb{H}_\lambda$-name for the Hechler real added by the $\alpha^{\mathrm{th}}$ 
  iterand of $\bb{H}_\lambda$. For all $\gamma < \lambda$, let $\bb{H}_\gamma$ be the 
  initial segment of $\bb{H}_\lambda$ of length $\gamma$.
  
  Suppose for the sake of contradiction that there is $p \in \bb{H}_\lambda$ and an 
  $\bb{H}_\lambda$-name $\dot{\Phi} = \langle \dot{\varphi}_{\dot{f}} \mid \dot{f} \in 
  ({^{\omega}}\omega)^{V^{\bb{H}_\lambda}} \rangle$ such that $p$ forces $\dot{\Phi}$ to 
  be a nontrivial, coherent, $\bb{Z}$-valued $1$-family. For ease of notation, assume that 
  $p = \emptyset$. Assume that each $\dot{\varphi}_{\dot{f}}$ is a nice name; for instance,
  we can take it to be a map from $(\omega \times \omega) \times \bb{Z}$ to the set of 
  antichains of $\bb{H}$.
  
  Note that, for all $\gamma \in S^\lambda_{>\omega}$, $\langle \dot{h}_\alpha \mid 
  \alpha < \gamma \rangle$ is forced to be $<^*$-increasing and cofinal in 
  $({^{\omega}}\omega)^{V^{\bb{H}_\gamma}}$. Moreover, by the chain condition of 
  $\bb{H}_\lambda$, there is a club $D \subseteq \lambda$ such that, for all 
  $\gamma \in D$ and all $\alpha < \gamma$, $\dot{\varphi}_{\dot{h}_\alpha}$ is 
  an $\bb{H}_\gamma$-name. 
  
  Let $G$ be $\bb{H}_\lambda$-generic over $V$. For all $\gamma < \lambda$, let 
  $G_{\gamma}$ be the $\bb{H}_\gamma$-generic filter induced by $G$. We denote 
  the interpretations of names in the forcing extension by removing dots, e.g., 
  $\varphi_{h_\alpha} = (\dot{\varphi}_{\dot{h}_\alpha})_G$. Note that, for all 
  $\gamma \in D$, we have $\langle \varphi_{h_\alpha} \mid \alpha < \gamma \rangle 
  \in V[G_\gamma]$.
  
  \begin{claim}
    There is $\gamma \in D \cap S^\lambda_{> \omega}$ such that 
    $\langle \varphi_{h_\alpha} \mid \alpha < \gamma \rangle$ is nontrivial 
    in $V[G_\gamma]$.
  \end{claim}
  
  \begin{proof}
    Suppose not. For each $\gamma \in D \cap S^\lambda_{> \omega}$, fix 
    $\psi^\gamma \colon \omega \times \omega \ra \bb{Z}$ in $V[G_\gamma]$ 
    trivializing $\langle \varphi_{h_\alpha} \mid \alpha < \gamma \rangle$. 
    By the chain condition of $\bb{H}_\gamma$, there is in fact 
    $\beta_\gamma < \gamma$ such that $\psi^\gamma \in V[G_{\beta_\gamma}]$. 
    Fix a stationary $S \subseteq D \cap S^\lambda_{>\omega}$ and a $\beta < \lambda$ 
    such that $\beta_\gamma = \beta$ for all $\gamma \in S$.
    
    \begin{subclaim}
      For all $\gamma < \gamma'$, both in $S$, there is $k < \omega$ such that 
      $\psi^\gamma \restriction ([k,\omega) \times \omega) = \psi^{\gamma'} \restriction 
      ([k,\omega) \times \omega)$.
    \end{subclaim}
    
    \begin{proof}
      Suppose for the sake of contradiction that $\gamma < \gamma'$ formed a counterexample. 
      Then, since $\psi^\gamma$ and $\psi^{\gamma'}$ are both in 
      $V[G_\beta]$, we can find $f \in ({^{\omega}}\omega)^{V[G_\beta]}$ such that 
      $\psi^\gamma \restriction I_f \neq^* \psi^{\gamma'} \restriction I_f$. 
      Since $h_\beta$ is Hechler-generic over $V[G_\beta]$, we have $f <^* h_\beta$, 
      and hence $\psi^\gamma \restriction I_{h_\beta} \neq^* \psi^{\gamma'} \restriction 
      I_{h_\beta}$. But this contradicts the fact that $\beta < \gamma < \gamma'$, and hence 
      $\psi^\gamma \restriction I_{h_\beta} =^* \varphi_{h_\beta} =^* \psi^{\gamma'} \restriction 
      I_{h_\beta}$.
    \end{proof}
    
    Fix an arbitrary $\gamma \in S$, and an arbitrary $\alpha < \lambda$. Let $\gamma' = 
    \min(S \setminus (\alpha + 1))$. Then $\psi^{\gamma'} \restriction I_{h_\alpha} =^* 
    \varphi_{h_\alpha}$ and, by the above subclaim, we have $\psi^\gamma \restriction I_{h_\alpha} 
    = \psi^{\gamma'} \restriction I_{h_\alpha}$. It follows that $\psi^\gamma$ trivializes 
    $\langle \varphi_{h_\alpha} \mid \alpha < \lambda \rangle$ in $V[G]$, contradicting the 
    fact that $\langle h_\alpha \mid \alpha < \lambda \rangle$ is $<^*$-cofinal in 
    $({^{\omega}}\omega)^{V[G]}$ and $\dot{\Phi}$ was forced to be nontrivial.
  \end{proof}
  
  Now fix $\gamma \in D \cap S^\lambda_{>\omega}$ as given in the claim, and work in 
  $V[G_\gamma]$. Let $\bb{H}_{[\gamma,\lambda)}$ denote the quotient forcing 
  $\bb{H}_\lambda / G_\gamma$. We can think of conditions in $\bb{H}_{[\gamma,\lambda)}$ as 
  being functions whose domains are finite subsets of the interval $[\gamma, \lambda)$ 
  and, for each $p \in \bb{H}_{[\gamma,\lambda)}$ and each $\delta \in \dom(p)$, 
  $p(\delta)$ is of the form $(s^{p(\delta)}, \dot{f}^{p(\delta)})$. 
  
  For each $\alpha < \gamma$, fix a condition $p_\alpha \in \bb{H}_{[\gamma,\lambda)}$ 
  and a $k_\alpha < \omega$ such that $p_\alpha$ forces the following:
  \begin{itemize}
    \item $h_\alpha <_{k_{\alpha}} \dot{h}_\gamma$ (i.e., $h_\alpha(j) < h_\gamma(j)$ for all 
    $j \in [k_\alpha,\omega)$;
    \item $\varphi_{h_\alpha} =_{k_\alpha} \varphi_{h_\gamma}$, i.e., for all 
    $(j,m) \in I_{h_\alpha}$ with $j \geq k_\alpha$, we have $\varphi_{h_\alpha}(j,m) = 
    \varphi_{h_\gamma}(j,m)$.
  \end{itemize}
  Now, using the fact that $\cf(\alpha) > \omega$, and hence $\bb{H}_{[\gamma,\lambda)}$ is 
  $\cf(\alpha)$-Knaster, we may fix an unbounded $A \subseteq \gamma$ and a $k < \omega$ such that
  \begin{itemize}
    \item the conditions $\{p_\alpha \mid \alpha \in A\}$ are pairwise compatible; and
    \item for all $\alpha \in A$, we have $k_\alpha = k$.
  \end{itemize}
  
  \begin{claim}
    For all $\alpha < \alpha'$, both in $A$, we have $\varphi_{h_\alpha} =_k 
    \varphi_{h_{\alpha'}}$.
  \end{claim}
  
  \begin{proof}
    Fix $(j,m) \in I_{h_\alpha} \cap I_{h_{\alpha'}}$ with $j \geq k$.
    Let $p$ be a common extension of $p_\alpha$ and $p_{\alpha'}$. Then $p$ forces:
    \begin{itemize}
      \item $h_\alpha, h_{\alpha'} <_k \dot{h}_\gamma$;
      \item $\dot{\varphi}_{\dot{h}_\gamma} =_k \varphi_{h_\alpha}$ and 
      $\dot{\varphi}_{\dot{h}_\gamma} =_k \varphi_{h_{\alpha'}}$.
    \end{itemize}
    In particular, $p$ forces that $\varphi_{h_\alpha}(j,m) = \dot{\varphi}_{\dot{h}_\gamma}(j,m) = 
    \varphi_{h_{\alpha'}}(j,m)$. It follows that $\varphi_{h_\alpha} =_k \varphi_{h_{\alpha'}}$, 
    as desired.
  \end{proof}
  Let $\psi_0 = \bigcup \{\varphi_{h_\alpha} \restriction (I_{h_\alpha} \cap 
  ([k,\omega) \times \omega)) \mid \alpha \in A \}$. By the previous claim, $\psi_0$ is a 
  function, and its domain includes $I_{h_\alpha}$ mod finite for all $\alpha < \gamma$. 
  Arbitrarily extend $\psi_0$ to a function $\psi$ defined on all of $\omega \times \omega$. 
  By construction, we have $\psi \restriction I_{h_\alpha} =^* \varphi_{h_\alpha}$ for all 
  $\alpha \in A$; since $A$ is cofinal in $\gamma$, it follows that $\psi$ trivializes 
  $\langle \varphi_{h_\alpha} \mid \alpha < \gamma \rangle$, contradicting our choice of 
  $\gamma$ and finishing the proof of the theorem..
\end{proof}

By this theorem, it follows that the 
models we construct to witness Theorems \ref{thm: n_simul} and \ref{thm: omega_simul} will 
satisfy $\lim^1 \mb{A} = 0$. The following question therefore remains open.

\begin{question}
  Is $\bigwedge_{1 \leq k < \omega} \lim^k \mb{A} \neq 0$ consistent with $\ZFC$?
\end{question}

\bibliographystyle{plain}
\bibliography{bib}

\end{document}